\numberwithin{equation}{section}
\numberwithin{figure}{section}
\numberwithin{table}{section}
\theoremstyle{plain}
\newtheorem{thm}{\protect\theoremname}[section]
\theoremstyle{definition}
\newtheorem{defn}[thm]{\protect\definitionname}
\theoremstyle{definition}
\newtheorem{rem}[thm]{\protect\remarkname}
\theoremstyle{plain}
\newtheorem{prop}[thm]{\protect\propositionname}
\theoremstyle{plain}
\newtheorem{cor}[thm]{\protect\corollaryname}
\theoremstyle{plain}
\newtheorem{lem}[thm]{\protect\lemmaname}
\theoremstyle{definition}
\newtheorem{example}[thm]{\protect\examplename}
\theoremstyle{plain}
\newtheorem{lyxalgorithm}[thm]{\protect\algorithmname}
\def\mystrut(#1,#2){\vrule height #1pt depth #2pt width 0pt}
\newcommand{\culine}{\bgroup\markoverwith
{\textcolor{red}{\rule[-0.5ex]{2pt}{0.5pt}}}\ULon}
\def\cdashuline{\bgroup
\UL@setULdepth
\markoverwith{\textcolor{red}{\kern.13em
\vtop{\kern\ULdepth \hrule width .3em}%
\kern.13em}}\ULon}
\def\cuuline{\bgroup \UL@setULdepth
\markoverwith{\textcolor{red}{\lower\ULdepth\hbox
{\kern-.03em\vbox{\hrule width.2em\kern1.2\p@\hrule}\kern-.03em}}}
\ULon}
\setlist[itemize,1]{label={\fontfamily{cmr}\fontencoding{T1}\selectfont\textbullet}}
\renewenvironment{proof}[1][\proofname] {\par\pushQED{\qed}\normalfont\topsep6\p@\@plus6\p@\relax\trivlist\item[\hskip\labelsep\bfseries#1\@addpunct{.}]\ignorespaces}{\popQED\endtrivlist\@endpefalse}
\providecommand{\definitionname}{Definition}
\providecommand{\examplename}{Example}
\providecommand{\lemmaname}{Lemma}
\providecommand{\propositionname}{Proposition}
\providecommand{\remarkname}{Remark}
\providecommand{\theoremname}{Theorem}
\providecommand{\definitionname}{Definition}
\providecommand{\examplename}{Example}
\providecommand{\lemmaname}{Lemma}
\providecommand{\propositionname}{Proposition}
\providecommand{\remarkname}{Remark}
\providecommand{\theoremname}{Theorem}
\providecommand{\definitionname}{Definition}
\providecommand{\examplename}{Example}
\providecommand{\lemmaname}{Lemma}
\providecommand{\propositionname}{Proposition}
\providecommand{\remarkname}{Remark}
\providecommand{\theoremname}{Theorem}
\providecommand{\definitionname}{Definition}
\providecommand{\examplename}{Example}
\providecommand{\lemmaname}{Lemma}
\providecommand{\propositionname}{Proposition}
\providecommand{\remarkname}{Remark}
\providecommand{\theoremname}{Theorem}
\providecommand{\corollaryname}{Corollary}
\providecommand{\definitionname}{Definition}
\providecommand{\examplename}{Example}
\providecommand{\lemmaname}{Lemma}
\providecommand{\propositionname}{Proposition}
\providecommand{\remarkname}{Remark}
\providecommand{\theoremname}{Theorem}
\providecommand{\lemmaname}{Lemma}
\providecommand{\propositionname}{Proposition}
\providecommand{\remarkname}{Remark}
\providecommand{\theoremname}{Theorem}
\providecommand{\lemmaname}{Lemma}
\providecommand{\propositionname}{Proposition}
\providecommand{\remarkname}{Remark}
\providecommand{\theoremname}{Theorem}
\providecommand{\lemmaname}{Lemma}
\providecommand{\propositionname}{Proposition}
\providecommand{\remarkname}{Remark}
\providecommand{\theoremname}{Theorem}
\providecommand{\lemmaname}{Lemma}
\providecommand{\remarkname}{Remark}
\providecommand{\theoremname}{Theorem}
\providecommand{\lemmaname}{Lemma}
\providecommand{\remarkname}{Remark}
\providecommand{\theoremname}{Theorem}
\providecommand{\lemmaname}{Lemma}
\providecommand{\remarkname}{Remark}
\providecommand{\theoremname}{Theorem}
\providecommand{\corollaryname}{Corollary}
\providecommand{\definitionname}{Definition}
\providecommand{\examplename}{Example}
\providecommand{\lemmaname}{Lemma}
\providecommand{\remarkname}{Remark}
\providecommand{\theoremname}{Theorem}
\providecommand{\corollaryname}{Corollary}
\providecommand{\definitionname}{Definition}
\providecommand{\examplename}{Example}
\providecommand{\lemmaname}{Lemma}
\providecommand{\remarkname}{Remark}
\providecommand{\theoremname}{Theorem}
\renewcommand\theenumi{(\roman{enumi})}
\DeclareMathOperator{\im}{Im}
\providecommand{\algorithmname}{Algorithm}
\providecommand{\corollaryname}{Corollary}
\providecommand{\definitionname}{Definition}
\providecommand{\examplename}{Example}
\providecommand{\lemmaname}{Lemma}
\providecommand{\propositionname}{Proposition}
\providecommand{\remarkname}{Remark}
\providecommand{\theoremname}{Theorem}
\begin{document}
\title{\textbf{General Perturbation Resilient Dynamic String-Averaging for
Inconsistent Problems with Superiorization}}
\author{Kay Barshad and Yair Censor\\
Department of Mathematics, University of Haifa, Mt. Carmel, \\
Haifa 3498838, Israel \\
\Letter ~ \href{mailto:kaybarshad@gmail.com}{kaybarshad@gmail.com};
\Letter ~ \href{mailto:yair@math.haifa.ac.il}{yair@math.haifa.ac.il}}
\date{December 23, 2024. Revised: June 15, 2025.}
\maketitle
\begin{abstract}
\noindent In this paper we introduce a General Dynamic String-Averaging
(GDSA) iterative scheme and investigate its convergence properties
in the inconsistent case, that is, when the input operators don't
have a common fixed point. The Dynamic String-Averaging Projection
(DSAP) algorithm itself was introduced in an 2013 paper, where its
strong convergence and bounded perturbation resilience were studied
in the consistent case (that is, when the sets under consideration
had a nonempty intersection). Results involving combination of the
DSAP method with superiorization, were presented in 2015. The proof
of the weak convergence of our GDSA method is based on the notion
of ``strong coherence'' of sequences of operators that was introduced
in 2019. This is an improvement of the property of ``coherence''
of sequences of operators introduced in 2001 by Bauschke and Combetts.
Strong coherence provides a more convenient sufficient convergence
condition for methods that employ infinite sequences of operators
and it turns out to be a useful general tool when applied to proving
the convergence of many iterative methods. In this paper we combine
the ideas of both dynamic string-averaging and strong coherence, in
order to analyze our GDSA method for a general class of operators
and its bounded perturbation resilience in the inconsistent case with
weak and strong convergence. We then discuss an application of the
GDSA method to the Superiorization Methodology, developing results
on the behavior of its superiorized version.\\

\noindent Communicated by Fabian Flores-Bazàn\\

\noindent\textbf{2010 Mathematics Subject Classification: }46N10,
46N40, 47H09,47H10, 47J25, 47N10, 65F10, 65J99.\\

\noindent\textbf{Keywords and phrases:} Approximately shrinking operator,
bounded perturbation resilience, bounded regularity, coherence, coherent
sequence of operators, common fixed point problem, convex feasibility
problem, dynamic string-averaging, Fej{\'e}r monotonicity, metric
projection, nonexpansive operator, strong coherence, strongly coherent
sequence of operators, superiorization, weak convergence, weak regularity. 
\end{abstract}

\section{Introduction}

A strongly convergent Dynamic String-Averaging Projection (DSAP) algorithm,
based on convex combinations and compositions of operators, was introduced,
along with its bounded perturbation resilience, by Censor and Zaslavski
in \cite{DSAP} for a family of nonempty, closed and convex sets $\left\{ C_{i}\right\} _{i=1}^{m}$,
where the considered operators were given by a sequence of metric
projections $\left\{ P_{C_{i}}\right\} _{i=1}^{m}$ in the consistent
case, that is, when $\cap_{i=1}^{m}C_{i}\not=\emptyset$. A superiorized
version of this algorithm appeared in \cite{CZ_sup}, where it was
proved that any sequence generated by it either converges to a constrained
minimum point of the objective function employed in the superiorization
process, or that it is strictly Fej{\'e}r monotone with respect to
a subset of the solution set of the constrained minimization problem.
String-averaging projection (SAP) methods form a general algorithmic
framework introduced in \cite{CEH2001}. Subsequently, they were developed
in a variety of situations such as for convex feasibility with infinitely
many sets \cite{Kong2019}, for incremental stochastic subgradient
algorithms \cite{costa} and for proton computed tomography image
reconstruction \cite{Barsik}, to name but a few. See also \cite{Bargetz2018},
where perturbation resilience of such methods was further studied
and \cite{Bargetz_LC}, where linear convergence rates for a certain
class of extrapolated fixed point algorithms which are based on dynamic
string-averaging methods were established.

In the present paper we consider more general ideas, focusing on more
general algorithmic structures which are applied to the inconsistent
case (that is, when the input operators don't have a common fixed
point), and when the objective function, involved in the superiorization,
is only required to be convex and continuous. For a positive integer
$m$ and a given family of nonexpansive operators $\left\{ U_{i}\right\} _{i=1}^{m}$,
$U_{i}:\mathcal{H}\rightarrow\mathcal{H}$ for each $i=1,2,\dots,m$,
without a common fixed point, we construct a General Dynamic String-Averaging
(GDSA) algorithm (Algorithm \ref{alg:GDSA} below) based on convex
combinations, compositions and relaxations of the operators of the
aforementioned family. In Theorem \ref{main_res} below we investigate
weak and strong convergence properties of our GDSA algorithm and its
bounded perturbation resilience. To this end we introduce a ``$\limsup$-admissible
sequence of operators'' (Definition \ref{def:admis} below) and show
that if, after a certain GDSA procedure, the sequence of output operators
is $\limsup$-admissible, then the algorithm converges weakly (and
under additional assumptions -- strongly) to a point in a certain
(assumed nonempty) set, even though the given input operators don't
have a common fixed point.

We further apply our GDSA algorithm to the Superiorization Methodology
(SM). The SM is not aiming to solve the constrained minimization problem
under consideration, but to find a feasible point for the original
feasibility-seeking problem which is ``superior'', i.e., has smaller
or equal objective function value than that of a point returned by
the feasibility-seeking only algorithm that is employed by the SM.
More details about the SM appear below in Subsection 5.1.

The weak convergence of our GDSA algorithm and its superiorized version
is based, inter alia, on the theory of coherence, which was introduced
by Bauschke and Combetts in \cite{BC} and further developed by Barshad,
Reich and Zalas in \cite{BRZ(R)}, see also \cite{Bar_thes} and \cite{GMSA}.

All the results below, concerning our algorithmic schemes, remain
valid in the consistent case as well. However, since it is generally
desirable to assume certain properties of the input operators, which
is possible in this case as it was shown in \cite{MSA}, such results
are of limited interest in the consistent case due to the assumptions
placed on the output operators in our GDSA procedure.

It is worth noting the significance of the inconsistent case at this
point. The convex feasibility problem (CFP) is to find a feasible
point in the intersection of a family of convex and closed sets. If
the intersection is empty, then the CFP is inconsistent and a feasible
point does not exist. However, algorithmic research on inconsistent
CFPs does exist and is mainly focused on two directions. One is oriented
toward defining other solution concepts that will apply, such as proximity
function minimization, wherein a proximity function measures, in some
way, the total violation of all constraints (see, for instance, Example
\ref{SimPro} below). The second direction investigates the behavior
of algorithms that are designed to solve a consistent CFP when applied
to inconsistent problems. This direction is fueled by situations wherein
one lacks a priori information about the consistency or inconsistency
of the CFP or does not wish to invest computational resources to get
hold of such knowledge prior to running his algorithm. A telegraphical
review of some recent works on inconsistent CFPs appears in \cite{CensorZaknoon2018}.

The rest of the paper is organized as follows. In Section \ref{sec2}
we provide some background which is needed to establish our results.
In Section \ref{sec3} we introduce the general bounded regularity
with its properties and discuss the well-known notion of approximate
shrinking of operators. In Section \ref{sec4} we present the properties
of our GDSA algorithm. Notations and initial tools appear in Subsection
\ref{subsec:Notions,-notations-and} and lemmata leading to the main
result are in Subsection \ref{subsec:The-convergent-and}. In Section
\ref{sec5}, we present applications of this algorithm to the Superiorization
Methodology, followed by a brief summary with conclusions in Section
\ref{sec:Conclusion}.

\section{\label{sec2}Preliminaries }

Throughout this paper $\mathbb{N}$ denotes the set of natural numbers
(starting from $0$), and for any two integers $m$ and $n$, with
$m\le n$, we denote by $\left\{ m,m+1,\dots,n\right\} $ the set
of all integers between $m$ and $n$. The real line is denoted by
$\mathbb{R}$. For a set $A$, we denote by $\left|A\right|$ the
cardinality of $A$. For a real Hilbert space $\mathcal{H}$, we use
the following notations:
\begin{itemize}
\item $\langle\cdot,\cdot\rangle$ denotes the inner product on $\mathcal{H}.$
\item $\Vert\cdot\Vert$ denotes the norm on $\mathcal{H}$ induced by $\langle\cdot,\cdot\rangle.$
\item $Id$ denotes the identity operator on $\mathcal{H}$.
\item $\mathrm{Fix}T$ denotes the, possibly empty, set $\mathrm{Fix}T:=\{x\in\mathcal{H}\mid T(x)=x\}$
of fixed points of an operator $T:\mathcal{H}\rightarrow\mathcal{H}$.
\item For a nonempty and convex subset $C$ of $\mathcal{H}$, we denote
by $P_{C}$ the (unique) metric projection onto $C$, the existence
of which is guaranteed if $C$ is, in addition, closed.
\item The expressions $x^{k}\rightharpoonup x$ and $x^{k}\rightarrow x$
denote, the weak and strong convergence, respectively, to $x$ of
a sequence $\left\{ x^{k}\right\} _{k=0}^{\infty}$ in $\mathscr{\left(\mathcal{H}\mathscr{,\Vert\cdot\Vert}\right)}$when
$k\rightarrow\infty$, while $\mathfrak{W}\left(\left\{ \mathit{x^{k}}\right\} _{k=0}^{\infty}\right)$
denotes the set of weak cluster points of $\left\{ x^{k}\right\} _{k=0}^{\infty}$.
\item For a convex function $\phi:\mathcal{H}\rightarrow\mathbb{R}$ and
a point $x\in\mathcal{H}$, we denote by $\partial\phi\left(x\right)$
the subdifferential of $\phi$ at $x$, that is,
\begin{equation}
\partial\phi\left(x\right):=\left\{ g\in\mathcal{H}\,|\,\left\langle g,y-x\right\rangle \le f\left(y\right)-f\left(x\right)\,\,\mathrm{for}\,\,\mathrm{all}\,\,y\in\mathcal{H}\right\} .\label{eq:-24}
\end{equation}
\item For a function $f:\mathcal{H}\rightarrow\mathbb{R}$ and a subset
$A$ of $\mathcal{H}$, we denote by $\underset{x\in A}{\mathrm{Argmin}}f\left(x\right)$
the set of minimizers of $f$ on the set $A$.
\item $B\left(x,r\right)$ denotes the open ball centered at $x\in\mathcal{H}$
of radius $r>0$.
\item For a nonempty subset $C$ of $\mathcal{H}$ and $x\in\mathcal{H}$,
we denote by $d\left(x,C\right)$ the distance from $x$ to $C$,
that is, $d\left(x,C\right):=\inf_{y\in C}\left\Vert x-y\right\Vert $.
\end{itemize}
We recall the following types of algorithmic operators. For more information
on such operators, see, for example, \cite{C_book}.
\begin{defn}
Let $T:\mathcal{H}\rightarrow\mathscr{\mathcal{H}}$ be an operator
and let $\lambda\in\left[0,2\right]$. The operator $T_{\lambda}:\mathcal{H}\rightarrow\mathscr{\mathcal{H}}$
defined by $T_{\lambda}:=\left(1-\lambda\right)\mathrm{Id}+\lambda T$
is called a $\lambda$-$relaxation$ of the operator $T$. The operator
$T_{2}$ is called a \textit{reflection} of the operator $T$.
\end{defn}
\begin{defn}
An operator $T:\mathcal{H}\rightarrow\mathcal{H}$ is said to be \textit{nonexpansive}
(NE) if 
\[
\left(\forall x,y\in\mathcal{H}\right)\,\,\left\Vert T\left(x\right)-T\left(y\right)\right\Vert \leq\left\Vert x-y\right\Vert .
\]
For $\lambda\in\left[0,2\right]$, an operator $T:\mathcal{H}\rightarrow\mathcal{H}$
is said to be $\lambda$-\textit{relaxed nonexpansive} if $T$ is
a $\lambda$-relaxation of a nonexpansive operator $U$, that is,
$T=U_{\lambda}$.
\end{defn}
\begin{rem}
\label{conv_com_comp}Clearly, convex combinations and compositions
of nonexpansive operators are also nonexpansive. Moreover, for each
$\lambda\in\left[0,1\right]$, the $\lambda$-relaxation of a nonexpansive
operator is also nonexpansive.
\end{rem}
A central role in our analysis is played by cutter operators that
we define next. For every ordered pair $\left(x,y\right)\in\mathscr{\mathcal{H}^{\mathit{2}}}$,
we define the closed and convex set $H\left(x,y\right)$ by

\[
H\left(x,y\right):=\left\{ u\in\mathcal{H}\,\vert\,\langle u-y,x-y\rangle\le0\right\} .
\]

The following class of cutters was introduced by Bauschke and Combettes
in \cite{BC}, with a different terminology and named there the ``class
$\mathfrak{\mathfrak{T}}$''. The name ``cutter'' was proposed
in \cite{cegielski-censor-2011}; other names are used in the literature
for these operators, such as, e.g., ``firmly quasi-nonexpansive''
(see, for example, Definition 4.1 and Proposition 4.2 in \cite{BC_book}),
where various properties and examples of cutters can be found.
\begin{defn}
An operator $T:\mathscr{\mathcal{H}}\rightarrow\mathscr{\mathcal{H}}$
is called a \textit{cutter} if it satisfies the condition 
\[
\textnormal{Fix}T\subseteq H\left(x,T\left(x\right)\right),\text{ }\forall x,y\in X,
\]
or, equivalently, $\left\langle z-T(x),x-T(x)\right\rangle \leq0$
for each $z\in\mathrm{Fix}T$ and $x\in X$. For $\lambda\in\left[0,2\right]$,
an operator $T:\mathcal{H}\rightarrow\mathcal{H}$ is a $\lambda$-\textit{relaxed
}cutter if $T$ is a $\lambda$-relaxation of a cutter $U$, that
is, $T=U_{\lambda}=\left(1-\lambda\right)\mathrm{Id}+\lambda U$.
\end{defn}
\begin{prop}
Let $T$ be a cutter. Then $\mathrm{Fix}T=\cap_{x\in\mathcal{H}}H\left(x,T\left(x\right)\right)$
and hence $\mathrm{Fix}T$ is a closed and convex subset of $\mathcal{H}$,
as an intersection of half-spaces.
\end{prop}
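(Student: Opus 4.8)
The plan is to establish the set equality by double inclusion and then read off the closedness and convexity from the structure of the right-hand side. For the inclusion $\mathrm{Fix}\,T\subseteq\bigcap_{x\in\mathcal{H}}H\left(x,T(x)\right)$ there is essentially nothing to do beyond unpacking the definition: a cutter satisfies $\mathrm{Fix}\,T\subseteq H\left(x,T(x)\right)$ for \emph{every} $x\in\mathcal{H}$, and a set contained in each member of a family is contained in their intersection. So I would state this inclusion as an immediate consequence of the defining property.

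For the reverse inclusion $\bigcap_{x\in\mathcal{H}}H\left(x,T(x)\right)\subseteq\mathrm{Fix}\,T$, the key idea is to specialize the quantifier over $x$. Let $z\in\bigcap_{x\in\mathcal{H}}H\left(x,T(x)\right)$; then in particular the choice $x=z$ yields $z\in H\left(z,T(z)\right)$, which by definition of $H$ means $\langle z-T(z),\,z-T(z)\rangle\le0$, that is, $\left\Vert z-T(z)\right\Vert^{2}\le0$. Since the norm is nonnegative, this forces $T(z)=z$, hence $z\in\mathrm{Fix}\,T$. This single substitution $x=z$ is the crux of the whole argument: it is what converts membership in all of the half-spaces simultaneously into the fixed-point equation.

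Finally, for the topological and convexity conclusion I would observe that each set $H\left(x,T(x)\right)$ is a sublevel set $\left\{u\in\mathcal{H}\mid\langle u-T(x),\,x-T(x)\rangle\le0\right\}$ of a continuous affine functional of $u$; when $x\neq T(x)$ this is a closed half-space, and when $x=T(x)$ it is all of $\mathcal{H}$. In either case $H\left(x,T(x)\right)$ is closed and convex. Since both closedness and convexity are preserved under arbitrary intersections, the set $\mathrm{Fix}\,T=\bigcap_{x\in\mathcal{H}}H\left(x,T(x)\right)$ inherits both properties, completing the proof.

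I do not anticipate any genuine obstacle: the argument is short, and the only place requiring a moment's attention is the specialization $x=z$ in the reverse inclusion, which is the step that does all the work.
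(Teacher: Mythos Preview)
Your argument is correct and complete: the forward inclusion is the definition of a cutter, the reverse inclusion follows from the specialization $x=z$ exactly as you describe, and the closedness/convexity conclusion is immediate since each $H(x,T(x))$ is a closed half-space (or all of $\mathcal{H}$). The paper itself does not give a proof at all---it simply cites Proposition~2.6 in \cite{BC}---so your self-contained argument is in fact more informative than what appears in the paper, while being exactly the standard proof one would expect behind that citation.
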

\begin{proof}
See Proposition 2.6 in \cite{BC}.
\end{proof}
\begin{rem}
\label{Rem_cut}It is worthwhile to caution the reader about some
ambiguity in the literature regarding the term cutter. Definition
9.2 of \cite{cegielski-censor-2011} is the first original definition
of cutters and they are defined there without any condition on the
nonemptiness of the fixed points set of the operators. Cutters are,
thus, just another name for the members of the original ``class $\mathfrak{\mathfrak{T}}$''
of Bauschke and Combettes in \cite{BC} which are also defined there
without any condition on the nonemptiness of the fixed points set
of the operators. However, in Definition 2.1.30 in \cite{C_book}
the condition that the fixed points set of the cutter operators should
be nonempty was included in the definition. This created some ambiguity
due to the fact that some later publications either include or do
not include the condition that the fixed points set of the cutter
operators should be nonempty.

We adhere to the original definition (Definition 9.2 of \cite{cegielski-censor-2011})
and when we need the fixed points set of the cutter to be nonempty
we explicitly say so.
\end{rem}
\begin{rem}
\label{Relaxation has the same Fix} Clearly, $\mathrm{Fix}T=\mathrm{Fix}T_{\lambda}$
for every operator $T:\mathcal{H}\rightarrow\mathcal{H}$ and every
$\lambda\in\left(0,2\right]$. Moreover, if $T$ is a cutter, then
for every $\lambda\in\left[0,1\right],$ $T_{\lambda}$ is also a
cutter since $H\left(x,Tx\right)\subset H\left(x,T_{\lambda}\left(x\right)\right)$
for each $x\in\mathcal{H}$ and each $\lambda\in\left[0,1\right]$.
\end{rem}
\begin{defn}
We say that an operator $T:\mathscr{\mathcal{H}}\rightarrow\mathcal{H}$
is:
\begin{enumerate}
\item \textit{Firmly nonexpansive }(FNE) if
\[
\left(\forall x,y\in\mathcal{H}\right)\left\langle T\left(x\right)-T\left(y\right),x-y\right\rangle \ge\left\Vert T\left(x\right)-T\left(y\right)\right\Vert ^{2}.
\]
\item $\rho$-\textit{firmly nonexpansive} ($\rho$-FNE), where $\rho\ge0$
is a real number,\textit{ }if 
\[
\left(\forall x,y\in\mathcal{H}\right)\left\Vert T\left(x\right)-T\left(y\right)\right\Vert ^{2}\le\left\Vert x-y\right\Vert ^{2}-\rho\left\Vert \left(x-T\left(x\right)\right)-\left(y-T\left(y\right)\right)\right\Vert ^{2}.
\]
\end{enumerate}
\end{defn}
\begin{defn}
For $\lambda\in\left[0,2\right]$, an operator $T:\mathcal{H}\rightarrow\mathcal{H}$
is called $\lambda$-\textit{relaxed firmly nonexpansive} if $T$
is a $\lambda$-relaxation of a firmly nonexpansive operator $U$,
that is, $T=U_{\lambda}=\left(1-\lambda\right)\mathrm{Id}+\lambda U$.
\end{defn}
\begin{thm}
\label{thm:2.2.5}If $T:\mathscr{\mathcal{H}}\rightarrow\mathcal{H}$
is firmly nonexpansive, then $T$ is a nonexpansive cutter.
\end{thm}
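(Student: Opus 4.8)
The plan is to verify the two asserted properties separately, each as a short consequence of the defining firm nonexpansiveness inequality combined with the Cauchy--Schwarz inequality.

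First I would show that $T$ is nonexpansive. Starting from the defining inequality $\langle T(x)-T(y),x-y\rangle\ge\Vert T(x)-T(y)\Vert^{2}$ and combining it with Cauchy--Schwarz, namely $\langle T(x)-T(y),x-y\rangle\le\Vert T(x)-T(y)\Vert\,\Vert x-y\Vert$, I obtain $\Vert T(x)-T(y)\Vert^{2}\le\Vert T(x)-T(y)\Vert\,\Vert x-y\Vert$. When $T(x)\ne T(y)$ I divide by $\Vert T(x)-T(y)\Vert$ to conclude $\Vert T(x)-T(y)\Vert\le\Vert x-y\Vert$; the case $T(x)=T(y)$ is trivial. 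This establishes nonexpansiveness for all $x,y\in\mathcal{H}$.

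Next I would verify the cutter condition in the equivalent form $\langle z-T(x),x-T(x)\rangle\le 0$ for every $z\in\mathrm{Fix}\,T$ and every $x\in\mathcal{H}$. Fix such a $z$, so that $T(z)=z$, and specialize the firm nonexpansiveness inequality to $y=z$. This yields $\langle T(x)-z,x-z\rangle\ge\Vert T(x)-z\Vert^{2}$. Writing the right-hand side as $\langle T(x)-z,T(x)-z\rangle$ and subtracting it from both sides, I get $\langle T(x)-z,(x-z)-(T(x)-z)\rangle\ge 0$, that is $\langle T(x)-z,x-T(x)\rangle\ge 0$, which is exactly $\langle z-T(x),x-T(x)\rangle\le 0$. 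Hence $z\in H(x,T(x))$ for every $x\in\mathcal{H}$, so that $\mathrm{Fix}\,T\subseteq H(x,T(x))$, and $T$ is a cutter.

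There is no genuine obstacle here, as both parts reduce to elementary algebra. The only point requiring a little care is the rearrangement in the cutter step: after moving the squared-norm term to the left one must recognize the telescoping $(x-z)-(T(x)-z)=x-T(x)$, and the key specialization that exposes the fixed-point structure is the choice $y=z$ rather than keeping $y$ general.
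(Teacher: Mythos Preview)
Your argument is correct on both counts: the Cauchy--Schwarz step cleanly gives nonexpansiveness, and specializing $y=z\in\mathrm{Fix}\,T$ in the firm nonexpansiveness inequality immediately yields the cutter inequality after the rearrangement you describe.

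As for comparison, the paper does not actually prove this statement in-line; it simply cites Theorems~2.2.4 and~2.2.5 of Cegielski's book \cite{C_book}. Those results use essentially the same elementary manipulations you carry out, so your proof is in the same spirit but has the merit of being self-contained rather than deferred to an external reference.
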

\begin{proof}
See Theorem 2.2.4 and Theorem 2.2.5 in \cite{C_book}.
\end{proof}
\begin{thm}
\label{FNE-equiv.cond}Let $T:\mathcal{H}\rightarrow\mathcal{H}$
be an operator. The following conditions are equivalent:
\begin{enumerate}
\item $T$ is firmly nonexpansive.
\item $T_{\lambda}$ is nonexpansive for each $\lambda\in\left[0,2\right]$.
\item There exists a nonexpansive operator $N:\mathcal{H\rightarrow\mathcal{H}}$
such that $T=2^{-1}\left(Id+N\right)$.
\end{enumerate}
\end{thm}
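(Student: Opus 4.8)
The plan is to prove the cycle of implications (1)$\Rightarrow$(3)$\Rightarrow$(2)$\Rightarrow$(1), with the whole argument resting on a single algebraic identity applied to the reflection $T_{2}=2T-\mathrm{Id}$. Throughout I would fix $x,y\in\mathcal{H}$ and abbreviate $a:=x-y$ and $b:=T(x)-T(y)$, so that the defining inequality for firm nonexpansiveness reads $\|b\|^{2}\le\langle a,b\rangle$, while nonexpansiveness of an operator $S$ amounts to $\|S(x)-S(y)\|\le\|a\|$ for all $x,y$.

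The key computation is to expand the reflection difference. Since $T_{2}(x)-T_{2}(y)=2b-a$, one has $\|T_{2}(x)-T_{2}(y)\|^{2}=4\|b\|^{2}-4\langle a,b\rangle+\|a\|^{2}$, so that $\|T_{2}(x)-T_{2}(y)\|^{2}\le\|a\|^{2}$ holds for every $x,y$ if and only if $\|b\|^{2}\le\langle a,b\rangle$ holds for every $x,y$. In other words, $T$ is firmly nonexpansive if and only if its reflection $T_{2}$ is nonexpansive. This single equivalence is the engine of the proof and it is where all the content lies; the remaining steps are essentially bookkeeping.

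For (1)$\Rightarrow$(3) I would simply set $N:=T_{2}=2T-\mathrm{Id}$; the computation above shows $N$ is nonexpansive, and clearly $T=2^{-1}(\mathrm{Id}+N)$. For (3)$\Rightarrow$(2), note first that $T=2^{-1}(\mathrm{Id}+N)$ forces $N=2T-\mathrm{Id}=T_{2}$. Then for any $\lambda\in[0,2]$, writing $\mu:=\lambda/2\in[0,1]$ and substituting $\lambda T=\tfrac{\lambda}{2}(\mathrm{Id}+N)$ into the definition of $T_{\lambda}$ yields $T_{\lambda}=(1-\mu)\mathrm{Id}+\mu N$, a convex combination of the two nonexpansive operators $\mathrm{Id}$ and $N$; by Remark \ref{conv_com_comp} such a combination is nonexpansive, so $T_{\lambda}$ is nonexpansive for every $\lambda\in[0,2]$. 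Finally, (2)$\Rightarrow$(1) is immediate: specializing $\lambda=2$ gives that $T_{2}$ is nonexpansive, whence $T$ is firmly nonexpansive by the key equivalence, closing the cycle.

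I do not anticipate a genuine obstacle here. The only point requiring a little care is the observation that condition (3) is not really about the existence of \emph{some} nonexpansive $N$ but pins down $N$ uniquely as the reflection $T_{2}$; once this is noticed, the convex-combination rewriting $T_{\lambda}=(1-\lambda/2)\mathrm{Id}+(\lambda/2)T_{2}$ makes the passage to all intermediate relaxations transparent and avoids any case analysis on the sign of $1-\lambda$.
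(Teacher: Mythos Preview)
Your argument is correct and complete: the algebraic identity $\|2b-a\|^{2}=\|a\|^{2}+4(\|b\|^{2}-\langle a,b\rangle)$ gives exactly the equivalence between firm nonexpansiveness of $T$ and nonexpansiveness of $T_{2}$, and the cycle (1)$\Rightarrow$(3)$\Rightarrow$(2)$\Rightarrow$(1) closes cleanly via the convex-combination rewriting $T_{\lambda}=(1-\lambda/2)\mathrm{Id}+(\lambda/2)T_{2}$.

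The paper itself does not give a proof of this statement; it simply cites Theorem~2.2.10 in Cegielski's book. So your self-contained argument is not so much a ``different route'' as it is the only route actually written out: you supply the standard direct proof that the reference would provide, whereas the paper treats the result as background and defers entirely to the literature.
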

\begin{proof}
See Theorem 2.2.10 in \cite{C_book}.
\end{proof}
\begin{thm}
\label{FNE_relax_FNE}For any $\alpha\in\left(0,2\right]$, an operator
$T:\mathcal{H}\rightarrow\mathcal{H}$ is firmly nonexpansive if and
only if its relaxation $T_{\alpha}$ is $\left(2-\alpha\right)\alpha^{-1}$-firmly
nonexpansive, that is, if and only if
\[
\left(\forall x,y\in\mathcal{H}\right)\left\Vert T_{\alpha}\left(x\right)-T_{\alpha}\left(y\right)\right\Vert ^{2}\le\left\Vert x-y\right\Vert ^{2}-\left(2-\alpha\right)\alpha^{-1}\left\Vert \left(x-T_{\alpha}\left(x\right)\right)-\left(y-T_{\alpha}\left(y\right)\right)\right\Vert ^{2}.
\]
\end{thm}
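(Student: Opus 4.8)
The plan is to reduce the claimed equivalence to a single algebraic identity evaluated at an arbitrary fixed pair of points. First I would fix $x,y\in\mathcal{H}$ and abbreviate $a:=x-y$ and $b:=T(x)-T(y)$, so that the firmly nonexpansive condition for $T$ reads, at this pair, $\langle a,b\rangle\ge\|b\|^{2}$. Since $T_{\alpha}=(1-\alpha)\mathrm{Id}+\alpha T$, a direct computation gives $T_{\alpha}(x)-T_{\alpha}(y)=(1-\alpha)a+\alpha b$, while from $x-T_{\alpha}(x)=\alpha\left(x-T(x)\right)$ one obtains $\left(x-T_{\alpha}(x)\right)-\left(y-T_{\alpha}(y)\right)=\alpha(a-b)$.

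Next I would substitute these into the $(2-\alpha)\alpha^{-1}$-firmly nonexpansive inequality for $T_{\alpha}$. The relaxation coefficient simplifies cleanly, since $(2-\alpha)\alpha^{-1}\|\alpha(a-b)\|^{2}=(2-\alpha)\alpha\|a-b\|^{2}$, so the inequality to be established becomes $\|(1-\alpha)a+\alpha b\|^{2}\le\|a\|^{2}-(2-\alpha)\alpha\|a-b\|^{2}$. Expanding both sides via the inner product and writing $A=\|a\|^{2}$, $C=\langle a,b\rangle$, $B=\|b\|^{2}$, I would form the difference of the right-hand and left-hand sides and track the coefficients of $A$, $B$ and $C$ separately.

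The crux, and the only place where care is needed, is the bookkeeping of these coefficients. The coefficient of $A$ works out to $1-(2\alpha-\alpha^{2})-(1-\alpha)^{2}=0$, so the $\|x-y\|^{2}$ terms cancel completely; the coefficient of $C$ is $2\alpha$ and that of $B$ is $-2\alpha$. Hence the whole difference collapses to $2\alpha\bigl(\langle a,b\rangle-\|b\|^{2}\bigr)$. Because $\alpha>0$, this difference is nonnegative if and only if $\langle a,b\rangle\ge\|b\|^{2}$, that is, if and only if the firmly nonexpansive inequality for $T$ holds at the pair $(x,y)$.

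Finally I would lift this per-pair equivalence to the universally quantified statement: since the two inequalities are equivalent for every individual pair $(x,y)\in\mathcal{H}^{2}$, the assertion that $T$ is firmly nonexpansive (all pairs satisfy $\langle a,b\rangle\ge\|b\|^{2}$) is equivalent to the assertion that $T_{\alpha}$ is $(2-\alpha)\alpha^{-1}$-firmly nonexpansive (all pairs satisfy the relaxed inequality). I do not anticipate a genuine obstacle here, as the argument is elementary; the only subtleties are verifying that the $A$-terms truly cancel and keeping the quantification to the very last step rather than invoking it prematurely. As a consistency check I would note the boundary case $\alpha=2$, where $(2-\alpha)\alpha^{-1}=0$ and the statement degenerates to ``$T$ is firmly nonexpansive if and only if the reflection $T_{2}$ is nonexpansive'', in agreement with Theorem \ref{FNE-equiv.cond}.
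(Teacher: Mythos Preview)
Your argument is correct: the algebra is sound, the coefficients of $A$, $B$, $C$ are computed accurately, and the per-pair equivalence lifts cleanly to the global one since $\alpha>0$ is fixed. The boundary check at $\alpha=2$ is a nice sanity test.

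As for comparison with the paper, there is essentially nothing to compare: the paper does not prove this statement at all but simply cites Corollary~2.2.15 in Cegielski's book \cite{C_book}. Your self-contained computation is therefore more informative than what the paper provides; it is also almost certainly the same calculation one would find in the cited source, since the identity $\mathrm{RHS}-\mathrm{LHS}=2\alpha\bigl(\langle a,b\rangle-\|b\|^{2}\bigr)$ is the canonical way to see this equivalence.
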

\begin{proof}
See Corollary 2.2.15 in \cite{C_book}.
\end{proof}
\begin{cor}
\label{cor: frm-nexp}Let $U:\mathcal{H}\rightarrow\mathcal{H}$ be
an operator and let $T:=Id+2^{-1}\left(1+\rho\right)\left(U-Id\right)$
for some $\rho\ge0$, that is, $T=U_{2^{-1}\left(1+\rho\right)}$.
Then $U$ is $\rho$-firmly nonexpansive if and only if $T$ is firmly
nonexpansive. In particular, $U$ is nonexpansive if and only if $T:=\frac{1}{2}\left(U+Id\right)$
is firmly nonexpansive.
\end{cor}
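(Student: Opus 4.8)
The plan is to derive this as a direct specialization of Theorem \ref{FNE_relax_FNE}, applied not to $U$ but to $T$ itself. The key observation is that relaxations compose multiplicatively: a one-line computation shows that $\left(S_{\lambda}\right)_{\mu}=S_{\lambda\mu}$ for any operator $S:\mathcal{H}\rightarrow\mathcal{H}$ and any scalars $\lambda,\mu$, so relaxation by $\lambda$ is inverted by relaxation by $\lambda^{-1}$. Writing $\lambda:=2^{-1}\left(1+\rho\right)$, so that $T=U_{\lambda}=\mathrm{Id}+\lambda\left(U-\mathrm{Id}\right)$ by hypothesis, I would first record that $\mu:=\lambda^{-1}=2/\left(1+\rho\right)$ lies in $\left(0,2\right]$ precisely because $\rho\ge0$, and that consequently $T_{\mu}=\left(U_{\lambda}\right)_{\mu}=U_{\lambda\mu}=U_{1}=U$.

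Next I would apply Theorem \ref{FNE_relax_FNE} to the operator $T$ with the relaxation parameter $\alpha:=\mu\in\left(0,2\right]$: it asserts that $T$ is firmly nonexpansive if and only if $T_{\mu}$ is $\left(2-\mu\right)\mu^{-1}$-firmly nonexpansive. Since $T_{\mu}=U$, it remains only to simplify the firmness constant. A direct computation gives $2-\mu=2\rho/\left(1+\rho\right)$ and $\mu^{-1}=\left(1+\rho\right)/2$, whence $\left(2-\mu\right)\mu^{-1}=\rho$. Therefore the theorem reads exactly as: $T$ is firmly nonexpansive if and only if $U$ is $\rho$-firmly nonexpansive, which is the claimed equivalence.

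For the final ``in particular'' assertion I would simply set $\rho=0$. Then $\lambda=2^{-1}$, so that $T=U_{1/2}=\frac{1}{2}\left(\mathrm{Id}+U\right)$ as required, and the $\rho$-firm nonexpansiveness of $U$ collapses to $\left\Vert U\left(x\right)-U\left(y\right)\right\Vert ^{2}\le\left\Vert x-y\right\Vert ^{2}$ for all $x,y\in\mathcal{H}$, i.e.\ to the nonexpansiveness of $U$; the general equivalence then yields the statement (alternatively, this particular case is just Theorem \ref{FNE-equiv.cond}(iii)).

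I do not anticipate a genuine obstacle here, since everything reduces to the already-established Theorem \ref{FNE_relax_FNE}. The only points requiring care are purely bookkeeping ones: one must verify that $\mu=2/\left(1+\rho\right)$ stays within the admissible range $\left(0,2\right]$ for every $\rho\ge0$ (it does, attaining the value $2$ at $\rho=0$ and decreasing toward $0$ as $\rho\to\infty$), and that the composition identity $\left(U_{\lambda}\right)_{1/\lambda}=U$ is invoked with the correct parameter so that the firmness constant simplifies to exactly $\rho$ rather than to some nearby expression.
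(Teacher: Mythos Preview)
Your proposal is correct and follows exactly the route the paper takes: the paper's proof is the single line ``Immediate from Theorem \ref{FNE_relax_FNE},'' and what you have written is precisely the unpacking of that immediacy via the composition identity $\left(U_{\lambda}\right)_{\mu}=U_{\lambda\mu}$ and the computation $\left(2-\mu\right)\mu^{-1}=\rho$.
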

\begin{proof}
Immediate from Theorem \ref{FNE_relax_FNE}.
\end{proof}
\begin{defn}
We say that an operator $T:\mathcal{H}\rightarrow\mathcal{H}$ is:
\begin{enumerate}
\item \textit{Quasi-nonexpansive} if 
\[
\left\Vert T\left(x\right)-z\right\Vert \le\left\Vert x-z\right\Vert 
\]
for each $x\in\mathcal{H}$ and $z\in\mathrm{Fix}T$.
\item \textit{$\rho$-strongly quasi-nonexpansive} for some $0\le\rho\in\mathbb{R}$
if
\begin{equation}
\left\Vert T\left(x\right)-z\right\Vert ^{2}\le\left\Vert x-z\right\Vert ^{2}-\rho\left\Vert T\left(x\right)-x\right\Vert ^{2}\label{eq:-1-1}
\end{equation}
for all $x\in\mathcal{H}$ and for all $z\in\mathrm{Fix}T$. If $T$
satisfies \eqref{eq:-1-1} for some $\rho>0$, then it is called strongly
quasi-nonexpansive.
\end{enumerate}
\end{defn}
\begin{rem}
\label{FNE-SQNE}Clearly, for any $\rho\ge0$, a $\rho$-strongly
quasi-nonexpansive operator is, in particular, quasi-nonexpansive
and a $\rho$-firmly nonexpansive operator operator is, in particular,
$\rho$-strongly quasi-nonexpansive.
\end{rem}
\begin{rem}
Similarly to Remark \ref{Rem_cut} about cutters, we mention that
in the literature (as, for example, Definitions 2.1.19 and 2.1.38
in \cite{C_book}) quasi-nonexpansive and strongly quasi-nonexpansive
operators are required to have a nonempty fixed point set by definition.
Here we define these operators without this requirement and assume
the nonemptiness of their fixed points set only if we need it.
\end{rem}
\begin{cor}
\label{comp_conv_comb_firm}For a positive integer $m$, let $\left\{ U_{i}\right\} _{i=1}^{m}$
be a finite family of $\rho_{i}$-firmly nonexpansive operators, where
$U_{i}:\mathcal{H\rightarrow\mathcal{H}}$ and $\rho_{i}\in\left[0,\infty\right)$
for each $i=1,2,\dots,m$. Set $\rho:=\min_{i\in\left\{ 1,2,\dots,m\right\} }\rho_{i}$.
Then:
\begin{enumerate}
\item For each finite family of numbers $\left\{ \omega_{i}\right\} _{i=1}^{m}\subset\left[0,1\right]$
such that $\sum_{i=1}^{m}\omega_{i}=1$, the convex combination $U:=\sum_{i=1}^{m}\omega_{i}U_{i}$
is $\rho$- firmly nonexpansive.
\item The composition $V:=U_{m}\cdots U_{2}U_{1}$ is $\rho m^{-1}$- firmly
nonexpansive.
\end{enumerate}
\end{cor}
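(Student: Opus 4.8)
The plan is to work directly from the defining inequality of $\rho$-firm nonexpansivity, after the preliminary observation that since $\rho=\min_i\rho_i\le\rho_i$ and norms are nonnegative, each $U_i$ is in particular $\rho$-firmly nonexpansive: for all $x,y\in\mathcal{H}$,
\[
\left\Vert U_i(x)-U_i(y)\right\Vert^2\le\left\Vert x-y\right\Vert^2-\rho\left\Vert (x-U_i(x))-(y-U_i(y))\right\Vert^2 .
\]
Working with the common constant $\rho$ throughout is what makes both assertions uniform.

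For part (i), I would fix $x,y$ and abbreviate $a_i:=U_i(x)-U_i(y)$ and $b_i:=(x-U_i(x))-(y-U_i(y))$, so the inequality above reads $\left\Vert a_i\right\Vert^2+\rho\left\Vert b_i\right\Vert^2\le\left\Vert x-y\right\Vert^2$. A direct computation, using $\sum_i\omega_i=1$, gives $U(x)-U(y)=\sum_i\omega_i a_i$ and $(x-U(x))-(y-U(y))=\sum_i\omega_i b_i$. Applying convexity of the squared norm (Jensen's inequality) to each of these two convex combinations and then combining yields
\[
\left\Vert U(x)-U(y)\right\Vert^2+\rho\left\Vert (x-U(x))-(y-U(y))\right\Vert^2\le\sum_i\omega_i\!\left(\left\Vert a_i\right\Vert^2+\rho\left\Vert b_i\right\Vert^2\right)\le\sum_i\omega_i\left\Vert x-y\right\Vert^2=\left\Vert x-y\right\Vert^2,
\]
which is exactly the $\rho$-FNE inequality for $U$. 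This part is routine and I expect no difficulty.

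For part (ii), I would introduce the intermediate iterates $x_0:=x$, $x_i:=U_i(x_{i-1})$ and likewise $y_0:=y$, $y_i:=U_i(y_{i-1})$, so that $x_m=V(x)$ and $y_m=V(y)$. Setting $c_i:=(x_{i-1}-x_i)-(y_{i-1}-y_i)$, the $\rho$-FNE inequality for $U_i$ applied to the pair $(x_{i-1},y_{i-1})$ reads $\left\Vert x_i-y_i\right\Vert^2\le\left\Vert x_{i-1}-y_{i-1}\right\Vert^2-\rho\left\Vert c_i\right\Vert^2$, and telescoping over $i=1,\dots,m$ gives
\[
\left\Vert V(x)-V(y)\right\Vert^2\le\left\Vert x-y\right\Vert^2-\rho\sum_{i=1}^m\left\Vert c_i\right\Vert^2 .
\]

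The crux of part (ii) — the only place requiring an idea beyond bookkeeping — is to recognize the identity $(x-V(x))-(y-V(y))=(x_0-x_m)-(y_0-y_m)=\sum_{i=1}^m c_i$, obtained by writing $x_0-x_m=\sum_i(x_{i-1}-x_i)$ and similarly for $y$. With this decomposition of the total displacement into the per-step displacements in hand, the Cauchy--Schwarz (equivalently, power-mean) inequality $\left\Vert\sum_{i=1}^m c_i\right\Vert^2\le m\sum_{i=1}^m\left\Vert c_i\right\Vert^2$ furnishes $\sum_i\left\Vert c_i\right\Vert^2\ge m^{-1}\left\Vert (x-V(x))-(y-V(y))\right\Vert^2$. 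Substituting this lower bound into the telescoped estimate produces $\left\Vert V(x)-V(y)\right\Vert^2\le\left\Vert x-y\right\Vert^2-\rho m^{-1}\left\Vert (x-V(x))-(y-V(y))\right\Vert^2$, that is, $V$ is $\rho m^{-1}$-firmly nonexpansive. The main obstacle is simply spotting that the overall displacement splits as the sum of the $c_i$; once that is seen, the factor $m^{-1}$ is precisely what Cauchy--Schwarz delivers.
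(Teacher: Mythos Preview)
Your argument is correct in both parts. The observation that each $U_i$ is already $\rho$-FNE for the common $\rho=\min_i\rho_i$ is the right starting point; the Jensen step in part (i) and the telescoping plus Cauchy--Schwarz step in part (ii) are exactly what is needed, and the identity $(x-V(x))-(y-V(y))=\sum_{i=1}^m c_i$ is the only nontrivial ingredient.

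The paper, however, does not argue directly from the $\rho$-FNE inequality at all: its entire proof is the line ``See Theorems 2.2.35 and 2.2.42 in \cite{C_book} along with Theorem \ref{FNE_relax_FNE} above.'' In other words, the paper's route is to use Theorem \ref{FNE_relax_FNE} to translate ``$\rho$-firmly nonexpansive'' into ``$\alpha$-relaxed firmly nonexpansive'' (equivalently, $\alpha$-averaged), invoke the cited results from Cegielski's monograph on convex combinations and compositions of such operators, and translate back. Your approach is genuinely more elementary and self-contained: it avoids the detour through the relaxed/averaged formulation and the external references, at the cost of reproducing inline what those references encapsulate. The paper's approach buys brevity and situates the result within a known framework; yours buys transparency about where the constants $\rho$ and $\rho m^{-1}$ actually come from.
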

\begin{proof}
See Theorems 2.2.35 and 2.2.42 in \cite{C_book} along with Theorem
\ref{FNE_relax_FNE} above.
\end{proof}
\begin{defn}
An operator $T:\mathcal{H\rightarrow\mathcal{H}}$ is \textit{weakly
regular} (i.e., satisfies \textit{Opial's demi-closedness principle
which says that $T-Id$ is demiclosed at $0$)} if for any sequence
$\left\{ x^{k}\right\} _{k=0}^{\infty}\subset\mathcal{H}$ and any
$x\in\mathcal{H}$, the following implication holds:
\[
\left\{ \begin{array}{c}
x^{k}\rightharpoonup x\\
T\left(x^{k}\right)-x^{k}\rightarrow0
\end{array}\Longrightarrow x\in\mathrm{Fix}T\right..
\]
\end{defn}
\begin{lem}
\label{lemma:3.2.5}Let $T:\mathcal{H}\rightarrow\mathcal{H}$ be
nonexpansive. Then $T$ is weakly regular.
\end{lem}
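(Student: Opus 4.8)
The plan is to establish the demiclosedness of $T-Id$ at $0$ by a direct Hilbert-space computation that isolates the scalar $\|u-T(u)\|^2$ and forces it to be nonpositive, where $u$ denotes the weak limit. Concretely, I would take an arbitrary sequence $\{x^k\}_{k=0}^{\infty}\subset\mathcal{H}$ and a point $u\in\mathcal{H}$ satisfying $x^k\rightharpoonup u$ and $T(x^k)-x^k\to 0$, and I would abbreviate $e^k:=T(x^k)-x^k$, so that $e^k\to 0$ strongly. The objective is then to prove $T(u)=u$, i.e. $u\in\mathrm{Fix}\,T$.

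The starting point is the nonexpansiveness inequality $\|T(x^k)-T(u)\|^2\le\|x^k-u\|^2$. On the left I would substitute $T(x^k)=x^k+e^k$ and expand $\|(x^k-T(u))+e^k\|^2$, and I would also expand $\|x^k-T(u)\|^2=\|x^k-u\|^2+2\langle x^k-u,\,u-T(u)\rangle+\|u-T(u)\|^2$. The decisive feature is that after both expansions the term $\|x^k-u\|^2$ appears on each side and cancels, leaving
\[
2\langle x^k-u,\,u-T(u)\rangle+\|u-T(u)\|^2+2\langle x^k-T(u),\,e^k\rangle+\|e^k\|^2\le 0.
\]

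I would then let $k\to\infty$. The first inner product tends to $0$ because $x^k\rightharpoonup u$ and $u-T(u)$ is a fixed vector. For the third term, a weakly convergent sequence is norm-bounded (Banach--Steinhaus), so $\{x^k-T(u)\}$ is bounded while $e^k\to 0$ strongly, whence $\langle x^k-T(u),\,e^k\rangle\to 0$; likewise $\|e^k\|^2\to 0$. Passing to the limit therefore yields $\|u-T(u)\|^2\le 0$, forcing $T(u)=u$, which is exactly weak regularity.

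The computation is routine once the cancellation is arranged; the only step requiring genuine care is the justification that the two cross terms vanish in the limit -- one through the definition of weak convergence tested against the fixed vector $u-T(u)$, the other through the strong convergence $e^k\to 0$ together with boundedness of the weakly convergent sequence. An alternative route would invoke Opial's lemma directly: if $T(u)\neq u$ then $\liminf_k\|x^k-u\|<\liminf_k\|x^k-T(u)\|$, while the triangle inequality combined with $e^k\to 0$ gives $\liminf_k\|x^k-T(u)\|\le\liminf_k\|x^k-u\|$, a contradiction. I prefer the self-contained expansion above, since it keeps the argument elementary and avoids citing Opial's inequality.
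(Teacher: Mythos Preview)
Your argument is correct: the expansion is set up precisely so that the $\|x^{k}-u\|^{2}$ terms cancel, and each remaining cross term tends to zero for exactly the reasons you give (weak convergence tested against the fixed vector $u-T(u)$, and boundedness of a weakly convergent sequence paired with strong convergence of $e^{k}$). The conclusion $\|u-T(u)\|^{2}\le 0$ follows, and your alternative via Opial's inequality is also valid.

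The paper, however, does not prove this lemma at all; it simply cites Lemma~3.2.5 in Cegielski's monograph \cite{C_book}. So your route is genuinely different in that it is self-contained rather than deferred to the literature. What your approach buys is that a reader need not chase the reference and sees the classical Browder demiclosedness argument in full; what the paper's citation buys is brevity, since the result is standard and the authors treat it as background. Either is appropriate here, and your direct computation is the usual textbook proof one would find behind the citation.
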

\begin{proof}
See Lemma 3.2.5 in \cite{C_book}.
\end{proof}
\begin{example}
\label{metric projection}Given a nonempty, closed an convex subset
$C$ of $\mathcal{H}$, the metric projection $P_{C}$ onto $C$ is
firmly nonexpansive (see, e.g., Theorem 2.2.21 in \cite{C_book})
and, hence, a nonexpansive cutter (by Theorem \ref{thm:2.2.5}) and
weakly regular (by Lemma \ref{lemma:3.2.5}). Moreover $\mathrm{Fix}P_{C}=C$.
\end{example}
\begin{defn}
For a nonempty, closed and convex subset $C$ of $\mathcal{H}$, the
sequence $\left\{ x^{k}\right\} _{k=0}^{\infty}$ in $\mathcal{H}$
is
\begin{enumerate}
\item \textit{Fej{\'e}r monotone} w\textit{ith respect to $C$} if for
each $z\in C$ and each $k\in\mathbb{N}$, 
\[
\left\Vert x^{k+1}-z\right\Vert \le\left\Vert x^{k}-z\right\Vert .
\]
\item \textit{Strictly Fej{\'e}r monotone} \textit{with respect to $C$}
if for each $z\in C$ and each $k\in\mathbb{N}$,
\[
\left\Vert x^{k+1}-z\right\Vert <\left\Vert x^{k}-z\right\Vert .
\]
\end{enumerate}
\end{defn}
\begin{thm}
\label{Fejer}Let $C$ be a nonempty, closed and convex subset of
$\mathcal{H}$. If $\left\{ x^{k}\right\} _{k=0}^{\infty}$ is Fej{\'e}r
monotone with respect to $C$, then it converges strongly to some
point in $C$ if and only if 
\[
\lim_{k\rightarrow\infty}d\left(x^{k},C\right)=0.
\]
\end{thm}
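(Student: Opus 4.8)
The plan is to prove the two implications separately, with the forward (``if'') direction carrying essentially all of the content. For the ``only if'' direction I would simply note that if $x^{k}\rightarrow x^{*}$ strongly with $x^{*}\in C$, then $d\left(x^{k},C\right)\le\left\Vert x^{k}-x^{*}\right\Vert \rightarrow0$, which is immediate from the definition of $d\left(\cdot,C\right)$ as an infimum over $C$.

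For the ``if'' direction I would assume that $\left\{ x^{k}\right\} _{k=0}^{\infty}$ is Fej\'er monotone with respect to $C$ and that $\lim_{k\rightarrow\infty}d\left(x^{k},C\right)=0$, and aim to show that $\left\{ x^{k}\right\} $ is a Cauchy sequence. The key observation is that a single, well-chosen point of $C$ can dominate an entire tail of the sequence. First I would fix $\varepsilon>0$; since $d\left(x^{k},C\right)\rightarrow0$, I can choose $N\in\mathbb{N}$ with $d\left(x^{N},C\right)<\varepsilon/2$, and then, by the definition of the distance as an infimum, pick $z\in C$ with $\left\Vert x^{N}-z\right\Vert <\varepsilon/2$. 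Next I would invoke Fej\'er monotonicity relative to this fixed $z$: since $\left\Vert x^{k+1}-z\right\Vert \le\left\Vert x^{k}-z\right\Vert $ for every $k$, the sequence $\left\{ \left\Vert x^{k}-z\right\Vert \right\} $ is nonincreasing, whence $\left\Vert x^{k}-z\right\Vert \le\left\Vert x^{N}-z\right\Vert <\varepsilon/2$ for all $k\ge N$. Consequently, for all $m,k\ge N$ the triangle inequality gives $\left\Vert x^{m}-x^{k}\right\Vert \le\left\Vert x^{m}-z\right\Vert +\left\Vert x^{k}-z\right\Vert <\varepsilon$, which establishes that $\left\{ x^{k}\right\} $ is Cauchy.

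Since $\mathcal{H}$ is complete, it then follows that $x^{k}\rightarrow x^{*}$ strongly for some $x^{*}\in\mathcal{H}$, and it remains only to verify that $x^{*}\in C$. Here I would use that the map $x\mapsto d\left(x,C\right)$ is $1$-Lipschitz, and hence continuous, so that $d\left(x^{*},C\right)=\lim_{k\rightarrow\infty}d\left(x^{k},C\right)=0$; since $C$ is closed, $d\left(x^{*},C\right)=0$ forces $x^{*}\in C$, completing the argument.

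As for the main difficulty: there is no serious analytic obstacle here, and the argument is essentially careful bookkeeping. The one point that requires the right idea, rather than a routine estimate, is the realization that Fej\'er monotonicity should be applied relative to a single anchor point $z$ extracted from a sufficiently late term $x^{N}$, rather than, say, attempting to prove directly that the projections $\left\{ P_{C}\left(x^{k}\right)\right\} $ form a Cauchy sequence (which can also be made to work but is less economical). Once $z$ is chosen so that $\left\Vert x^{N}-z\right\Vert $ is small, Fej\'er monotonicity automatically propagates this smallness to the whole tail, and the Cauchy property drops out at once.
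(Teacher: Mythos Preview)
Your proof is correct and complete. The paper does not supply its own argument for this theorem: it simply cites Theorem~2.16(v) of \cite{BB96}. Your self-contained proof is the standard one---choose a near-projection anchor $z\in C$ at a late index $N$, use Fej\'er monotonicity to propagate the bound $\Vert x^{k}-z\Vert<\varepsilon/2$ to the entire tail, conclude Cauchy, and then use closedness of $C$ together with continuity of $d(\cdot,C)$ to place the limit in $C$. There is nothing to compare methodologically since the paper defers to the literature; your write-up is exactly the argument one finds in the cited source.
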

\begin{proof}
See Theorems 2.16(v) in \cite{BB96}.
\end{proof}
Next we investigate the properties of the following algorithm which
is the algorithmic framework for our GDSA method.
\begin{lyxalgorithm}
[The algorithmic framework]\label{alg 1}

Given $\varepsilon\in\left(0,1\right]$, $x^{0}\in\mathcal{H}$ and
a sequence $\left\{ T_{k}\right\} _{k=0}^{\infty}$ of operators,$T_{k}:\mathcal{H\rightarrow\mathcal{H}}$
for each $k\in\mathbb{N}$, let the algorithm be defined by the recurrence
\[
x^{k+1}:=x^{k}+\lambda_{k}\left(T_{k}\left(x^{k}\right)-x^{k}\right),
\]
where $\lambda_{k}\in\left[\varepsilon,2-\varepsilon\right]$ for
each $k\in\mathbb{N}$.
\end{lyxalgorithm}
The next definition of bounded perturbations resilience of an iterative
algorithm that is governed by an infinite sequence of algorithmic
operators generalizes the earlier commonly used definition, e.g.,
Definition 3.1 in \cite{DSAP}, wherein only a single operator was
considered. See \cite{BuRZ1}, where it was shown for the first time
that if the exact iterates of a nonexpansive operator converge, then
its inexact iterates with summable errors converge as well.
\begin{defn}
\label{def:resilient}\textbf{Bounded perturbations resilience}. Let
$\Gamma\subseteq\mathcal{H}$ be a given nonempty subset of $\mathcal{H}$
and $\left\{ T_{k}\right\} _{k=0}^{\infty}$ be a sequence of operators,
$T_{k}:\mathcal{H}\rightarrow\mathcal{H}$ for each $k\in\mathbb{N}$.
The algorithm $x^{k+1}:=T_{k}(x^{k}),$ for all $k\in\mathbb{N},$
is said to be \textit{weakly (strongly) bounded perturbations resilient}
with respect to\textbf{ $\Gamma$}\emph{ }if the following is true:
If a sequence $\{x^{k}\}_{k=0}^{\infty},$ generated by the algorithm,
converges weakly (strongly)\textit{ }to a point in $\Gamma$ for all
$x^{0}\in\mathcal{H}$, then any sequence $\{y^{k}\}_{k=0}^{\infty}$
in $\mathcal{H}$ that is generated by the algorithm $y^{k+1}:=T_{k}(y^{k}+\beta_{k}v^{k}),$
for all $k\in\mathbb{N},$ also converges weakly (strongly) to a point
in $\Gamma$ for all $y^{0}\in\mathcal{H},$ provided that $\{\beta_{k}v^{k}\}_{k=0}^{\infty}$
are bounded perturbations, meaning that $\left\{ \beta_{k}\right\} _{k=0}^{\infty}$
is a sequence of positive real numbers such that $\sum_{k=0}^{\infty}\beta_{k}<\infty$
and that the sequence $\{v^{k}\}_{k=0}^{\infty}$ is a bounded sequence
in $\mathcal{H}$.
\end{defn}
\begin{rem}
The terms \textit{``weakly (strongly)''} in the above definition
are related to the convergence of the considered sequences. They should
not be confused with the notions of weak and strong perturbation resilience
used in the theory of the superiorization method, see \cite{weak-strong-superiorization},
particularly Definition 9 therein.
\end{rem}
The notions of coherence and strong coherence presented next play
a fundamental role in our work here.
\begin{defn}
A sequence $\left\{ T_{k}\right\} _{k=0}^{\infty}$ of self-mapping
operators of $\mathcal{H}$ with the set of common fixed points $F:=\cap_{k=0}^{\infty}\mathrm{Fix}T_{k}$
is:
\begin{enumerate}
\item \textit{Coherent} if for every bounded sequence $\left\{ z^{k}\right\} _{k=0}^{\infty}$
in $\mathcal{H}$, we have
\[
\left\{ \begin{array}{c}
\sum_{k=0}^{\infty}\Vert z^{k+1}-z^{k}\Vert^{2}<\infty\\
\sum_{k=0}^{\infty}\Vert T_{k}\left(z^{k}\right)-z^{k}\Vert^{2}<\infty
\end{array}\Longrightarrow\mathfrak{W}\left(\left\{ \mathit{z^{k}}\right\} _{k=0}^{\infty}\right)\subset F.\right.
\]
\item \textit{Strongly coherent} if for any bounded sequence $\left\{ z^{k}\right\} _{k=0}^{\infty}$
in $\mathcal{H}$, we have
\[
\left\{ \begin{array}{c}
z^{k+1}-z^{k}\rightarrow0\\
T_{k}\left(z^{k}\right)-z^{k}\rightarrow0
\end{array}\Longrightarrow\mathfrak{W}\left(\left\{ \mathit{z^{k}}\right\} _{k=0}^{\infty}\right)\subset F.\right.
\]
\end{enumerate}
\end{defn}
\begin{rem}
\label{rem2.14}Clearly, a strongly coherent sequence of operators
is coherent. See Example 3.3 in \cite{BRZ(R)} to verify that this
inclusion is proper.
\end{rem}
\begin{lem}
\label{proposition 4.5 }Let $\delta\in\left(0,1\right]$ be a fixed
real number, $\left\{ \lambda_{k}\right\} _{k=0}^{\infty}$ be a real
sequence such that $\lambda_{k}\in\left[\delta,1\right]$ for each
$k\in\mathbb{N}$ and let $\left\{ T_{k}\right\} _{k=0}^{\infty}$
be a sequence of operators, $T_{k}:\mathcal{H}\rightarrow\mathcal{H}$
for each $k\in\mathbb{N}$. Then the sequence $\left\{ T_{k\lambda_{k}}\right\} _{k=0}^{\infty}$
of $\lambda_{k}$-relaxations of $\left\{ T_{k}\right\} _{k=0}^{\infty}$,
that is, $T_{k\lambda_{k}}:=Id+\lambda_{k}\left(T_{k}-Id\right)$
for each $k\in\mathbb{N}$, is coherent if and only if $\left\{ T_{k}\right\} _{k=0}^{\infty}$
is coherent.
\end{lem}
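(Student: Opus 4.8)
The plan is to reduce the whole statement to one elementary identity for the displacement vectors. Writing out the relaxation, for every $x\in\mathcal{H}$ and every $k\in\mathbb{N}$ we have
\[
T_{k\lambda_{k}}\left(x\right)-x=\lambda_{k}\left(T_{k}\left(x\right)-x\right),
\]
so the iteration ``steps'' of the two sequences are scalar multiples of one another. Since $\lambda_{k}\in\left[\delta,1\right]$ with $\delta>0$, squaring and using $\delta\le\lambda_{k}\le1$ gives the two-sided bound
\[
\delta^{2}\left\Vert T_{k}\left(x\right)-x\right\Vert ^{2}\le\left\Vert T_{k\lambda_{k}}\left(x\right)-x\right\Vert ^{2}\le\left\Vert T_{k}\left(x\right)-x\right\Vert ^{2},
\]
valid for all $x\in\mathcal{H}$ and all $k\in\mathbb{N}$. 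The entire lemma follows by inserting this bound into the defining summability conditions of coherence.

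Before doing so I would record that the two sequences share the same common fixed point set. Because each $\lambda_{k}\in\left(0,2\right]$, Remark \ref{Relaxation has the same Fix} yields $\mathrm{Fix}T_{k\lambda_{k}}=\mathrm{Fix}T_{k}$ for every $k$, hence $\bigcap_{k}\mathrm{Fix}T_{k\lambda_{k}}=\bigcap_{k}\mathrm{Fix}T_{k}=:F$. Thus the target set $F$ appearing in the definition of coherence is literally the same for $\left\{ T_{k}\right\} _{k=0}^{\infty}$ and for $\left\{ T_{k\lambda_{k}}\right\} _{k=0}^{\infty}$, and it only remains to match up the hypotheses on a given bounded sequence.

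For the forward implication, assume $\left\{ T_{k}\right\} _{k=0}^{\infty}$ is coherent and let $\left\{ z^{k}\right\} _{k=0}^{\infty}$ be any bounded sequence with $\sum_{k}\left\Vert z^{k+1}-z^{k}\right\Vert ^{2}<\infty$ and $\sum_{k}\left\Vert T_{k\lambda_{k}}\left(z^{k}\right)-z^{k}\right\Vert ^{2}<\infty$. The first condition is untouched, and the left inequality above gives $\left\Vert T_{k}\left(z^{k}\right)-z^{k}\right\Vert ^{2}\le\delta^{-2}\left\Vert T_{k\lambda_{k}}\left(z^{k}\right)-z^{k}\right\Vert ^{2}$, whence $\sum_{k}\left\Vert T_{k}\left(z^{k}\right)-z^{k}\right\Vert ^{2}<\infty$. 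Coherence of $\left\{ T_{k}\right\} _{k=0}^{\infty}$ then gives $\mathfrak{W}\left(\left\{ z^{k}\right\} _{k=0}^{\infty}\right)\subset F$, which is exactly coherence of $\left\{ T_{k\lambda_{k}}\right\} _{k=0}^{\infty}$. The converse is the mirror image: if $\left\{ T_{k\lambda_{k}}\right\} _{k=0}^{\infty}$ is coherent and $\left\{ z^{k}\right\} _{k=0}^{\infty}$ is bounded with both series for $\left\{ T_{k}\right\} _{k=0}^{\infty}$ finite, the right inequality gives $\left\Vert T_{k\lambda_{k}}\left(z^{k}\right)-z^{k}\right\Vert ^{2}\le\left\Vert T_{k}\left(z^{k}\right)-z^{k}\right\Vert ^{2}$, so the corresponding series is summable and coherence of $\left\{ T_{k\lambda_{k}}\right\} _{k=0}^{\infty}$ delivers $\mathfrak{W}\left(\left\{ z^{k}\right\} _{k=0}^{\infty}\right)\subset F$.

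I do not expect a genuine obstacle here; the only point not to overlook is the role of $\delta$. The upper bound, needed for the converse, holds for any $\lambda_{k}\in\left[0,1\right]$, but the lower bound, needed for the forward direction, requires the relaxation parameters to be uniformly bounded away from $0$, which is precisely what $\lambda_{k}\ge\delta>0$ guarantees. Were $\delta$ allowed to equal $0$, a relaxation step could shrink the displacement arbitrarily and finiteness of $\sum_{k}\left\Vert T_{k\lambda_{k}}\left(z^{k}\right)-z^{k}\right\Vert ^{2}$ would no longer force finiteness of $\sum_{k}\left\Vert T_{k}\left(z^{k}\right)-z^{k}\right\Vert ^{2}$.
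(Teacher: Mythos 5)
Your proof is correct and complete. The paper does not prove this lemma at all---it simply cites Proposition 4.5 of Bauschke and Combettes---so your elementary argument fills in what the authors left to the reference. The three ingredients you use are exactly the right ones: the identity $T_{k\lambda_{k}}(x)-x=\lambda_{k}\left(T_{k}(x)-x\right)$ giving the two-sided bound $\delta^{2}\Vert T_{k}(x)-x\Vert^{2}\le\Vert T_{k\lambda_{k}}(x)-x\Vert^{2}\le\Vert T_{k}(x)-x\Vert^{2}$, the equality of the common fixed point sets via Remark \ref{Relaxation has the same Fix} (which applies since $\lambda_{k}\ge\delta>0$), and the observation that the condition $\sum_{k}\Vert z^{k+1}-z^{k}\Vert^{2}<\infty$ is independent of the operators. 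Your closing remark about where $\delta>0$ is actually needed (only for the forward direction, to pass from summability of the relaxed displacements back to summability of the unrelaxed ones) is also accurate and is the one place a careless argument could go wrong.
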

\begin{proof}
See Proposition 4.5 in \cite{BC}.
\end{proof}
\begin{rem}
\label{alg_variant} Algorithm \ref{alg 1} has a slightly different
formulation in \cite{BC}, where $\lambda_{k}:=2-\varepsilon$ for
each $k\in\mathbb{N}$. Due to Remark \ref{Relaxation has the same Fix}
and Lemma \ref{proposition 4.5 }, the following theorem (presented
in \cite{BC}) is valid in the case of our adjusted algorithm above
as well.
\end{rem}
\begin{thm}
\label{Main BC} Suppose that $\left\{ T_{k}\right\} _{k=0}^{\infty}$
is a coherent sequence of cutters. If $\cap_{k=0}^{\infty}\mathrm{Fix}T_{k}\not=\emptyset$,
then the sequence defined by Algorithm \ref{alg 1} converges weakly
to some $x\in\cap_{k=0}^{\infty}\mathrm{Fix}T_{k}$.
\end{thm}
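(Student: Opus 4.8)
The plan is to show that any sequence $\{x^k\}_{k=0}^{\infty}$ generated by Algorithm \ref{alg 1} is Fej\'{e}r monotone with respect to $F:=\cap_{k=0}^{\infty}\mathrm{Fix}T_k$, that the residuals $T_k(x^k)-x^k$ together with the increments $x^{k+1}-x^k$ are square-summable, and then to feed these facts into the coherence hypothesis and close with the classical Opial weak-convergence argument. Note first that $F$ is closed and convex, being an intersection of the sets $\mathrm{Fix}T_k$, each of which is closed and convex because $T_k$ is a cutter.

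First I would establish the one-step estimate. Fix $z\in F$; then $z\in\mathrm{Fix}T_k$ for every $k$, so the cutter inequality gives $\langle z-T_k(x^k),\,x^k-T_k(x^k)\rangle\le0$, equivalently $\langle x^k-z,\,T_k(x^k)-x^k\rangle\le-\|T_k(x^k)-x^k\|^2$. Substituting $x^{k+1}-z=(x^k-z)+\lambda_k(T_k(x^k)-x^k)$ into $\|x^{k+1}-z\|^2$ and expanding, this yields
\[
\|x^{k+1}-z\|^2\le\|x^k-z\|^2-\lambda_k(2-\lambda_k)\,\|T_k(x^k)-x^k\|^2.
\]
Since $\lambda_k\in[\varepsilon,2-\varepsilon]$, the coefficient $\lambda_k(2-\lambda_k)$ is bounded below by $\sigma:=\varepsilon(2-\varepsilon)>0$, so in particular $\|x^{k+1}-z\|\le\|x^k-z\|$: the sequence is Fej\'{e}r monotone with respect to $F$, hence (as $F\ne\emptyset$) bounded, and $\lim_{k}\|x^k-z\|$ exists for every $z\in F$. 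Telescoping the displayed inequality over $k=0,\dots,N$ gives $\sigma\sum_{k=0}^{N}\|T_k(x^k)-x^k\|^2\le\|x^0-z\|^2$, whence $\sum_{k=0}^{\infty}\|T_k(x^k)-x^k\|^2<\infty$; and because $\|x^{k+1}-x^k\|=\lambda_k\|T_k(x^k)-x^k\|\le(2-\varepsilon)\|T_k(x^k)-x^k\|$, also $\sum_{k=0}^{\infty}\|x^{k+1}-x^k\|^2<\infty$.

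With these two summability bounds and the boundedness of $\{x^k\}$ in hand, the hypothesis that $\{T_k\}_{k=0}^{\infty}$ is coherent applies directly with $z^k:=x^k$ and yields $\mathfrak{W}(\{x^k\}_{k=0}^{\infty})\subset F$: every weak cluster point of $\{x^k\}$ lies in $F$. It remains to upgrade ``Fej\'{e}r monotone with all weak cluster points in $F$'' to genuine weak convergence. Since only the strong-convergence criterion (Theorem \ref{Fejer}) is quoted above, I would supply the weak version via the classical Opial argument: $\{x^k\}$ is bounded, so it has at least one weak cluster point, and all such points lie in $F$; if $u,v$ are two of them, along the respective subsequences one passes to the limit in the identity $\|x^k-u\|^2-\|x^k-v\|^2=2\langle x^k,\,v-u\rangle+\|u\|^2-\|v\|^2$, using that both $\lim_k\|x^k-u\|$ and $\lim_k\|x^k-v\|$ exist, to deduce $\|u-v\|^2=0$, i.e.\ $u=v$. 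Thus the weak cluster point is unique and $\{x^k\}$ converges weakly to it, a point of $F=\cap_{k=0}^{\infty}\mathrm{Fix}T_k$.

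I expect the genuinely delicate point to be this last step rather than the estimates: the coherence hypothesis is exactly what removes the usual obstacle (showing that weak cluster points are feasible), so the remaining work is the Opial-type uniqueness argument for weak limits, together with the routine-but-careful verification that the cutter inequality passes correctly through the relaxation by $\lambda_k$ to produce the coefficient $\lambda_k(2-\lambda_k)$ in the one-step estimate.
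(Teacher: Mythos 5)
Your proof is correct. Note, though, that the paper does not prove this theorem itself: it simply cites Theorem 4.2\textit{(i)} of \cite{BC}, and handles the passage from the fixed relaxation $\lambda_k=2-\varepsilon$ used there to the variable $\lambda_k\in[\varepsilon,2-\varepsilon]$ of Algorithm \ref{alg 1} indirectly, via Remark \ref{Relaxation has the same Fix} together with Lemma \ref{proposition 4.5 } (coherence is invariant under relaxations with parameters bounded away from $0$). You instead absorb the variable relaxation directly into the one-step estimate, correctly deriving $\|x^{k+1}-z\|^2\le\|x^k-z\|^2-\lambda_k(2-\lambda_k)\|T_k(x^k)-x^k\|^2$ from the cutter inequality and bounding $\lambda_k(2-\lambda_k)\ge\varepsilon(2-\varepsilon)$; this makes the square-summability of both the residuals and the increments immediate without invoking the relaxation lemma. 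The rest of your argument --- feeding the two summability bounds and the boundedness of $\{x^k\}$ into the coherence hypothesis to place all weak cluster points in $F$, and then using the Opial-type uniqueness argument for Fej\'er monotone sequences to upgrade to weak convergence --- is exactly the mechanism behind the cited result, so your write-up is a sound self-contained substitute for the citation and is, if anything, more transparent about where each hypothesis ($\varepsilon$-boundedness of $\lambda_k$, the cutter property, coherence, nonemptiness of $F$) is used.
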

\begin{proof}
See Theorem 4.2\textit{(i)} in \cite{BC}.
\end{proof}
The following theorem is a slight extension of Theorem 3.13 in \cite{BRZ(R)}.
We prove it here in the case of strong coherence for the convenience
of the reader.
\begin{thm}
\label{thm feas}Let $I$ be an arbitrary nonempty index set and let
$\left\{ C_{i}\right\} _{i\in I}$ be a family of (possibly empty)
closed and convex sets in $\mathcal{H}$ and let $\left\{ T_{k}\right\} _{k=0}^{\infty}$
be a sequence of operators, $S_{k}:\mathcal{H\rightarrow\mathcal{H}}$
for each $k\in\mathbb{N}$. Assume that 
\begin{equation}
C:=\cap_{i\in I}C_{i}\subset F:=\cap_{k=0}^{\infty}\mathrm{Fix}T_{k}\label{eq:-4}
\end{equation}
and $\left\{ I_{k}\right\} _{k=0}^{\infty}$ is an admissible control
sequence of subsets of $\mathbb{N}$, that is, for each $i\in I$,
there is an integer $M_{i}>0$ such that 
\begin{equation}
i\in\cup_{n=k}^{k+M_{i}-1}I_{n}\,\,\mathrm{for\,all}\,k\in\mathbb{N}.\label{eq:-12-1}
\end{equation}
Finally, suppose that for every $i\in I$, every $z\in\mathcal{H}$,
every bounded sequence $\left\{ z^{k}\right\} _{k=0}^{\infty}$ in
$\mathscr{\mathcal{H}}$, and every strictly increasing sequence $\left\{ n_{k}\right\} _{k=0}^{\infty}\subset\mathbb{N}$,
we have 
\begin{equation}
\left\{ \begin{array}{c}
z^{n_{k}}\rightharpoonup z\\
i\in I_{n_{k}}\,\,\mathrm{for\,all}\,k\in\mathbb{N}\\
z^{k+1}-z^{k}\rightarrow0\\
T_{k}\left(z^{k}\right)-z^{k}\rightarrow0
\end{array}\Longrightarrow z\in C_{i}.\right.\label{eq:-10}
\end{equation}
Then the sequence \textup{$\left\{ T_{k}\right\} _{k=0}^{\infty}$}
is strongly coherent.
\end{thm}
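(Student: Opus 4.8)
The plan is to verify the defining implication of strong coherence directly. Let $\{z^k\}_{k=0}^\infty$ be a bounded sequence with $z^{k+1}-z^k\to 0$ and $T_k(z^k)-z^k\to 0$, and let $z$ be any weak cluster point, so that $z^{m_j}\rightharpoonup z$ along some strictly increasing sequence $\{m_j\}_{j=0}^\infty$. Because $C\subset F$ by \eqref{eq:-4}, it is enough to show that $z\in C=\cap_{i\in I}C_i$, and for this I would fix an arbitrary $i\in I$ and arrange the antecedent of \eqref{eq:-10} for this particular $i$ along a suitable index sequence converging weakly to $z$; then \eqref{eq:-10} immediately returns $z\in C_i$.

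First I would use admissibility to move each $m_j$ to a nearby index at which $i$ is active. By \eqref{eq:-12-1} there is an integer $M_i>0$ with $i\in\cup_{n=k}^{k+M_i-1}I_n$ for every $k$; taking $k=m_j$ yields, for each $j$, some index $p_j\in\{m_j,m_j+1,\dots,m_j+M_i-1\}$ with $i\in I_{p_j}$. The displacement then obeys the bound $0\le p_j-m_j\le M_i-1$, which is uniform in $j$.

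The crux is that a displacement bounded independently of $j$ cannot move the weak limit. Telescoping gives $z^{p_j}-z^{m_j}=\sum_{t=m_j}^{p_j-1}(z^{t+1}-z^t)$, whence $\|z^{p_j}-z^{m_j}\|\le (M_i-1)\sup_{t\ge m_j}\|z^{t+1}-z^t\|$, and since $z^{k+1}-z^k\to 0$ while $m_j\to\infty$, the right-hand side tends to $0$. Thus $z^{p_j}-z^{m_j}\to 0$ strongly, and combining this with $z^{m_j}\rightharpoonup z$ yields $z^{p_j}\rightharpoonup z$. As $p_j\ge m_j\to\infty$, I would extract a strictly increasing subsequence $\{n_l\}_{l=0}^\infty$ of $\{p_j\}$; along it $z^{n_l}\rightharpoonup z$ and $i\in I_{n_l}$ for all $l$, while the two global hypotheses $z^{k+1}-z^k\to 0$ and $T_k(z^k)-z^k\to 0$ remain unchanged. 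All four conditions in \eqref{eq:-10} now hold for this $i$, so $z\in C_i$; since $i\in I$ was arbitrary, $z\in C\subset F$, establishing strong coherence.

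I expect the only point needing real care to be the bookkeeping of indices in the last step: the relocated indices $p_j$ need not increase monotonically, since each is chosen freely within its own length-$M_i$ window and may repeat or fall behind an earlier choice. The extraction of a strictly increasing subsequence — legitimate precisely because $p_j\to\infty$ — is exactly what licenses the application of \eqref{eq:-10}. Beyond this, the argument is a routine pairing of the admissibility window \eqref{eq:-12-1} with the telescoping estimate furnished by $z^{k+1}-z^k\to 0$.
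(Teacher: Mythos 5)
Your proof is correct and follows essentially the same route as the paper's: relocate each weak-subsequence index into its admissibility window where $i$ is active, use the telescoping estimate with at most $M_i$ summands to show the relocated subsequence still converges weakly to $z$, and then invoke \eqref{eq:-10}. The only cosmetic difference is that the paper secures monotonicity of the relocated indices by first thinning $\{n_k\}$ so that consecutive gaps are at least $M_i$, whereas you extract a strictly increasing subsequence of $\{p_j\}$ afterwards; both are valid.
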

\begin{proof}
We first show that under the assumptions of the theorem we have for
each bounded sequence $\left\{ z^{k}\right\} _{k=0}^{\infty}$ in
$\mathcal{H}$, 
\begin{equation}
\left\{ \begin{array}{c}
z^{k+1}-z^{k}\rightarrow0\\
T_{k}\left(z^{k}\right)-z^{k}\rightarrow0
\end{array}\Longrightarrow\mathfrak{W}\left(\left\{ \mathit{z^{k}}\right\} _{k=0}^{\infty}\right)\subset C.\right.\label{eq:-2}
\end{equation}
Let $\left\{ z^{k}\right\} _{k=0}^{\infty}$ be a bounded sequence
in $\mathcal{H}$ satisfying
\begin{equation}
\left\{ \begin{array}{c}
z^{k+1}-z^{k}\rightarrow0\\
T_{k}\left(z^{k}\right)-z^{k}\rightarrow0
\end{array}\right.\label{eq:-3}
\end{equation}
and let $z\in\mathfrak{W}\left(\left\{ \mathit{z^{k}}\right\} _{k=0}^{\infty}\right)$.
Then there is a strictly increasing sequence $\left\{ n_{k}\right\} _{k=0}^{\infty}$
of natural numbers such that $z^{n_{k}}\rightharpoonup z$. Suppose
that $i\in I$. By \eqref{eq:-12-1}, there is an $M_{i}$ such that
the condition in \eqref{eq:-12-1} holds for all $k\in\mathbb{N}$.
Therefore, there is a sequence $\left\{ p_{k}\right\} _{k=0}^{\infty}$
in $\mathbb{N}$ such that
\[
\left(\forall k\in\mathbb{N}\right)n_{k}\le p_{k}\le n_{k}+M_{i}-1\,\,\,\mathrm{and}\,\,\,i\in I_{p_{k}}.
\]
Without any loss of generality, we may assume that $n_{k+1}-n_{k}\ge M_{i}$
for each $k\in\mathbb{N}$, because otherwise we can choose a subsequence
of $\left\{ z^{n_{k}}\right\} _{k=0}^{\infty}$ with this property.
Thus, we can assume that $\left\{ p_{k}\right\} _{k=0}^{\infty}$
is strictly increasing. Moreover, 
\begin{equation}
\left(\forall k\in\mathbb{N}\right)z^{p_{k}}=\sum_{j=0}^{p_{k}-1-n_{k}}\left(z^{n_{k}+j+1}-z^{n_{k}+j}\right)+z^{n_{k}}.\label{eq:-11}
\end{equation}
(By definition $\sum_{j=0}^{-1}\left(z^{n_{k}+j+1}-z^{n_{k}+j}\right):=0$).
By \eqref{eq:-3} and due to the finite number of summands in \eqref{eq:-11},
which is at most $M_{i}$, we obtain $z^{p_{k}}\rightharpoonup z$.
By the condition in \eqref{eq:-10} with respect to the sequence $\left\{ p_{k}\right\} _{k=0}^{\infty}$,
we have $z\in C_{i}$. This is true for each $i\in I$; hence, $z\in C$.
Thus, $\mathfrak{W}\left(\left\{ \mathit{z^{k}}\right\} _{k=0}^{\infty}\right)\subset C$
and \eqref{eq:-2} holds. Combining this with \eqref{eq:-4}, we see
that the sequence $\left\{ T_{k}\right\} _{k=0}^{\infty}$ is strongly
coherent. Theorem \ref{thm feas} is now proved.
\end{proof}
\begin{rem}
Under the assumptions of Theorem \ref{thm feas}, if all $\left\{ T_{k}\right\} _{k=0}^{\infty}$
are cutters then Theorem \ref{Main BC} holds true when applied to
$\left\{ T_{k}\right\} _{k=0}^{\infty}$ and $F$ is replaced by $C$.
See Theorem 2.2.1 and Remark 2.2.4 in \cite{Bar_thes} in this connection.
\end{rem}
\begin{thm}
\label{error}Let $C\subset\mathcal{H}$ be nonempty and closed. Let
\textup{$\left\{ T_{k}\right\} _{k=0}^{\infty}$}, $T_{k}:\mathscr{\mathcal{H}\rightarrow\mathcal{H}}$
for each $k\in\mathbb{N}$, be a sequence of nonexpansive operators
satisfying $C\subset\cap_{k=0}^{\infty}\mathrm{Fix}T_{k}$. Assume
that for each $y\in\mathcal{H}$ and each $q\in\mathbb{N}$, the sequence
$\left\{ T_{q+k}\cdots T_{q+1}T_{q}\left(y\right)\right\} _{k=0}^{\infty}$
converges weakly (strongly) to an element of $C$. Let $\left\{ \gamma_{k}\right\} _{k=0}^{\infty}\subset\left[0,\infty\right)$
be a sequence such that $\sum_{\text{k}=0}^{\infty}\gamma_{k}<\infty$
and let $\left\{ y^{k}\right\} _{k=0}^{\infty}\subset\mathcal{H}$.
Further assume that for each $k\in\mathbb{N}$,
\[
\left\Vert y^{k+1}-T_{k}\left(y^{k}\right)\right\Vert \le\gamma_{k}.
\]
Then the sequence $\left\{ y^{k}\right\} _{k=0}^{\infty}$converges
weakly (strongly) to an element of $C$.
\end{thm}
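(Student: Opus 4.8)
The plan is to regard $\{y^k\}_{k=0}^\infty$ as an inexact version of the exact iterations driven by $\{T_k\}_{k=0}^\infty$ and to transfer the assumed convergence of the exact iterates to $\{y^k\}_{k=0}^\infty$, paying for the perturbations with the summability of $\{\gamma_k\}_{k=0}^\infty$. First I would note that each $T_k$ is nonexpansive and fixes every point of $C$, so that $\|T_k(x)-z\|\le\|x-z\|$ for all $x\in\mathcal{H}$ and $z\in C$. Together with $\|y^{k+1}-T_k(y^k)\|\le\gamma_k$, the triangle inequality yields $\|y^{k+1}-z\|\le\|y^k-z\|+\gamma_k$ for every $z\in C$. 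Since $\sum_{k=0}^\infty\gamma_k<\infty$, the elementary fact that a nonnegative sequence with $a_{k+1}\le a_k+\gamma_k$ converges (monitored by the non-increasing majorant $a_k+\sum_{j\ge k}\gamma_j$) shows that $\{y^k\}_{k=0}^\infty$ is bounded and that $\lim_{k\to\infty}\|y^k-z\|$ exists for each $z\in C$.

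The crux is a comparison with exact trajectories. Fixing $q\in\mathbb{N}$, I would set $u_q:=y^q$ and $u_{k+1}:=T_k(u_k)$ for $k\ge q$, so that, up to its initial term, $\{u_k\}_{k\ge q}$ coincides with the sequence $\{T_{q+k}\cdots T_q(y^q)\}_{k=0}^\infty$, which by hypothesis converges weakly (strongly) to some $x_q\in C$. Nonexpansivity of $T_k$ gives $\|y^{k+1}-u_{k+1}\|\le\|y^{k+1}-T_k(y^k)\|+\|T_k(y^k)-T_k(u_k)\|\le\gamma_k+\|y^k-u_k\|$, and since $y^q=u_q$, telescoping produces, for all $k\ge q$,
\[
\|y^k-u_k\|\le\sum_{j=q}^{k-1}\gamma_j\le\sigma_q,\qquad\text{where }\ \sigma_q:=\sum_{j=q}^\infty\gamma_j\to0\ \text{ as }\ q\to\infty.
\]
Thus $\{y^k\}_{k=0}^\infty$ stays within distance $\sigma_q$ of a convergent exact trajectory, and $\sigma_q$ can be made arbitrarily small.

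In the strong case $u_k\to x_q$, so the existence of $\lim_k\|y^k-x_q\|$ from the first step forces $\lim_k\|y^k-x_q\|\le\sigma_q$; consequently $\|x_q-x_{q'}\|\le\lim_k\|y^k-x_q\|+\lim_k\|y^k-x_{q'}\|\le\sigma_q+\sigma_{q'}$, so $\{x_q\}$ is Cauchy with limit $x^*$ in the closed set $C$, and $\limsup_k\|y^k-x^*\|\le\sigma_q+\|x_q-x^*\|\to0$ gives $y^k\to x^*$. In the weak case, the same bound will show that every weak cluster point of $\{y^k\}_{k=0}^\infty$ lies in $C$, after which an Opial-type uniqueness argument upgrades this to weak convergence to a single point of $C$.

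I expect the weak case to be the main obstacle, since distances to a candidate limit cannot be passed through directly as in the strong case. Instead, for a weak cluster point $w$ with $y^{k_n}\rightharpoonup w$ and $k_n\ge q$, I would use $u_{k_n}\rightharpoonup x_q$ to get $y^{k_n}-u_{k_n}\rightharpoonup w-x_q$, and then weak lower semicontinuity of the norm gives $\|w-x_q\|\le\liminf_n\|y^{k_n}-u_{k_n}\|\le\sigma_q$; letting $q\to\infty$ yields $d(w,C)=0$, hence $w\in C$. Uniqueness then follows from the Opial device: because $\lim_k\|y^k-z\|$ exists for every $z\in C$, the identity $\|y^k-w_1\|^2-\|y^k-w_2\|^2=2\langle y^k,w_2-w_1\rangle+\|w_1\|^2-\|w_2\|^2$ forces any two weak cluster points $w_1,w_2\in C$ to coincide, which completes the weak case.
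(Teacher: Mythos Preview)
Your argument is correct. The paper itself does not prove this theorem; it simply cites Theorems~3.2 and~5.2 of Butnariu, Reich and Zaslavski, so there is no in-paper proof to compare against. What you have written is essentially the standard proof from that source: quasi-Fej\'er monotonicity of $\{y^k\}$ with respect to $C$ gives boundedness and the existence of $\lim_k\|y^k-z\|$ for each $z\in C$; the telescoping comparison $\|y^k-u_k\|\le\sigma_q$ against the exact trajectory started at $y^q$ transfers the assumed convergence; and in the weak case the combination of weak lower semicontinuity of the norm (to force every weak cluster point into $C$) with the Opial identity (to force uniqueness) finishes the job. Your handling of the weak case is the right one and there is no gap: once $q$ is fixed, the full sequence $\{u_k\}_{k\ge q}$ converges weakly to $x_q$, hence so does its subsequence $\{u_{k_n}\}$, and the bound $\|w-x_q\|\le\sigma_q$ follows exactly as you wrote.
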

\begin{proof}
See Theorems 3.2 and 5.2 in \cite{BuRZ}.
\end{proof}
In the sequel we employ the following useful property of a convex
and continuous function.
\begin{thm}
\label{subdiff_ne+b}Let the function $\phi:\mathcal{H}\rightarrow\mathbb{R}$
be convex and continuous at the point $x\in\mathcal{H}$. Then the
subgradient set $\partial\phi\left(x\right)$ is nonempty.
\end{thm}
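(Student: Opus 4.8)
The plan is to argue geometrically via the epigraph of $\phi$ together with a supporting-hyperplane (Hahn--Banach separation) argument. Consider the epigraph $E:=\{(y,t)\in\mathcal{H}\times\mathbb{R}\mid\phi(y)\le t\}$, which is a convex subset of the Hilbert space $\mathcal{H}\times\mathbb{R}$ because $\phi$ is convex. The point $(x,\phi(x))$ lies on the boundary of $E$: it belongs to $E$, yet every neighborhood of it contains points $(x,t)$ with $t<\phi(x)$, none of which are in $E$.

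First I would use the continuity of $\phi$ at $x$ to show that $E$ has nonempty interior. Continuity at $x$ yields a radius $r>0$ and a bound $M$ with $\phi(y)\le M$ for all $y\in B(x,r)$; consequently $B(x,r)\times(M,\infty)\subset E$, so $\mathrm{int}\,E\neq\emptyset$. This is the step where the continuity hypothesis is essential and is the crux of the argument: without local boundedness above, the epigraph could have empty interior and no supporting hyperplane need exist.

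Next I would invoke the supporting-hyperplane form of the Hahn--Banach theorem at the boundary point $(x,\phi(x))$. Since $\mathrm{int}\,E\neq\emptyset$ and $(x,\phi(x))\notin\mathrm{int}\,E$, there is a nonzero pair $(\xi,\alpha)\in\mathcal{H}\times\mathbb{R}$ (identifying the dual of $\mathcal{H}$ with $\mathcal{H}$ via Riesz representation) such that
\[
\langle\xi,y\rangle+\alpha t\le\langle\xi,x\rangle+\alpha\phi(x)\qquad\text{for all }(y,t)\in E,
\]
the inequality passing from $\mathrm{int}\,E$ to all of $E$ by continuity of the functional. Taking $y=x$ and $t=\phi(x)+s$ with $s\ge0$ forces $\alpha s\le0$, hence $\alpha\le0$. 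To rule out $\alpha=0$, note that $\phi$ is real-valued on all of $\mathcal{H}$, so $(y,\phi(y))\in E$ for every $y$; if $\alpha=0$ the inequality would give $\langle\xi,y-x\rangle\le0$ for all $y\in\mathcal{H}$, forcing $\xi=0$ and contradicting $(\xi,\alpha)\neq0$. Therefore $\alpha<0$.

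Finally I would specialize the inequality to the points $(y,\phi(y))\in E$, obtaining $\langle\xi,y-x\rangle\le-\alpha(\phi(y)-\phi(x))$, and divide by $-\alpha>0$. Setting $g:=\xi/(-\alpha)$ gives $\langle g,y-x\rangle\le\phi(y)-\phi(x)$ for every $y\in\mathcal{H}$, which is exactly the defining inequality \eqref{eq:-24} for membership in $\partial\phi(x)$. Hence $g\in\partial\phi(x)$ and the subdifferential is nonempty. An alternative route is to build the directional derivative $d\mapsto\lim_{t\downarrow0}t^{-1}(\phi(x+td)-\phi(x))$, show it is finite and sublinear using continuity, and apply the analytic Hahn--Banach extension theorem to dominate a linear functional by it; this produces the same subgradient.
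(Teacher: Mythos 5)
Your argument is correct: continuity at $x$ gives the epigraph nonempty interior, the supporting hyperplane at $(x,\phi(x))$ exists by Hahn--Banach, the vertical component $\alpha$ is shown to be strictly negative because $\phi$ is finite everywhere, and normalizing produces a subgradient satisfying \eqref{eq:-24}. The paper itself offers no proof, only a citation to Theorem 16.17(ii) of the Bauschke--Combettes book, and your epigraph-separation argument is precisely the standard proof underlying that cited result, so there is nothing to compare beyond noting that you have supplied the details the paper outsources.
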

\begin{proof}
See Theorem 16.17(ii) in \cite{BC_book}.
\end{proof}

\section{\label{sec3}The general bounded regularity and approximately shrinking
operators}

In this section we consider the notions of bounded regularity and
approximate shrinking which are needed for establishing the strong
convergence of our algorithms.

The property of bounded regularity of a finite family of sets was
studied in \cite[Section 5]{BB96} and \cite{BB93}. Below we expand
its definition to hold for an arbitrary family of sets and show that
such a general boundedly regular family of sets possesses the same
properties as a finite one.
\begin{defn}
\label{bound_reg}For a nonempty index set $I$, the family $\left\{ C_{i}\right\} _{i\in I}$
of nonempty, closed and convex subsets of $\mathcal{H}$ with nonempty
intersection $C$ is \textit{boundedly regular }if for any bounded
sequence $\left\{ x^{k}\right\} _{k=0}^{\infty}$ in $\mathcal{H}$,
the following implication holds:

\textit{
\[
\lim_{k\rightarrow\infty}d\left(x^{k},C_{i}\right)=0\,\,\mathrm{for}\,\,\mathrm{each\,\,}i\in I\,\,\Longrightarrow\,\,\lim_{k\rightarrow\infty}d\left(x^{k},C\right)=0.
\]
}
\end{defn}
The next proposition provides sufficient conditions for bounded regularity.
We recall that a topological space $X$ is locally compact if each
$x\in X$ has a compact neighborhood with respect to the topology
inherited from $X$.
\begin{prop}
\label{bounded_reg}Let $\left\{ C_{i}\right\} _{i\in I}$ be a family
of nonempty, closed and convex subsets of $\mathcal{H}$ with a nonempty
intersection $C$. Then the following assertions hold:
\begin{enumerate}
\item If there is $i_{0}\in I$ for which the set $C_{i_{0}}$ is a locally
compact topological space (with respect to the norm topology inherited
from $\mathcal{H}$), then the family $\left\{ C_{i}\right\} _{i\in I}$
is boundedly regular.
\item If $\mathcal{H}$ is of finite dimension, then the family $\left\{ C_{i}\right\} _{i\in I}$
is boundedly regular.
\end{enumerate}
\end{prop}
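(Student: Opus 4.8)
The plan is to prove both assertions together by reducing (2) to (1): a finite-dimensional Hilbert space is locally compact, so closed subsets are locally compact topological spaces, and hence (2) is the special case of (1) in which every $C_i$ (or at least one, namely any $C_{i_0}$) is locally compact. Thus the whole burden falls on assertion (1), and I would state (2) as an immediate corollary once (1) is established.

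For assertion (1), I would argue by contradiction. Suppose $\left\{x^k\right\}_{k=0}^{\infty}$ is a bounded sequence with $d\left(x^k,C_i\right)\to0$ for every $i\in I$, but $d\left(x^k,C\right)\not\to0$. Passing to a subsequence, we may assume $d\left(x^k,C\right)\ge\eta$ for some $\eta>0$ and all $k$. The key idea is to exploit the single locally compact index $i_0$: since $d\left(x^k,C_{i_0}\right)\to0$, the projections $P_{C_{i_0}}\left(x^k\right)$ satisfy $\left\|x^k-P_{C_{i_0}}\left(x^k\right)\right\|\to0$, and because $\left\{x^k\right\}$ is bounded, so is $\left\{P_{C_{i_0}}\left(x^k\right)\right\}$. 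These projected points all lie in $C_{i_0}$ and, being bounded, lie in a bounded subset of the locally compact space $C_{i_0}$; I would invoke local compactness to conclude that a bounded closed subset of $C_{i_0}$ containing them is compact (in the norm topology), so a subsequence $P_{C_{i_0}}\left(x^{k_j}\right)$ converges strongly to some $c\in C_{i_0}$. Since $\left\|x^{k_j}-P_{C_{i_0}}\left(x^{k_j}\right)\right\|\to0$, we get $x^{k_j}\to c$ strongly as well.

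Now strong convergence of the subsequence does the work. For each fixed $i\in I$ we have $d\left(c,C_i\right)\le\left\|c-x^{k_j}\right\|+d\left(x^{k_j},C_i\right)\to0$, so $d\left(c,C_i\right)=0$, and since $C_i$ is closed this gives $c\in C_i$. As this holds for every $i\in I$, we obtain $c\in\cap_{i\in I}C_i=C$. But then $d\left(x^{k_j},C\right)\le\left\|x^{k_j}-c\right\|\to0$, contradicting $d\left(x^k,C\right)\ge\eta>0$. This contradiction proves $d\left(x^k,C\right)\to0$, establishing bounded regularity.

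The main obstacle I anticipate is the passage from ``$C_{i_0}$ is locally compact'' to ``a bounded subset of $C_{i_0}$ has compact closure,'' which is exactly the step that fails for general (infinite-dimensional) closed convex sets and is the whole point of the local compactness hypothesis. I would handle it by noting that in a locally compact metric space every point has a compact neighborhood, and a standard covering/Heine--Borel-type argument (or the fact that a locally compact convex subset of a normed space is finite-dimensional, hence boundedly compact) upgrades this to boundedly compact behavior for the bounded set at hand; care is needed to confirm that the norm-closure of $\left\{P_{C_{i_0}}\left(x^k\right)\right\}$ stays inside $C_{i_0}$, which follows from closedness of $C_{i_0}$. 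Everything else is routine triangle-inequality estimation once the strongly convergent subsequence is produced.
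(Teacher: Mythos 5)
Your proof is correct and follows essentially the same route as the paper: both argue by contradiction, use the bounded compactness of the locally compact closed convex set $C_{i_0}$ to extract a strongly convergent subsequence of $\left\{ P_{C_{i_0}}\left(x^{k}\right)\right\}$ (hence of $\left\{ x^{k}\right\}$), show the strong limit lies in every $C_{i}$ and thus in $C$, contradict $d\left(x^{k},C\right)\not\rightarrow0$, and reduce part (ii) to part (i) via local compactness of closed sets in finite dimensions. One small caveat: your parenthetical alternative justification that a locally compact convex subset of a normed space is finite-dimensional is false (the Hilbert cube in $\ell_{2}$ is compact, convex and infinite-dimensional), but your primary justification --- that local compactness upgrades to bounded compactness for a closed convex subset of a normed space --- is exactly the fact the paper invokes, so the argument stands.
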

\begin{proof}
\textit{(i)} Let $i_{0}\in I$ be an index for which the set $C_{i_{0}}$
is a locally compact topological space with respect to the norm topology
inherited from $\mathcal{H}$. Let $\left\{ x^{k}\right\} _{k=0}^{\infty}\subset\mathcal{H}$
be a bounded sequence such that 
\begin{equation}
\lim_{k\rightarrow\infty}d\left(x^{k},C_{i}\right)=0\,\,\mathrm{for}\,\,\mathrm{each\,\,}i\in I.\label{eq:-18}
\end{equation}
We need to show that $\lim_{k\rightarrow\infty}d\left(x^{k},C\right)=0.$
Assume to the contrary that this is not true. Then, since the sequence
$\left\{ x^{k}\right\} _{k=0}^{\infty}$ is bounded, we may assume,
without any loss of generality, that $\lim_{k\rightarrow\infty}d\left(x^{k},C\right)$
exists in $\mathbb{R}$,
\begin{equation}
\lim_{k\rightarrow\infty}d\left(x^{k},C\right)\not=0\label{eq:-22}
\end{equation}
and $\left\{ x^{k}\right\} _{k=0}^{\infty}$ converges weakly to some
point $x\in\mathcal{H}$, because otherwise there exists a subsequence
$\left\{ x^{n_{k}}\right\} _{k=0}^{\infty}$ of $\left\{ x^{k}\right\} _{k=0}^{\infty}$
with these properties. By \eqref{eq:-18}, we see that 
\begin{equation}
x^{k}-P_{C_{i_{0}}}\left(x^{k}\right)\rightarrow0.\label{eq:-19}
\end{equation}
It follows from \eqref{eq:-19} that the sequence $\left\{ P_{C_{i_{0}}}\left(x^{k}\right)\right\} _{k=0}^{\infty}\subset C_{i_{0}}$
also converges weakly to $x$ and hence is bounded. Since $C_{i_{0}}$
is closed and convex, its local compactness implies its bounded compactness
(that is, each closed ball in $C_{i_{0}}$, with respect to the norm
topology inherited from $\mathcal{H}$, is compact). Therefore, the
bounded sequence $\left\{ P_{C_{i_{0}}}\left(x^{k}\right)\right\} _{k=0}^{\infty}$
has a subsequence which converges strongly to $x$. Thus, we may assume
that $\left\{ P_{C_{i_{0}}}\left(x^{k}\right)\right\} _{k=0}^{\infty}$
converges strongly to $x$. By \eqref{eq:-19}, the sequence $\left\{ x^{k}\right\} _{k=0}^{\infty}$
also converges strongly to $x$. Now let $i\in I$ and let $\varepsilon>0$
be an arbitrary positive number. Then there exists a $k_{0}\in\mathbb{N}$
such that $\left\Vert x^{k}-x\right\Vert <\varepsilon$ for each natural
$k\ge k_{0}$, and hence, for each $c\in C_{i}$, we obtain, by the
triangle inequality,

\begin{equation}
d\left(x,C_{i}\right)\le\left\Vert x-c\right\Vert \le\left\Vert x-x^{k}\right\Vert +\left\Vert x^{k}-c\right\Vert <\varepsilon+\left\Vert x^{k}-c\right\Vert ,\label{eq:-20}
\end{equation}
for each natural $k\ge k_{0}$. Since \eqref{eq:-20} holds for an
arbitrary $c\in C_{i}$, we obtain, by \eqref{eq:-18}, that for each
natural $k\ge k_{0}$, 
\begin{equation}
d\left(x,C_{i}\right)<\varepsilon+d\left(x^{k},C_{i}\right)\rightarrow\varepsilon.\label{eq:-21}
\end{equation}
The arbitrariness of $\varepsilon$ along with \eqref{eq:-21} imply
that $d\left(x,C_{i}\right)=0$ and since the set $C_{i}$ is closed,
we have $x\in C_{i}$. It follows that $x\in C$ because $i\in I$
is arbitrary. By the continuity of the distance function, 
\[
\lim_{k\rightarrow\infty}d\left(x^{k},C\right)=d\left(x,C\right)=0
\]
which contradicts \eqref{eq:-22}.

\textit{(ii)} Since any closed subset of a finite-dimensional space
is locally compact, the result follows from \textit{(i)}.

This completes the proof of Proposition \ref{bounded_reg}.
\end{proof}
\begin{rem}
Note that the local compactness of the set $C_{i_{0}}$ in Proposition
\ref{bounded_reg}\textit{(i)} can be replaced by the bounded compactness,
since these properties are equivalent for a closed and convex subset
of a normed linear space.
\end{rem}
In the next example we show that the above conditions are sufficient,
but not necessary, for bounded regularity, as well as present an infinite
family of nonempty, closed and convex subsets which is not boundedly
regular. For an example of such a finite family, see, for instance,
Example 5.5 in \cite{BB93}.
\begin{example}
Set $\mathcal{H}:=l_{2}$. Let $\left\{ e^{k}\right\} _{k=0}^{\infty}$
be the sequence of elements in $l_{2}$, defined by $e_{n}^{k}:=\begin{cases}
1, & \mathrm{if}\,\,n=k,\\
0, & \mathrm{otherwise,}
\end{cases}$ for each $n\in\mathbb{N}$. Let $C_{n}:=\overline{\mathrm{sp}\left\{ e^{k}\right\} _{k=n}^{\infty}}$
for each $n\in\mathbb{N}$, where $\mathrm{sp}$ denotes the span
of a set of vectors and the upper bar denotes the closure. Clearly,
$\cap_{n\in\mathbb{N}}C_{n}:$$=\left\{ 0\right\} \not=\emptyset$
and for each $n\in\mathbb{N}$, we have $e^{k}\in C_{n}$ for each
natural $k\ge n$. This implies that $\lim_{k\rightarrow\infty}d\left(e^{k},C_{i}\right)=0$
for each $n\in\mathbb{N}$. However, $\lim_{k\rightarrow\infty}d\left(e^{k},C\right)=1\not=0$
and, therefore, the family $\left\{ C_{n}\right\} _{n\in\mathbb{N}}$
is not boundedly regular.

Observe that if we choose $C_{n}:=\mathrm{sp}\left\{ e^{n}\right\} $
for each $n\in\mathbb{\mathbb{N}}$ in the setting of this example,
then by Proposition \ref{bounded_reg}, the family $\left\{ C_{n}\right\} _{n\in\mathbb{N}}$
is a boundedly regular family of nonempty, closed and convex subsets
of $\mathcal{H}$ with the nonempty intersection $\cap_{n\in\mathbb{N}}C_{n}:$$=\left\{ 0\right\} \not=\emptyset$,
since each $C_{n}$ is a finite-dimensional normed linear space and
hence is locally compact.

Note that the local compactness in Proposition \ref{bounded_reg}
is a sufficient condition for the bounded regularity, but not a necessary
one. For instance, if we choose in the settings of this example $C_{n}:=\overline{B\left(0,\left(n+1\right)^{-1}\right)}$
for each $n\in\mathbb{N}$, then $\left\{ C_{n}\right\} _{n\in\mathbb{N}}$
is a regularly bounded family of nonempty closed and convex subsets
of $\mathcal{H}$ with the nonempty intersection $\cap_{n\in\mathbb{N}}C_{n}=\left\{ 0\right\} $,
but for each $n\in\mathbb{N}$, $C_{n}$ is not a locally compact
subspace of $\mathcal{H}$.
\end{example}
We also recall the following notion of approximate shrinking which
was extensively studied in \cite{CiegZ_app_s}.
\begin{defn}
A quasi-nonexpansive operator $T:\mathcal{H}\rightarrow\mathcal{H}$
\textit{is approximately shrinking} if for each bounded sequence $\left\{ x^{k}\right\} _{k=0}^{\infty}$
in $\mathcal{H}$, the following implication holds:
\[
\lim_{k\rightarrow\infty}\left\Vert T\left(x^{k}\right)-x^{k}\right\Vert =0\,\,\Longrightarrow\,\,\lim_{k\rightarrow\infty}d\left(x^{k},\mathrm{Fix}T\right)=0.
\]
\end{defn}
\begin{example}
\label{ex_as}Given a nonempty, closed and convex subset $C$ of $\mathcal{H}$,
the metric projection $P_{C}$ onto $C$ is approximately shrinking
(see Example 3.5 in \cite{CiegZ_app_s}). However, the quasi-nonexpansive
operator $U:\mathcal{H\rightarrow\mathcal{H}}$ defined by $U\left(x\right):=\begin{cases}
P_{\overline{B\left(0,2\right)}}\left(x\right), & \mathrm{if}\,\,x\not\in\overline{B\left(0,2\right),}\\
P_{\overline{B\left(0,1\right)}}\left(x\right), & \mathrm{if}\,\,x\in\overline{B\left(0,2\right),}
\end{cases}$ for each $x\in\mathcal{H}$ is not an approximately shrinking one
(see Example 3.7 on p. 404 in \cite{CiegZ_app_s}).
\end{example}

\section{\label{sec4}The convergence and bounded perturbation resilience
of the GDSA method}

In this section we consider a sequence $\left\{ T_{\left(\varOmega_{k},\omega_{k}\right)}\right\} _{k=0}^{\infty}$,
defined below, of nonexpansive operators, under the assumption of
an admissible control, with respect to which Algorithm \ref{alg 1}
converges to a point in a certain set and is bounded perturbations
resilient.

\subsection{\label{not}Notions, notations and initial tools\label{subsec:Notions,-notations-and}}

Throughout the rest of the paper we refer, among other things, to
the following setting defined below. We recall that for an arbitrary
sequence of sets $\left\{ A_{k}\right\} _{k=0}^{\infty}$, $\limsup_{k\rightarrow\infty}A_{k}:=\cap_{n=0}^{\infty}\cup_{k=n}^{\infty}A_{k}$.

Let $m$ be a positive integer. We consider a finite family $\left\{ U_{i}\right\} _{i=1}^{m}$
of $\alpha_{i}$-relaxed firmly nonexpansive operators, where $U_{i}:\mathcal{H\rightarrow\mathcal{H}}$
and $\alpha_{i}\in\left(0,2\right]$ for each $i=1,2,\dots,m$. By
Theorem \ref{FNE-equiv.cond}, the operator $U_{i}$ is nonexpansive
for each $i=1,2,\dots,m$. Set $\mathcal{M}:=\max_{k\in\mathbb{N}}q_{k}$
and 
\[
\rho_{\left\{ U_{i}\right\} _{i=1}^{m}}:=\min\left\{ \mathcal{M}^{-1}\min_{i\in\left\{ 1,2,\dots,m\right\} }\left(2-\alpha_{i}\right)\alpha_{i}^{-1},1\right\} \le1.
\]
 Let $\left\{ q_{k}\right\} _{k=0}^{\infty}$ be a bounded sequence
of positive integers, $\left\{ \varOmega_{k}\right\} _{k=0}^{\infty}$
be a family of nonempty sets such that $\varOmega_{k}\subset\left\{ 1,2,\dots,m\right\} {}^{\left\{ 1,2,\dots,q_{k}\right\} }.$
That is, $\varOmega_{k}$ is a finite subset of the set of functions
from $\left\{ 1,2,\dots,q_{k}\right\} $ to $\left\{ 1,2,\dots,m\right\} $
for each $k\in\mathbb{N}$. Since the sequence $\left\{ q_{k}\right\} _{k=0}^{\infty}$
is bounded, the number of different elements in the family $\left\{ \varOmega_{k}\right\} _{k=0}^{\infty}$
is finite. For each $k\in\mathbb{N}$ and each $t\in\varOmega_{k}$,
set $V_{k}\left[t\right]:=U_{t\left(q_{k}\right)}\cdots U_{t\left(2\right)}U_{t\left(1\right)}$
and let $\omega_{k}:\varOmega_{k}\rightarrow\left(0,1\right]$ be
a function such that $\sum_{t\in\varOmega_{k}}\omega_{k}\left(t\right)=1$.
For each $k\in\mathbb{N}$, define $T_{\left(\varOmega_{k},\omega_{k}\right)}:=\sum_{t\in\varOmega_{k}}\omega_{k}\left(t\right)V_{k}\left[t\right]$.
Define the set $I:=\limsup_{k\rightarrow\infty}\left\{ T_{\left(\varOmega_{k},\omega_{k}\right)}\right\} $,
that is, $I=\limsup_{k\rightarrow\infty}A_{k}$, where the sequence
of sets $\left\{ A_{k}\right\} _{k=0}^{\infty}$ is defined by singletones,
$A_{k}:=\left\{ T_{\left(\varOmega_{k},\omega_{k}\right)}\right\} $.
Define a family of sets $\left\{ C_{T}\right\} _{T\in I}$ by $C_{T}:=\mathrm{Fix}T$
for each $T\in I$. Set $F:=\cap_{k=0}^{\infty}\mathrm{Fix}T_{\left(\varOmega_{k},\omega_{k}\right)}$
and $C:=\cap_{T\in I}C_{T}$. For each $k\in\mathbb{N}$, we say that
a set $\varOmega_{k}$ is \textit{fit} if 
\[
\cup_{t\in\varOmega_{k}}\im t=\left\{ 1,2,\dots,m\right\} ,
\]
 where $\im t$ denotes the image of the mapping $t$.
\begin{rem}
The operators $T_{\left(\varOmega_{k},\omega_{k}\right)}$, defined
above, are ``string-averaging operators'' as first introduced in
\cite{CEH2001} and further studied in various forms and settings,
see, for instance, Example 5.21 in \cite{BC_book}, \cite{Kong2019}
and \cite{Nikazad2016}, to name but a few. In those and other papers,
the index vector $t$ is called ``a string'', the composite operator
$V_{k}\left[t\right]$ is called ``a string operator'' and $\omega_{k}$
are called ``weight functions''.

We introduce the following definition of \textit{$\limsup$}-admissibility
of sequences of operators.
\end{rem}
\begin{defn}
\label{def:admis}We say that a sequence $\left\{ T_{k}\right\} _{k=0}^{\infty}$of
operators, $T_{k}:\mathcal{H}\rightarrow\mathcal{H}$ for each $k\in\mathbb{N}$,
is \textit{$\limsup$-admissible}, if $\left\{ T_{k}\right\} _{k=0}^{\infty}\subset\limsup_{k\rightarrow\infty}\left\{ T_{k}\right\} $
and for each $T\in\limsup_{k\rightarrow\infty}\left\{ T_{k}\right\} $,
there is an integer $M_{T}>0$ such that $T\in\cup_{n=k}^{k+M_{T}-1}\left\{ T_{\left(\varOmega_{n},w_{n}\right)}\right\} $
for all $k\in\mathbb{N}$.
\end{defn}
\begin{rem}
\label{rem:admiss}Clearly, for each $k_{0}\in\mathbb{N}$, $\limsup_{k\rightarrow\infty}\left\{ T_{k}\right\} =\limsup_{k\rightarrow\infty}\left\{ T_{k_{0}+k}\right\} $
and if a sequence $\left\{ T_{k}\right\} _{k=0}^{\infty}$ of operators
is $\limsup$-admissible, then the sequence $\left\{ T_{k_{0}+k}\right\} _{k=0}^{\infty}$
is $\limsup$-admissible. We use this observation in the sequel.
\end{rem}
\begin{rem}
Observe that if, in the above setting, we require the sequence $\left\{ \omega_{k}\right\} _{k=0}^{\infty}$
to attain a finite number of values, then the sequence $\left\{ T_{\left(\varOmega_{k},\omega_{k}\right)}\right\} _{k=0}^{\infty}$
will also attain a finite number of values. Thus, in order to ensure
the existence of $k_{0}\in\mathbb{N}$ such that the sequence $\left\{ T_{\left(\varOmega_{k_{0}+k},\omega_{k_{0}+k}\right)}\right\} _{k=0}^{\infty}$
is $\limsup$-admissible, we only need to require the existence of
an integer $M_{T}>0$, for each $T\in I$, such that $T\in\cup_{n=k}^{n+M_{T}-1}\left\{ T_{\left(\varOmega_{n},w_{n}\right)}\right\} $
for all $k\in\mathbb{N}$.
\end{rem}

\subsection{\label{subsec:The-convergent-and}The convergent and bounded perturbation
resilient GDSA method}

In this subsection we present several lemmata leading to our main
result (Theorem \ref{main_res} below) that gives conditions under
which the GDSA method converges and is bounded perturbations resilient.
All notions and notations are those presented in Subsection \ref{subsec:Notions,-notations-and}
above.

We consider the following algorithm which is actually Algorithm \ref{alg 1}
with respect to the operators $\left\{ T_{\left(\varOmega_{k},\omega_{k}\right)}\right\} _{k=0}^{\infty}$
which were defined in the settings of the previous subsection.
\begin{lyxalgorithm}
[The General Dynamic String-Averaging (GDSA) algorithm]\label{alg:GDSA}

Given $\varepsilon\in\left(0,1\right]$, $x^{0}\in\mathcal{H}$ and
a sequence $\left\{ T_{\left(\varOmega_{k},\omega_{k}\right)}\right\} _{k=0}^{\infty}$
of operators, the algorithm is defined by the recurrence
\[
x^{k+1}:=x^{k}+\lambda_{k}\left(T_{\left(\varOmega_{k},\omega_{k}\right)}\left(x^{k}\right)-x^{k}\right),
\]
where $\lambda_{k}\in\left[\varepsilon,1+\rho_{\left\{ U_{i}\right\} _{i=1}^{m}}-\varepsilon\right]$
for each $k\in\mathbb{N}$.
\end{lyxalgorithm}
Next we show that under \textit{$\limsup$}-admissibility Algorithm
\ref{alg:GDSA} is strongly coherent.
\begin{lem}
\label{lem:st}Assume that $\left\{ T_{\left(\varOmega_{k},\omega_{k}\right)}\right\} _{k=0}^{\infty}$
is $\limsup$-admissible and let $x_{0}\in\mathcal{H}$. Then $C=F$
and the sequence generated by Algorithm \ref{alg:GDSA} is strongly
coherent and hence coherent.
\end{lem}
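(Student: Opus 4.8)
The plan is to deduce both assertions from Theorem \ref{thm feas}, applied to the family $\left\{ C_{T}\right\} _{T\in I}$ of fixed-point sets indexed by the $\limsup$-set $I$, together with the control sequence that records, at each step $k$, the single operator actually used. Concretely, I would take $I_{k}:=\left\{ T_{\left(\varOmega_{k},\omega_{k}\right)}\right\} $ for each $k\in\mathbb{N}$; by the first clause of $\limsup$-admissibility (Definition \ref{def:admis}) one has $T_{\left(\varOmega_{k},\omega_{k}\right)}\in I$, so each $I_{k}$ is a singleton subset of $I$. The role of Theorem \ref{thm feas} is then transparent: once I verify its three hypotheses---the inclusion \eqref{eq:-4}, admissibility \eqref{eq:-12-1} of $\left\{ I_{k}\right\} $, and the demiclosedness-type condition \eqref{eq:-10}---it delivers strong coherence of $\left\{ T_{\left(\varOmega_{k},\omega_{k}\right)}\right\} _{k=0}^{\infty}$ directly, and coherence then follows from Remark \ref{rem2.14}.

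First I would settle $C=F$, which simultaneously secures \eqref{eq:-4}. For the inclusion $F\subset C$, note that every $T\in I=\limsup_{k\rightarrow\infty}\left\{ T_{\left(\varOmega_{k},\omega_{k}\right)}\right\} $ equals $T_{\left(\varOmega_{k},\omega_{k}\right)}$ for infinitely many $k$ (the $\limsup$ of singletons is the set of operators occurring infinitely often); hence any $x\in F=\cap_{k=0}^{\infty}\mathrm{Fix}\,T_{\left(\varOmega_{k},\omega_{k}\right)}$ lies in $\mathrm{Fix}\,T=C_{T}$, so $x\in C$. For the reverse inclusion $C\subset F$ I would invoke the first clause of $\limsup$-admissibility again: since $T_{\left(\varOmega_{k},\omega_{k}\right)}\in I$ for every $k$, any $x\in C=\cap_{T\in I}\mathrm{Fix}\,T$ satisfies $x\in\mathrm{Fix}\,T_{\left(\varOmega_{k},\omega_{k}\right)}$ for all $k$, i.e. $x\in F$. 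Thus $C=F$, and in particular \eqref{eq:-4} holds.

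It then remains to check the two structural hypotheses. Each $V_{k}\left[t\right]$ is a composition and each $T_{\left(\varOmega_{k},\omega_{k}\right)}$ a convex combination of the nonexpansive operators $U_{i}$, so by Remark \ref{conv_com_comp} every element of $I$ is nonexpansive; consequently each $C_{T}=\mathrm{Fix}\,T$ is closed and convex (a standard property of fixed-point sets of nonexpansive self-maps of a Hilbert space; see, e.g., \cite{BC_book}), as Theorem \ref{thm feas} requires. The admissibility \eqref{eq:-12-1} of $\left\{ I_{k}\right\} $ is precisely the second clause of Definition \ref{def:admis}: for each $T\in I$ the prescribed $M_{T}$ gives $T\in\cup_{n=k}^{k+M_{T}-1}\left\{ T_{\left(\varOmega_{n},\omega_{n}\right)}\right\} =\cup_{n=k}^{k+M_{T}-1}I_{n}$ for all $k$. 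For \eqref{eq:-10} I would fix $T\in I$, a bounded sequence $\left\{ z^{k}\right\} _{k=0}^{\infty}$, a point $z$, and a strictly increasing $\left\{ n_{k}\right\} _{k=0}^{\infty}$ satisfying the four premises; the premise $T\in I_{n_{k}}$ forces $T=T_{\left(\varOmega_{n_{k}},\omega_{n_{k}}\right)}$, so restricting $T_{\left(\varOmega_{k},\omega_{k}\right)}\left(z^{k}\right)-z^{k}\rightarrow0$ to the indices $n_{k}$ yields $T\left(z^{n_{k}}\right)-z^{n_{k}}\rightarrow0$, while $z^{n_{k}}\rightharpoonup z$. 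Weak regularity of the nonexpansive operator $T$ (Lemma \ref{lemma:3.2.5}) then gives $z\in\mathrm{Fix}\,T=C_{T}$, which is exactly \eqref{eq:-10}. Theorem \ref{thm feas} applies and yields strong coherence, whence coherence by Remark \ref{rem2.14}.

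The argument is essentially a verification, so there is no single hard computation; the points requiring care are the bookkeeping---matching the two clauses of $\limsup$-admissibility to, respectively, the inclusion $C\subset F$ and the admissibility condition \eqref{eq:-12-1}---and the observation that the single-operator control $I_{k}=\left\{ T_{\left(\varOmega_{k},\omega_{k}\right)}\right\} $ is precisely what lets the weak-regularity argument in \eqref{eq:-10} close. I would also flag that the closed-convexity of $C_{T}$ cannot be obtained through the cutter machinery here, since for $\alpha_{i}\in\left(1,2\right]$ the operators $U_{i}$ need not be cutters; it is the general nonexpansive fixed-set property that is used. Finally, nothing in this lemma exploits the firm nonexpansiveness of the $U_{i}$ beyond plain nonexpansiveness---the $\rho$-firm structure and the relaxation range $\left[\varepsilon,1+\rho_{\left\{ U_{i}\right\} _{i=1}^{m}}-\varepsilon\right]$ will enter only later, in the Fej\'er-monotonicity and convergence analysis of Algorithm \ref{alg:GDSA}.
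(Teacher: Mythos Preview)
Your proposal is correct and follows essentially the same route as the paper's proof: define the singleton control $I_{k}:=\{T_{(\varOmega_{k},\omega_{k})}\}$, verify the hypotheses of Theorem~\ref{thm feas} using $\limsup$-admissibility and the weak regularity (via nonexpansiveness) of each $T\in I$, and conclude strong coherence. The only difference is that you spell out the equality $C=F$ and the closed-convexity of $C_{T}$ in detail, whereas the paper simply asserts ``Definitely, $C=F$'' and leaves the latter implicit.
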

\begin{proof}
Definitely, $C=F$. Define a sequence $\left\{ I_{k}\right\} _{k=0}^{\infty}$
of singletons of the elements from the set $I$ by $I_{k}:=\left\{ T_{\left(\varOmega_{k},\omega_{k}\right)}\right\} $
for each $k\in\mathbb{N}$. Since for each $k\in\mathbb{N}$ and each
$T\in I$, we have $T=T_{\left(\varOmega_{m},\omega_{m}\right)}$
for some $m\in\left\{ k,k+1,\dots,k+M_{T}-1\right\} $, it follows
that each $T\in I$ satisfies $T\in\cup{}_{n=k}^{k+M_{T}-1}I_{n}$
for all $k\in\mathbb{N}$. Now assume that $T\in I$, let $z\in\mathcal{H}$,
let $\left\{ z^{k}\right\} _{k=0}^{\infty}$ be a bounded sequence
in $\mathcal{H}$, satisfying
\begin{equation}
z^{k+1}-z^{k}\rightarrow0\,\,\mathrm{and\,\,}T_{\left(\varOmega_{k},\omega_{k}\right)}\left(z^{k}\right)-z^{k}\rightarrow0,\label{eq:}
\end{equation}
and let $\left\{ n_{k}\right\} _{k=0}^{\infty}$ be a strictly increasing
sequence of natural numbers such that $z^{n_{k}}\rightharpoonup z\in\mathcal{H}$
and $T\in I_{n_{k}}$ for all $k\in\mathbb{N}$. Clearly, $T_{\left(\varOmega_{n_{k}},\omega_{n_{k}}\right)}=T$
for all $k\in\mathbb{N}$. By \eqref{eq:}, we have ${\color{blue}T}\mathinner{\color{blue}\left(z^{n_{k}}\right)}-z^{n_{k}}\rightarrow0$
and since $z^{n_{k}}\rightharpoonup z$, it follows from the weak
regularity of the operator $T$ (see Theorem \ref{thm:2.2.5}, Theorem
\ref{FNE-equiv.cond}, Remark \ref{conv_com_comp} and Lemma \ref{lemma:3.2.5}
above) that $z\in\mathrm{Fix}T=C_{T}$. This yields that the condition
in \eqref{eq:-10} holds. Thus, the sequence $\left\{ T_{\left(\varOmega_{k},\omega_{k}\right)}\right\} _{k=0}^{\infty}$
is strongly coherent by Theorem \ref{thm feas} and hence coherent.
This completes the proof of the lemma.
\end{proof}
The next result tells about the $\lambda_{k}$-relaxation of $T_{\left(\varOmega_{k},\omega_{k}\right)}$.
\begin{lem}
\label{lem:relax_output}Let $\left\{ \lambda_{k}\right\} _{k=0}^{\infty}$
be a sequence of real numbers such that $\lambda_{k}\in\left[\varepsilon,1+\rho_{\left\{ U_{i}\right\} _{i=1}^{m}}-\varepsilon\right]$
for each $k\in\mathbb{N}$, where $\varepsilon>0$. Then there exists
a sequence $\left\{ S_{k}\right\} _{k=0}^{\infty}$ of firmly nonexpansive
operators, $S_{k}:\mathcal{H\rightarrow\mathcal{H}}$ for each $k\in\mathbb{N}$,
such that $T_{\left(\varOmega_{k},\omega_{k}\right)\lambda_{k}}$,
the $\lambda_{k}$-relaxation of $T_{\left(\varOmega_{k},\omega_{k}\right)}$,
satisfies
\begin{equation}
T_{\left(\varOmega_{k},\omega_{k}\right)\lambda_{k}}=Id+2\lambda_{k}\left(1+\rho_{\left\{ U_{i}\right\} _{i=1}^{m}}\right)^{-1}\left(S_{k}-Id\right),\label{eq:-23}
\end{equation}
where 
\begin{equation}
2\lambda_{k}\left(1+\rho_{\left\{ U_{i}\right\} _{i=1}^{m}}\right)^{-1}\in\left[2\varepsilon\left(1+\rho_{\left\{ U_{i}\right\} _{i=1}^{m}}\right)^{-1},2-2\varepsilon\left(1+\rho_{\left\{ U_{i}\right\} _{i=1}^{m}}\right)^{-1}\right].\label{eq:-8}
\end{equation}
for each $k\in\mathbb{N}$. Consequently, the operator $T_{\left(\varOmega_{k},\omega_{k}\right)\lambda_{k}}$
is $\left(1+\rho_{\left\{ U_{i}\right\} _{i=1}^{m}}-\lambda_{k}\right)\lambda_{k}^{-1}$-firmly
nonexpansive and, in particular, nonexpansive for each $k\in\mathbb{N}$.
\end{lem}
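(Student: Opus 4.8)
The plan is to track the firm-nonexpansiveness constant of $T_{\left(\varOmega_{k},\omega_{k}\right)}$ through its composition and convex-combination structure, to convert it into a genuinely firmly nonexpansive operator $S_{k}$ via Corollary \ref{cor: frm-nexp}, and then to read off both the relaxation identity \eqref{eq:-23} and the quantitative firm-nonexpansiveness of the relaxation directly from Theorem \ref{FNE_relax_FNE}. Throughout I abbreviate $\rho:=\rho_{\left\{ U_{i}\right\} _{i=1}^{m}}$.

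First I would pin down the constants of the building blocks. Since each $U_{i}$ is an $\alpha_{i}$-relaxation of a firmly nonexpansive operator, Theorem \ref{FNE_relax_FNE} shows that $U_{i}$ is $\left(2-\alpha_{i}\right)\alpha_{i}^{-1}$-firmly nonexpansive. Fixing $k$ and a string $t\in\varOmega_{k}$, the string operator $V_{k}\left[t\right]=U_{t(q_{k})}\cdots U_{t(1)}$ is a composition of $q_{k}$ such operators, so Corollary \ref{comp_conv_comb_firm}\textit{(ii)} makes it $\left(\min_{j}\left(2-\alpha_{t(j)}\right)\alpha_{t(j)}^{-1}\right)q_{k}^{-1}$-firmly nonexpansive. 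The key bookkeeping point is that this constant is bounded below, uniformly in $k$ and $t$, by $\mathcal{M}^{-1}\min_{i}\left(2-\alpha_{i}\right)\alpha_{i}^{-1}\ge\rho$, using $q_{k}\le\mathcal{M}$ and that the minimum over the image of $t$ dominates the minimum over all indices. Since being $\sigma$-firmly nonexpansive is inherited by every smaller constant (the quadratic penalty term only shrinks), each $V_{k}\left[t\right]$ is in particular $\rho$-firmly nonexpansive, and then Corollary \ref{comp_conv_comb_firm}\textit{(i)} promotes the convex combination $T_{\left(\varOmega_{k},\omega_{k}\right)}=\sum_{t\in\varOmega_{k}}\omega_{k}(t)V_{k}\left[t\right]$ to a $\rho$-firmly nonexpansive operator as well.

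With this in hand I would set
\[
S_{k}:=Id+2^{-1}\left(1+\rho\right)\left(T_{\left(\varOmega_{k},\omega_{k}\right)}-Id\right),
\]
so that Corollary \ref{cor: frm-nexp}, applied with $U=T_{\left(\varOmega_{k},\omega_{k}\right)}$, certifies that each $S_{k}$ is firmly nonexpansive. Inverting this relaxation gives $T_{\left(\varOmega_{k},\omega_{k}\right)}=Id+2\left(1+\rho\right)^{-1}\left(S_{k}-Id\right)$, and substituting into the definition of the $\lambda_{k}$-relaxation yields
\[
T_{\left(\varOmega_{k},\omega_{k}\right)\lambda_{k}}=Id+\lambda_{k}\left(T_{\left(\varOmega_{k},\omega_{k}\right)}-Id\right)=Id+2\lambda_{k}\left(1+\rho\right)^{-1}\left(S_{k}-Id\right),
\]
which is exactly \eqref{eq:-23}. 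The range \eqref{eq:-8} of the coefficient $c_{k}:=2\lambda_{k}\left(1+\rho\right)^{-1}$ is then immediate from $\lambda_{k}\in\left[\varepsilon,1+\rho-\varepsilon\right]$: the lower endpoint gives $c_{k}\ge2\varepsilon\left(1+\rho\right)^{-1}$ and the upper endpoint gives $c_{k}\le2\left(1+\rho-\varepsilon\right)\left(1+\rho\right)^{-1}=2-2\varepsilon\left(1+\rho\right)^{-1}$.

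For the final assertion I would note that $T_{\left(\varOmega_{k},\omega_{k}\right)\lambda_{k}}=\left(S_{k}\right)_{c_{k}}$ with $S_{k}$ firmly nonexpansive and $c_{k}\in\left(0,2\right]$, so Theorem \ref{FNE_relax_FNE} makes it $\left(2-c_{k}\right)c_{k}^{-1}$-firmly nonexpansive; a direct simplification gives $\left(2-c_{k}\right)c_{k}^{-1}=\left(1+\rho-\lambda_{k}\right)\lambda_{k}^{-1}$, the claimed constant. Nonexpansiveness then follows because $S_{k}$ is firmly nonexpansive and $c_{k}\in\left[0,2\right]$, whence $\left(S_{k}\right)_{c_{k}}$ is nonexpansive by Theorem \ref{FNE-equiv.cond}. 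I expect the only genuine obstacle to lie in the uniform bookkeeping of the firm-nonexpansiveness constant through the composition: one must invoke $\mathcal{M}=\max_{k}q_{k}$ to obtain a single $\rho$ valid simultaneously for all $k$ and all strings $t$, and use the monotonicity of the $\rho$-firmly nonexpansive property to descend from the a priori larger composition constant down to $\rho$.
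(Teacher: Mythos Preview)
Your proposal is correct and follows essentially the same route as the paper's proof: both establish that each $T_{(\varOmega_{k},\omega_{k})}$ is $\rho_{\{U_{i}\}_{i=1}^{m}}$-firmly nonexpansive via Theorem \ref{FNE_relax_FNE} and Corollary \ref{comp_conv_comb_firm}, define $S_{k}$ identically, apply Corollary \ref{cor: frm-nexp}, and then read off \eqref{eq:-23}, \eqref{eq:-8} and the final firm-nonexpansiveness constant from Theorem \ref{FNE_relax_FNE}. Your explicit remark on the monotonicity of the $\rho$-firm-nonexpansiveness property (to descend to the uniform constant $\rho_{\{U_{i}\}_{i=1}^{m}}$) and your separate invocation of Theorem \ref{FNE-equiv.cond} for nonexpansiveness are details the paper leaves implicit, but the argument is the same.
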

\begin{proof}
By the definition of $\left\{ \lambda_{k}\right\} _{k=0}^{\infty}$,
\eqref{eq:-8} holds. Define a sequence $\left\{ S_{k}\right\} _{k=0}^{\infty}$
of operators by 
\begin{equation}
S_{k}:=Id+2^{-1}\left(1+\rho_{\left\{ U_{i}\right\} _{i=1}^{m}}\right)\left(T_{\left(\varOmega_{k},\omega_{k}\right)}-Id\right)\label{eq:-31}
\end{equation}
for each $k\in\mathbb{N}$. Set $\rho:=\min_{i\in\left\{ 1,2,\dots,m\right\} }\left(2-\alpha_{i}\right)\alpha_{i}^{-1}$.
By Theorem \ref{FNE_relax_FNE}, $U_{i}$ is $\rho$-firmly nonexpansive
for each $i=1,2,\dots,m$. By Corollary \ref{comp_conv_comb_firm}\textit{(ii)},
for each $k\in\mathbb{N}$, the operator $V_{k}\left(t\right)$ is
$\rho_{\left\{ U_{i}\right\} _{i=1}^{m}}$-firmly nonexpansive for
each $t\in\varOmega_{k}$. As a result, by Corollary \ref{comp_conv_comb_firm}\textit{(i)},
$T_{\left(\varOmega_{k},\omega_{k}\right)}$ is $\rho_{\left\{ U_{i}\right\} _{i=1}^{m}}$-firmly
nonexpansive for each $k\in\mathbb{N}$ and by Corollary \ref{cor: frm-nexp}
and by \eqref{eq:-31}, $S_{k}$ is firmly nonexpansive for each $k\in\mathbb{\mathbb{N}}$.
Now $T_{\left(\varOmega_{k},\omega_{k}\right)\lambda_{k}}$ satisfies
\begin{align}
T_{\left(\varOmega_{k},\omega_{k}\right)\lambda_{k}} & =Id+\lambda_{k}\left(T_{\left(\varOmega_{k},\omega_{k}\right)}-Id\right)=Id+2\lambda_{k}\left(1+\rho_{\left\{ U_{i}\right\} _{i=1}^{m}}\right)^{-1}\left(S_{k}-Id\right)\label{eq:-32}
\end{align}
for each $k\in\mathbb{N}$, so \eqref{eq:-23} holds. Hence, by \eqref{eq:-32}
and \eqref{eq:-8}, $T_{\left(\varOmega_{k},\omega_{k}\right)\lambda_{k}}$
is a $2\lambda_{k}\left(1+\rho_{\left\{ U_{i}\right\} _{i=1}^{m}}\right)^{-1}$-relaxed
firmly nonexpansive operator for each $k\in\mathbb{N}$. By Theorem
\ref{FNE_relax_FNE}, $T_{\left(\varOmega_{k},\omega_{k}\right)\lambda_{k}}$
is $\left(1+\rho_{\left\{ U_{i}\right\} _{i=1}^{m}}-\lambda_{k}\right)\lambda_{k}^{-1}$-firmly
nonexpansive and, in particular, nonexpansive for each $k\in\mathbb{N}$,
and the lemma is proved.
\end{proof}
The next theorem is the main result of this paper. It gives conditions
under which our GDSA algorithm converges and is a bounded perturbations
resilient method. Analogous results in a somewhat more general framework
for the consistent case were presented in \cite{MSA}, where assumptions
of a similar nature were made on the input operators of a certain
procedure. This was possible due to the existence of a common fixed
point of those input operators. Since in this work we consider the
inconsistent case, we need to rely on the assumptions regarding the
output operators of our GDSA procedure.
\begin{thm}
\label{main_res}Assume that $C\not=\emptyset$ and that the sequence
$\left\{ T_{\left(\varOmega_{k},\omega_{k}\right)}\right\} _{k=0}^{\infty}$
is $\limsup$-admissible. Let $\left\{ \lambda_{k}\right\} _{k=0}^{\infty}$
be a sequence of real numbers such that $\lambda_{k}\in\left[\varepsilon,1+\rho_{\left\{ U_{i}\right\} _{i=1}^{m}}-\varepsilon\right]$
for each $k\in\mathbb{N}$, where $\varepsilon>0$, and let $x^{0}\in\mathcal{H}$.
Suppose that $\left\{ x^{k}\right\} _{k=0}^{\infty}$ is a sequence
generated by Algorithm \ref{alg:GDSA} with respect to the sequence
$\left\{ \lambda_{k}\right\} _{k=0}^{\infty}$. Then the following
assertions hold:
\begin{enumerate}
\item The sequence $\left\{ x^{k}\right\} _{k=0}^{\infty}$ converges weakly
to a point $x\in C$.
\item The sequence$\left\{ x^{k}\right\} _{k=0}^{\infty}$ is Fej{\'e}r
monotone with respect to $C$, namely,
\[
\left\Vert x^{k+1}-z\right\Vert ^{2}\le\left\Vert x^{k}-z\right\Vert ^{2}-\varepsilon\left(1+\rho_{\left\{ U_{i}\right\} _{i=1}^{m}}-\varepsilon\right)^{-1}\left\Vert \left(x^{k+1}-x^{k}\right)\right\Vert ^{2}
\]
for each $k\in\mathbb{N}$, and, as a result, $\left\Vert x^{k+1}-x^{k}\right\Vert \rightarrow0$.
\item If each $T\in I$ is approximately shrinking and the family $\left\{ C_{T}\right\} _{T\in I}$
is boundedly regular, then $\left\{ x^{k}\right\} _{k=0}^{\infty}$
converges strongly to a point $x\in C$.
\item The sequence$\left\{ x^{k}\right\} _{k=0}^{\infty}$ is weakly (strongly,
if the convergence in (i) is strong) bounded perturbations resilient
with respect to $C$.
\end{enumerate}
\end{thm}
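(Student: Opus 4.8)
The plan is to recast the GDSA recurrence as an instance of Algorithm~\ref{alg 1} driven by \emph{cutters}, so that Theorem~\ref{Main BC} applies; abbreviate $\rho:=\rho_{\left\{ U_{i}\right\} _{i=1}^{m}}$. The obstruction is that the string-averaging operators $T_{\left(\varOmega_{k},\omega_{k}\right)}$ are only $\rho$-firmly nonexpansive with $\rho$ possibly strictly below $1$, hence not cutters in general; this is exactly why the relaxation range in Algorithm~\ref{alg:GDSA} is $\left[\varepsilon,1+\rho-\varepsilon\right]$ rather than $\left[\varepsilon,2-\varepsilon\right]$. To circumvent this I would invoke Lemma~\ref{lem:relax_output}: setting $S_{k}:=Id+2^{-1}\left(1+\rho\right)\left(T_{\left(\varOmega_{k},\omega_{k}\right)}-Id\right)$, the $S_{k}$ are firmly nonexpansive, hence cutters (Theorem~\ref{thm:2.2.5}), with $\mathrm{Fix}S_{k}=\mathrm{Fix}T_{\left(\varOmega_{k},\omega_{k}\right)}$ (Remark~\ref{Relaxation has the same Fix}), and the GDSA step reads $x^{k+1}=x^{k}+\mu_{k}\left(S_{k}\left(x^{k}\right)-x^{k}\right)$ with $\mu_{k}=2\lambda_{k}\left(1+\rho\right)^{-1}$ confined to a closed subinterval of $\left(0,2\right)$ bounded away from the endpoints. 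Since $\left\{ T_{\left(\varOmega_{k},\omega_{k}\right)}\right\} _{k=0}^{\infty}$ is strongly coherent, hence coherent, by Lemma~\ref{lem:st}, and $S_{k}$ is its $2^{-1}\left(1+\rho\right)$-relaxation with $2^{-1}\left(1+\rho\right)\in\left[1/2,1\right]$, Lemma~\ref{proposition 4.5 } transfers coherence to $\left\{ S_{k}\right\} _{k=0}^{\infty}$. As $\cap_{k}\mathrm{Fix}S_{k}=F=C\neq\emptyset$ (Lemma~\ref{lem:st}), Theorem~\ref{Main BC} yields assertion~(i): weak convergence to some $x\in C$.

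For~(ii) I would use that, by Lemma~\ref{lem:relax_output}, the operator $T_{\left(\varOmega_{k},\omega_{k}\right)\lambda_{k}}$ sending $x^{k}$ to $x^{k+1}$ is $\left(1+\rho-\lambda_{k}\right)\lambda_{k}^{-1}$-firmly nonexpansive, hence $\left(1+\rho-\lambda_{k}\right)\lambda_{k}^{-1}$-strongly quasi-nonexpansive (Remark~\ref{FNE-SQNE}). Evaluating this at any $z\in C\subseteq\mathrm{Fix}T_{\left(\varOmega_{k},\omega_{k}\right)}$ gives
\[
\left\Vert x^{k+1}-z\right\Vert ^{2}\le\left\Vert x^{k}-z\right\Vert ^{2}-\left(1+\rho-\lambda_{k}\right)\lambda_{k}^{-1}\left\Vert x^{k+1}-x^{k}\right\Vert ^{2}.
\]
Since $\left(1+\rho-\lambda_{k}\right)\lambda_{k}^{-1}=\left(1+\rho\right)\lambda_{k}^{-1}-1$ is decreasing in $\lambda_{k}$ and $\lambda_{k}\le1+\rho-\varepsilon$, its value is at least $\varepsilon\left(1+\rho-\varepsilon\right)^{-1}$, which yields the stated inequality and Fej\'er monotonicity with respect to $C$. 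Telescoping this inequality makes $\sum_{k}\left\Vert x^{k+1}-x^{k}\right\Vert ^{2}$ finite, whence $\left\Vert x^{k+1}-x^{k}\right\Vert \to0$.

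Assertion~(iii) carries the real work. As $C$ is a nonempty intersection of the closed convex sets $C_{T}=\mathrm{Fix}T$ (fixed-point sets of the nonexpansive $T\in I$), Theorem~\ref{Fejer} reduces strong convergence to $d\left(x^{k},C\right)\to0$, and bounded regularity of $\left\{ C_{T}\right\} _{T\in I}$ reduces that to $d\left(x^{k},C_{T}\right)\to0$ for each fixed $T\in I$; by approximate shrinking of $T$ it then suffices to show $\left\Vert T\left(x^{k}\right)-x^{k}\right\Vert \to0$ along the \emph{full} sequence. The difficulty is that the recurrence sees a given $T$ only intermittently: $\limsup$-admissibility merely furnishes, for each $k$, an index $n_{k}\in\left\{ k,\dots,k+M_{T}-1\right\}$ with $T_{\left(\varOmega_{n_{k}},\omega_{n_{k}}\right)}=T$. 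I would bridge this with a window/telescoping estimate. From~(ii) and $\lambda_{k}\ge\varepsilon$ one has $\left\Vert T\left(x^{n_{k}}\right)-x^{n_{k}}\right\Vert \to0$, while $\left\Vert x^{k}-x^{n_{k}}\right\Vert \le\sum_{j=k}^{n_{k}-1}\left\Vert x^{j+1}-x^{j}\right\Vert \le M_{T}\max_{k\le j<k+M_{T}}\left\Vert x^{j+1}-x^{j}\right\Vert \to0$. Feeding these, together with nonexpansivity of $T$, into $\left\Vert T\left(x^{k}\right)-x^{k}\right\Vert \le\left\Vert x^{k}-x^{n_{k}}\right\Vert +\left\Vert T\left(x^{n_{k}}\right)-x^{n_{k}}\right\Vert +\left\Vert x^{n_{k}}-x^{k}\right\Vert$ forces $\left\Vert T\left(x^{k}\right)-x^{k}\right\Vert \to0$; boundedness of the Fej\'er-monotone $\left\{ x^{k}\right\}$ then lets approximate shrinking conclude.

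Finally, for~(iv) I would apply Theorem~\ref{error} to the nonexpansive operators $S_{k}=T_{\left(\varOmega_{k},\omega_{k}\right)\lambda_{k}}$, which satisfy $C\subseteq\cap_{k}\mathrm{Fix}S_{k}$. Its tail hypothesis, weak (strong) convergence of $\left\{ S_{q+k}\cdots S_{q}\left(y\right)\right\} _{k=0}^{\infty}$ to a point of $C$ for every $y$ and $q$, follows from~(i) (respectively~(iii)) applied to the shifted sequence $\left\{ T_{\left(\varOmega_{q+k},\omega_{q+k}\right)}\right\} _{k=0}^{\infty}$, which is again $\limsup$-admissible with the same limit set $I$, and hence the same $C$, by Remark~\ref{rem:admiss}. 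For a perturbed run $y^{k+1}=S_{k}\left(y^{k}+\beta_{k}v^{k}\right)$, nonexpansivity gives $\left\Vert y^{k+1}-S_{k}\left(y^{k}\right)\right\Vert \le\beta_{k}\left\Vert v^{k}\right\Vert$, a summable error since $\left\{ v^{k}\right\}$ is bounded and $\sum_{k}\beta_{k}<\infty$, so Theorem~\ref{error} delivers weak (strong) convergence of $\left\{ y^{k}\right\}$ to a point of $C$ for every $y^{0}$, i.e.\ bounded perturbations resilience with respect to $C$. The main obstacle throughout is the possibility $\rho<1$, which blocks a direct cutter argument and is resolved once and for all by the $S_{k}$-reformulation, together with the intermittency bookkeeping in~(iii).
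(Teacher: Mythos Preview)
Your proposal is correct and follows essentially the same route as the paper: the $S_{k}$-reformulation via Lemma~\ref{lem:relax_output} plus Lemma~\ref{lem:st} and Theorem~\ref{Main BC} for (i), the $\rho$-firm nonexpansivity bound for (ii), the $\limsup$-admissibility window argument combined with approximate shrinking, bounded regularity and Theorem~\ref{Fejer} for (iii), and Theorem~\ref{error} together with Remark~\ref{rem:admiss} for (iv). The only cosmetic difference is in (iii): the paper applies approximate shrinking to the shifted subsequence $\{x^{k+l_{k}}\}$ and then transfers $d(x^{k+l_{k}},C_{T})\to 0$ to $d(x^{k},C_{T})\to 0$ via the distance triangle inequality, whereas you first push $\|T(x^{k})-x^{k}\|\to 0$ along the full sequence using nonexpansivity of $T$ and then invoke approximate shrinking once; both variants are valid.
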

\begin{proof}
\textit{(i)} By Lemma \ref{lem:st}, $F=C\not=\emptyset$ and the
sequence $\left\{ T_{\left(\varOmega_{k},\omega_{k}\right)}\right\} _{k=0}^{\infty}$
is strongly coherent. By Lemma \ref{lem:relax_output}, there exists
a sequence $\left\{ S_{k}\right\} _{k=0}^{\infty}$ of firmly nonexpansive
operators, $S_{k}:\mathcal{H\rightarrow\mathcal{H}}$ for each $k\in\mathbb{N}$,
such that \eqref{eq:-23} and \eqref{eq:-8} hold for each $k\in\mathbb{N}$.
Hence the sequence $\left\{ S_{k}\right\} _{k=0}^{\infty}$ is also
strongly coherent. Now we apply \eqref{eq:-23}, \eqref{eq:-8}, Remark
\ref{Relaxation has the same Fix}, Theorem \ref{thm:2.2.5} and Theorem
\ref{Main BC} to deduce that the sequence $\left\{ x^{k}\right\} _{k=0}^{\infty}$,
generated by Algorithm \ref{alg:GDSA}, with respect to the sequence
$\left\{ \lambda_{k}\right\} _{k=0}^{\infty}\subset\left[\varepsilon,1+\rho_{\left\{ U_{i}\right\} _{i=1}^{m}}-\varepsilon\right]$
converges weakly to a point $x\in C$.

\textit{(ii)} Let $k\in\mathbb{N}$ be a natural number and $z\in C$.
By the definition of $\left\{ x^{k}\right\} _{k=0}^{\infty}$, Remark
\ref{Relaxation has the same Fix}, since $C\subset\mathrm{Fix}T_{\left(\varOmega_{k},\omega_{k}\right)\lambda_{k}}$
and since (by Lemma \ref{lem:relax_output}) $T_{\left(\varOmega_{k},\omega_{k}\right)\lambda_{k}}$
is $\left(1+\rho_{\left\{ U_{i}\right\} _{i=1}^{m}}-\lambda_{k}\right)\lambda_{k}^{-1}$-firmly
nonexpansive operator (and in particular, a nonexpansive one), we
have
\begin{align}
\left\Vert x^{k+1}-z\right\Vert ^{2} & =\left\Vert T_{\left(\varOmega_{k},\omega_{k}\right)\lambda_{k}}\left(x^{k}\right)-T_{\left(\varOmega_{k},\omega_{k}\right)\lambda_{k}}z\right\Vert ^{2}\le\left\Vert x^{k}-z\right\Vert ^{2}\nonumber \\
 & -\left(1+\rho_{\left\{ U_{i}\right\} _{i=1}^{m}}-\lambda_{k}\right)\lambda_{k}^{-1}\left\Vert \left(x^{k}-T_{\left(\varOmega_{k},\omega_{k}\right)\lambda_{k}}\left(x^{k}\right)\right)\right\Vert ^{2}\label{eq:-9}\\
 & \le\left\Vert x^{k}-z\right\Vert ^{2}-\varepsilon\left(1+\rho_{\left\{ U_{i}\right\} _{i=1}^{m}}-\varepsilon\right)^{-1}\left\Vert \left(x^{k+1}-x^{k}\right)\right\Vert ^{2}.\nonumber 
\end{align}
As a result, $\left\{ x^{k}\right\} _{k=0}^{\infty}$ is Fej{\'e}r
monotone with respect to $C$. Since the real sequence\\
$\left\{ \left\Vert x^{k}-z\right\Vert ^{2}\right\} _{k=0}^{\infty}$
is monotone decreasing and bounded from below by $0$, it converges
and \eqref{eq:-9} now implies that $\left\Vert x^{k+1}-x^{k}\right\Vert \rightarrow0$.

\textit{(iii)} Assume that each $T\in I$ is approximately shrinking
and that the family $\cap_{T\in I}C_{T}$ is boundedly regular. Let
$T\in I$. Clearly, there is a sequence $\left\{ l_{k}\right\} _{k=0}^{\infty}\subset\left\{ 0,1,\dots,M_{T}-1\right\} $
such that $T=T_{\left(\varOmega_{k+l_{k}},\omega_{k+l_{k}}\right)}$
for each $k\in\mathbb{N}$. Then, 
\[
\lambda_{k+l_{k}}\left(T\left(x^{k+l_{k}}\right)-x^{k+l_{k}}\right)=T_{\lambda_{k+l_{k}}}\left(x^{k+l_{k}}\right)-x^{k+l_{k}}=x^{k+l_{k}+1}-x^{k+l_{k}}
\]
for each $k\in\mathbb{N}$. This, combined with \textit{(ii)} implies,
in its turn (since\\
$\left\{ \lambda_{k+l_{k}}\right\} _{k=0}^{\infty}\subset\left[\varepsilon,1+\rho_{\left\{ U_{i}\right\} _{i=1}^{m}}-\varepsilon\right]$),
that $\lim_{k\rightarrow\infty}\left\Vert T\left(x^{k+l_{k}}\right)-x^{k+l_{k}}\right\Vert =0$.
Since $T$ is approximately shrinking and the sequence $\left\{ x^{k+l_{k}}\right\} _{k=0}^{\infty}$
is bounded (because it is weakly convergent by \textit{(i)}), it follows
that
\begin{equation}
\lim_{k\rightarrow\infty}d\left(x^{k+l_{k}},C_{T}\right)=0.\label{eq:-33}
\end{equation}
Now for each $k\in\mathbb{N}$,
\begin{equation}
d\left(x^{k},C_{T}\right)\le\left\Vert x^{k}-x^{k+l_{k}}\right\Vert +d\left(x^{k+l_{k}},C_{T}\right)\label{eq:-12}
\end{equation}
and 
\begin{equation}
\left\Vert x^{k}-x^{k+l_{k}}\right\Vert =\left\Vert \sum_{i=0}^{l_{k}-1}x^{k+i}-x^{k+i+1}\right\Vert \le\sum_{i=0}^{l_{k}-1}\left\Vert x^{k+i}-x^{k+i+1}\right\Vert \label{eq:-13}
\end{equation}
(By definition $\sum_{i=0}^{-1}\left\Vert x^{k+i}-x^{k+i+1}\right\Vert :=0$).
Combining \eqref{eq:-33}, \eqref{eq:-12} and \eqref{eq:-13} with
\textit{(ii)}, we get, due to the finite number of summands in \eqref{eq:-13},
$\lim_{k\rightarrow\infty}d\left(x^{k},C_{T}\right)=0$. Now the bounded
regularity of the family $\left\{ C_{T}\right\} _{T\in I}$ implies
that $\lim_{k\rightarrow\infty}d\left(x^{k},C\right)=0$. By \textit{(ii)}
and Theorem \ref{Fejer}, the sequence $\left\{ x^{k}\right\} _{k=0}^{\infty}$
converges strongly to a point $x\in C$.

\textit{(iv)} Clearly, $x^{k+1}=T_{\left(\varOmega_{k},\omega_{k}\right)\lambda_{k}}\left(x^{k}\right)$
for each $k\in\mathbb{N}$. Consider bounded perturbations by letting
$\left\{ \beta_{k}\right\} _{k=0}^{\infty}$ be a sequence of positive
real numbers such that $\sum_{k=0}^{\infty}\beta_{k}<\infty$ and
letting $\{v^{k}\}_{k=0}^{\infty}$ be a bounded sequence in $\mathcal{H}$.
Assume that $y^{0}\in\mathcal{H}$ and consider the sequence $\left\{ y^{k}\right\} _{k=0}^{\infty}$
generated by the iterative process $y^{k+1}:=T_{\left(\varOmega_{k},\omega_{k}\right)\lambda_{k}}(y^{k}+\beta_{k}v^{k})$.
Suppose that $q\in\mathbb{N}$ and $y\in\mathcal{H}$ are arbitrary.
For each $k\in\mathbb{N}$, set $\gamma_{k}:=\beta_{k}\left\Vert v^{k}\right\Vert \in\left[0,\infty\right)$,
$\lambda_{k}^{\prime}:=\lambda_{q+k}\in\left[\varepsilon,1+\rho_{\left\{ U_{i}\right\} _{i=1}^{m}}-\varepsilon\right]$,
$\varOmega_{k}^{\prime}:=\varOmega_{q+k}$ and $\omega_{k}^{\prime}:=\omega_{q+k}$.
Since by Remark \ref{rem:admiss}, the sequence $\left\{ T_{\left(\varOmega_{k}^{\prime},\omega_{k}^{\prime}\right)}\right\} _{k=0}^{\infty}$
is $\limsup$-admissible and 
\[
C=\cap_{T\in\limsup_{k\rightarrow\infty}\left\{ T_{\left(\varOmega_{k},\omega_{k}\right)}\right\} }C_{T}=\cap_{T\in\limsup_{k\rightarrow\infty}\left\{ T_{\left(\varOmega_{k}^{\prime},\omega_{k}^{\prime}\right)}\right\} }C_{T},
\]
it follows from \textit{(i)} that the sequence $\left\{ x^{\prime k}\right\} _{k=0}^{\infty}$
generated by Algorithm \ref{alg:GDSA} with respect to the sequences
$\left\{ \lambda_{k}^{\prime}\right\} _{k=0}^{\infty}$ and $\left\{ T_{\left(\varOmega_{k}^{\prime},\omega_{k}^{\prime}\right)}\right\} _{k=0}^{\infty}$,
where $x^{\prime0}=y$, converges weakly to a point $x^{\prime}\in C$.
Then, for each $k\in\mathbb{N}$, we have 
\[
x^{\prime k+1}=T_{\left(\varOmega_{k}^{\prime},\omega_{k}^{\prime}\right)\lambda_{k}^{\prime}}\cdots T_{\left(\varOmega_{1}^{\prime},\omega_{1}^{\prime}\right)\lambda_{1}^{\prime}}T_{\left(\varOmega_{0}^{\prime},\omega_{0}^{\prime}\right)\lambda_{0}^{\prime}}\left(y\right)=T_{\left(\varOmega_{q+k},\omega_{q+k}\right)\lambda_{q+k}}\cdots T_{\left(\varOmega_{q+1},\omega_{q+1}\right)\lambda_{q+1}}T_{\left(\varOmega_{q},\omega_{q}\right)\lambda_{q}}\left(y\right)
\]
and hence the sequence $\left\{ T_{\left(\varOmega_{q+k},\omega_{q+k}\right)\lambda_{q+k}}\cdots T_{\left(\varOmega_{q+1},\omega_{q+1}\right)\lambda_{q+1}}T_{\left(\varOmega_{q},\omega_{q}\right)\lambda_{q}}\left(y\right)\right\} _{k=0}^{\infty}$
converges weakly to an element of $C$ for any arbitrary $y\in\mathcal{H}$.
Since for each $k\in\mathbb{N}$, the operator $T_{\left(\varOmega_{k},\omega_{k}\right)\lambda_{k}}$
is nonexpansive, by Lemma \ref{lem:relax_output}, we obtain
\begin{align*}
\left\Vert y^{k+1}-T_{\left(\varOmega_{k},\omega_{k}\right)\lambda_{k}}\left(y^{k}\right)\right\Vert  & =\left\Vert T_{\left(\varOmega_{k},\omega_{k}\right)\lambda_{k}}(y^{k}+\beta_{k}v^{k})-T_{\left(\varOmega_{k},\omega_{k}\right)\lambda_{k}}\left(y^{k}\right)\right\Vert \le\beta_{k}\left\Vert v^{k}\right\Vert =\gamma_{k}.
\end{align*}
We have, 
\[
\cap_{k=0}^{\infty}\mathrm{Fix}T_{\left(\varOmega_{k},\omega_{k}\right)\lambda_{k}}=\cap_{k=0}^{\infty}\mathrm{Fix}T_{\left(\varOmega_{k},\omega_{k}\right)}=C
\]
by Remark \ref{Relaxation has the same Fix} and since the sequence
$\left\{ T_{\left(\varOmega_{k},\omega_{k}\right)}\right\} _{k=0}^{\infty}$
is $\limsup$-admissible. Observe that $\sum_{k=0}^{\infty}\gamma_{k}<\infty$
because the sequence $\{v^{k}\}_{k=0}^{\infty}$ is bounded. We now
deduce from Theorem \ref{error} that the sequence $\left\{ y^{k}\right\} _{k=0}^{\infty}$
converges weakly to an element of $C$ as well, proving that the sequence
$\left\{ x^{k}\right\} _{k=0}^{\infty}$ is weakly bounded perturbations
resilient with respect to $C$. If the convergence of $\left\{ x^{k}\right\} _{k=0}^{\infty}$
is strong, then, again by Theorem \ref{error}, $\left\{ x^{k}\right\} _{k=0}^{\infty}$
is strongly bounded perturbations resilient with respect to $C$.
This completes the proof of the theorem.
\end{proof}
\renewcommand\theenumi{(\alph{enumi})}
\begin{rem}
\label{4.10}~
\begin{enumerate}
\item If there exists a $k_{0}\in\mathbb{N}$ such that the sequence $\left\{ T_{\left(\varOmega_{k_{0}+k},\omega_{k_{0}+k}\right)}\right\} _{k=0}^{\infty}$
is $\limsup$-admissible, then by Remark \ref{rem:admiss}, the statements
of Theorem \ref{main_res} remain true with the only following change
in \textit{(ii)}, wherein the sequence $\left\{ x_{k}\right\} _{k=k_{0}}^{\infty}$
(instead of $\left\{ x^{k}\right\} _{k=0}^{\infty}$) is Fej{\'e}r
monotone with respect to $C$, namely,
\[
\left\Vert x^{k+1}-z\right\Vert ^{2}\le\left\Vert x^{k}-z\right\Vert ^{2}-\varepsilon\left(1+\rho_{\left\{ U_{i}\right\} _{i=1}^{m}}-\varepsilon\right)^{-1}\left\Vert \left(x^{k+1}-x^{k}\right)\right\Vert ^{2}
\]
for each each natural $k\ge k_{0}$.
\item In particular, if $U_{i}$ is a $2\left(\mathcal{M}+1\right)^{-1}$-relaxed
firmly nonexpansive (that is, $\mathcal{M}$-firmly nonexpansive by
Theorem \ref{FNE_relax_FNE}) operator for each $i=1,2,\dots,m$,
then $\rho_{\left\{ U_{i}\right\} _{i=1}^{m}}=1$ and $\lambda_{k}\in\left[\varepsilon,2-\varepsilon\right]$
for each $k\in\mathbb{N}$. If $U_{i}$ is a nonexpansive (that is,
$2$-relaxed firmly nonexpansive by Corollary \ref{cor: frm-nexp})
for each $i=1,2,\dots,m$, then $\rho_{\left\{ U_{i}\right\} _{i=1}^{m}}=0$
and $\lambda_{k}\in\left[\varepsilon,1-\varepsilon\right]$ for each
$k\in\mathbb{N}$.
\item If the space $\mathcal{H}$ is of a finite dimension, then the convergence
in \textit{(i)} is strong.
\end{enumerate}
\end{rem}
\renewcommand\theenumi{(\roman{enumi})}
\begin{example}
\label{SimPro}Let $\varepsilon>0$ be a real number. For all $k\in\mathbb{N}$,
set $q_{k}:=1$, $\varOmega_{k}:=\left\{ 1,2,\dots,m\right\} {}^{\left\{ 1,2,\dots,q_{k}\right\} }$
and $\omega_{k}:=\overline{\omega}$ for a fixed $\overline{\omega}:\left\{ 1,2,\dots,m\right\} {}^{\left\{ 1,2,\dots,q_{k}\right\} }\rightarrow\left(0,1\right]$.
Clearly, for each $i\in\left\{ 1,2,\dots,m\right\} $, there is a
unique string ${\color{blue}t^{i}}\in\left\{ 1,2,\dots,m\right\} {}^{\left\{ 1,2,\dots,q_{k}\right\} }$
such that $t^{i}\left(1\right)=i$. Hence we can define the mapping
$\omega:\left\{ 1,2,\dots,m\right\} \rightarrow\left\{ 1,2,\dots,m\right\} {}^{\left\{ 1,2,\dots,q_{k}\right\} }$
by $\omega_{i}:=\overline{\omega}\left(t^{i}\right)$ for each $i\in\left\{ 1,2,\dots,m\right\} $.
In this case Algorithm \ref{alg:GDSA} with the above provisions provides
a fully-simultaneous method, that is, 
\begin{equation}
T_{\left(\varOmega_{k},\omega_{k}\right)}=\sum_{t\in\varOmega_{k}}\overline{\omega}\left(t\right)U_{t\left(1\right)}=\sum_{i=1}^{m}\overline{\omega}\left(t^{i}\right)U_{t^{i}\left(1\right)}=\sum_{i=1}^{m}\omega_{i}U_{i}\label{eq:-34}
\end{equation}
 for each $k\in\mathbb{N}$, and $C=F=\mathrm{Fix}\sum_{i=1}^{m}\omega_{i}U_{i}$.
We see from \eqref{eq:-34} that the sequence $\left\{ T_{\left(\varOmega_{k},\omega_{k}\right)}\right\} _{k=0}^{\infty}$
is $\limsup$-admissible. Hence, under the assumption that $\mathrm{Fix}\sum_{i=1}^{m}\omega_{i}U_{i}\not=\emptyset$
we obtain, by Theorem \ref{main_res}, the weak convergence of this
fully-simultaneous method, with parameters $\left\{ \lambda_{k}\right\} _{k=0}^{\infty}\subset\left[\varepsilon,1+\rho_{\left\{ U_{i}\right\} _{i=1}^{m}}-\varepsilon\right]$,
to a point in $C$. In particular, when $U_{i}=P_{C_{i}}$, where
$C_{i}$ is a nonempty, closed and convex subset of $\mathcal{H}$
for each $i=1,2,\dots,m$ , we obtain the well-known simultaneous
projection method, see, for example, \cite[Subsection 5.4]{C_book}.
In this case $\rho_{\left\{ U_{i}\right\} _{i=1}^{m}}=1$ (by Example
\ref{metric projection} and Remark \ref{4.10}(b)), $I=\left\{ \sum_{i=1}^{m}\omega_{i}P_{C_{i}}\right\} $
and 
\begin{equation}
C=F=\mathrm{Fix}\sum_{i=1}^{m}\omega_{i}P_{C_{i}}=\underset{x\in\mathcal{H}}{\mathrm{Argmin}}f\left(x\right),\label{eq:-5}
\end{equation}
where $f:\mathcal{H}\rightarrow\mathbb{R}$ is a, so called, proximity
function defined by $f:=2^{-1}\sum_{i=1}^{m}\omega_{i}\left\Vert P_{C_{i}}-Id\right\Vert ^{2}$.
For the proof of the last equality in \eqref{eq:-5}, see, for instance,
Theorem 4.4.6 in \cite{C_book}. By Theorem \ref{main_res}\textit{(i)},
the simultaneous projection method with parameters $\left\{ \lambda_{k}\right\} _{k=0}^{\infty}\subset\left[\varepsilon,2-\varepsilon\right]$
converges weakly to a point in $\underset{x\in\mathcal{H}}{\mathrm{Argmin}}f\left(x\right)$.
If, in addition, the operator $\sum_{i=1}^{m}\omega_{i}P_{C_{i}}$
is approximately shrinking, then (since the family $\left\{ C_{T}\right\} _{T\in I}=\left\{ C\right\} $
is boundedly regular), this convergence is strong by Theorem \ref{main_res}\textit{(ii)}.
\end{example}

\section{\label{sec5}Application of the GDSA method to the Superiorization
Methodology}

In this section we introduce the superiorized version of Algorithm
\ref{alg:GDSA} with respect to a convex and continuous objective
function $\phi:\mathcal{H}\rightarrow\mathbb{R}$ and sequences $\left\{ T_{\left(\varOmega_{k},\omega_{k}\right)}\right\} _{n=0}^{\infty}$
and $\left\{ \lambda_{k}\right\} _{k=0}^{\infty}\subset\left[\varepsilon,1+\rho_{\left\{ U_{i}\right\} _{i=1}^{m}}-\varepsilon\right]$,
where $\varepsilon>0$, defined in Section \ref{sec4}. We investigate
its convergence properties in the framework of the Superiorization
Methodology. This provides a generalization of Algorithm 4.1, presented
in \cite{CZ_sup} and applies to the results below concerning the
behavior of the superiorized version of Algorithm \ref{alg:GDSA}.

\subsection{A few words about the Superiorization Methodology\label{subsec:A-few-words}}

In this subsection we recall the brief description of the superiorization
methodology (SM), quoted from the preface to the 2017 special issue
\cite{CHJ-special-issue-2017} on ``Superiorization: Theory and Applications''.
``The superiorization methodology is used for improving the efficacy
of iterative algorithms whose convergence is resilient to certain
kinds of perturbations. Such perturbations are designed to ``force''
the perturbed algorithm to produce more useful results for the intended
application than the ones that are produced by the original iterative
algorithm. The perturbed algorithm is called the ``superiorized version''
of the original unperturbed algorithm. If the original algorithm is
computationally efficient and useful in terms of the application at
hand and if the perturbations are simple and not expensive to calculate,
then the advantage of this method is that, for essentially the computational
cost of the original algorithm, we are able to get something more
desirable by steering its iterates according to the designed perturbations.
This is a very general principle that has been used successfully in
some important practical applications, especially for inverse problems
such as image reconstruction from projections, intensity-modulated
radiation therapy and nondestructive testing, and awaits to be implemented
and tested in additional fields.''

Further information and references on the SM can be found in papers
listed in the bibliographic collection on the dedicated Webpage \cite{SM-bib-page}.
For recent works that include introductory material on the SM see,
for example, \cite{asymmetric-2023}, \cite{erturk-salim-2023}, \cite{humphries-2022}
and \cite{Torregrosa-2024}.

\subsection{The superiorized version of the GDSA algorithm\label{subsec:The-superiorized-version}}

In Subsection \ref{not} we defined $I:=\limsup_{k\rightarrow\infty}\left\{ T_{\left(\varOmega_{k},\omega_{k}\right)}\right\} $
and the family $\left\{ C_{T}\right\} _{T\in I}$ with the intersection
$C:=\cap_{T\in I}C_{T}$. Let $\phi:\mathcal{H}\rightarrow\mathbb{R}$
be a convex and continuous real valued objective function. Define
the constrained minimum set by 
\[
C_{\min}:=\mathrm{Argmin}\{\phi\left(x\right)\mid x\in C\}.
\]

\begin{lyxalgorithm}
\label{supGDSA}[The superiorized version of the GDSA algorithm]
Given $y^{0}\in\mathcal{H}$, a sequence $\left\{ N_{k}\right\} _{k=0}^{\infty}$
of positive integers, a sequence $\left\{ \lambda_{k}\right\} _{k=0}^{\infty}$
of positive numbers and a family of positive real sequences $\left\{ \left\{ \beta_{k,n}\right\} _{n=1}^{N_{k}}\right\} _{k=0}^{\infty}$
such that $\sum_{k=0}^{\infty}\sum_{n=1}^{N_{k}}\beta_{k,n}<\infty$,
the algorithm is defined by the recurrences
\[
y^{k+1}:=y^{k}+\sum_{n=1}^{N_{k}}\beta_{k,n}v^{k,n}+\lambda_{k}\left(T_{\left(\varOmega_{k},\omega_{k}\right)}\left(y^{k}+\sum_{n=1}^{N_{k}}\beta_{k,n}v^{k,n}\right)-y^{k}-\sum_{n=1}^{N_{k}}\beta_{k,n}v^{k,n}\right)
\]
\[
\mathrm{wherein}
\]
\begin{equation}
v^{k,n+1}:=\begin{cases}
-\left\Vert s^{k,n}\right\Vert ^{-1}s^{k,n}, & \mathrm{if}\,\,0\not\in\partial\phi\left(y^{k}+\sum_{i=1}^{n}\beta_{k,i}v^{k,i}\right),\\
0, & \mathrm{if}\,\,0\in\partial\phi\left(y^{k}+\sum_{i=1}^{n}\beta_{k,i}v^{k,i}\right),
\end{cases}\label{eq:-6}
\end{equation}
for each $k\in\mathbb{N}$ and each $n=0,1,\dots,N_{k}-1$, where
$s^{k,n}$ is a selection of the subgradient $\partial\phi\left(y^{k}+\sum_{i=1}^{n}\beta_{k,i}v^{k,i}\right)$
(which exists by Theorem \ref{subdiff_ne+b}) for each $k\in\mathbb{N}$
and each $n=0,1,\dots,N_{k}-1$ (recalling that, by definition, $\sum_{i=1}^{0}\beta_{k,i}v^{k,i}:=0$).
\end{lyxalgorithm}
It is true that, under the assumptions of Theorem \ref{main_res},
where, in particular, $\left\{ \lambda_{k}\right\} _{k=0}^{\infty}\subset\left[\varepsilon,1+\rho_{\left\{ U_{i}\right\} _{i=1}^{m}}-\varepsilon\right]$
for $\varepsilon>0$, the sequence $\left\{ y^{k}\right\} _{k=0}^{\infty}$,
generated by Algorithm \ref{supGDSA} with respect to the sequence
$\left\{ \lambda_{k}\right\} _{k=0}^{\infty}$, satisfies Statements
\textit{(i)} and \textit{(iii)} of Theorem \ref{main_res}. Indeed,
define a sequence $\left\{ \beta_{k}\right\} _{k=0}^{\infty}$ of
positive real numbers by $0<\beta_{k}:=\sum_{n=1}^{N_{k}}\beta_{k,n}$
for each $k\in\mathbb{N}$. Then, $\sum_{k=0}^{\infty}\beta_{k}<\infty$
and we have 
\begin{align}
y^{k+1} & =T_{\left(\varOmega_{k},\omega_{k}\right)\lambda_{k}}\left(y^{k}+\sum_{n=1}^{N_{k}}\beta_{k,n}v^{k,n}\right)=T_{\left(\varOmega_{k},\omega_{k}\right)\lambda_{k}}\left(y^{k}+\beta_{k}\sum_{n=1}^{N_{k}}\beta_{k,n}\beta_{k}^{-1}v^{k,n}\right).\label{eq:-7}
\end{align}
It follows, by the triangle inequality and \eqref{eq:-6} that 
\[
\left\Vert \sum_{n=1}^{N_{k}}\beta_{k,n}\beta_{k}^{-1}v^{k,n}\right\Vert \le\sum_{n=1}^{N_{k}}\beta_{k,n}\beta_{k}^{-1}=1,
\]
that is, the sequence $\left\{ \sum_{n=1}^{N_{k}}\beta_{k,n}\beta_{k}^{-1}v^{k,n}\right\} _{k=0}^{\infty}$
is bounded in $\mathcal{H}$. By using Theorem \ref{main_res}\textit{(iv)}
and \eqref{eq:-7}, we obtain the following corollary.
\begin{cor}
\label{basic cor}Let $y^{0}\in\mathcal{H}$. Assume that the sequence
$\left\{ T_{\left(\varOmega_{k},\omega_{k}\right)}\right\} _{k=0}^{\infty}$
is $\limsup$-admissible and $C\not=\emptyset$. Let $\left\{ \lambda_{k}\right\} _{k=0}^{\infty}$
be a sequence of real numbers such that $\lambda_{k}\in\left[\varepsilon,1+\rho_{\left\{ U_{i}\right\} _{i=1}^{m}}-\varepsilon\right]$
for each $k\in\mathbb{N}$, where $\varepsilon>0$. Then the sequence
$\left\{ y^{k}\right\} _{k=0}^{\infty}$ generated by Algorithm \ref{supGDSA}
converges weakly to a point $y\in C$. If, in addition, each $T\in I$
is approximately shrinking and the family $\left\{ C_{T}\right\} _{T\in I}$
is boundedly regular, then $\left\{ y^{k}\right\} _{k=0}^{\infty}$
converges strongly to a point $y\in C$.
\end{cor}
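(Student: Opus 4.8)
The plan is to recognize the superiorized recurrence of Algorithm \ref{supGDSA} as an instance of the perturbed iteration appearing in Definition \ref{def:resilient}, and then to invoke the bounded perturbations resilience already established in Theorem \ref{main_res}\textit{(iv)}. Most of the groundwork has been assembled in the discussion preceding the statement, so the argument reduces to verifying that the superiorization terms genuinely qualify as \emph{bounded perturbations} in the required sense.

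First I would set $\beta_k:=\sum_{n=1}^{N_k}\beta_{k,n}$ for each $k\in\mathbb{N}$. Since the $\beta_{k,n}$ are positive, each $\beta_k$ is a positive real number, and the hypothesis $\sum_{k=0}^{\infty}\sum_{n=1}^{N_k}\beta_{k,n}<\infty$ yields at once $\sum_{k=0}^{\infty}\beta_k<\infty$. Next I would put $v^k:=\beta_k^{-1}\sum_{n=1}^{N_k}\beta_{k,n}v^{k,n}$, so that $\beta_k v^k=\sum_{n=1}^{N_k}\beta_{k,n}v^{k,n}$ and, as recorded in \eqref{eq:-7}, the recurrence becomes $y^{k+1}=T_{\left(\varOmega_{k},\omega_{k}\right)\lambda_{k}}(y^k+\beta_k v^k)$. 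By the triangle inequality together with the normalization $\left\Vert v^{k,n}\right\Vert\le 1$ built into \eqref{eq:-6}, one obtains $\left\Vert v^k\right\Vert\le\beta_k^{-1}\sum_{n=1}^{N_k}\beta_{k,n}=1$, so $\left\{v^k\right\}_{k=0}^{\infty}$ is bounded in $\mathcal{H}$. Hence $\left\{\beta_k v^k\right\}_{k=0}^{\infty}$ is a sequence of bounded perturbations precisely in the sense of Definition \ref{def:resilient}.

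With this reformulation the conclusion follows directly. Under the standing hypotheses (namely $C\ne\emptyset$, $\limsup$-admissibility of $\left\{T_{\left(\varOmega_{k},\omega_{k}\right)}\right\}_{k=0}^{\infty}$, and $\lambda_k\in\left[\varepsilon,1+\rho_{\left\{U_i\right\}_{i=1}^{m}}-\varepsilon\right]$), Theorem \ref{main_res}\textit{(i)} asserts that the unperturbed Algorithm \ref{alg:GDSA} converges weakly to a point of $C$ for every initial point; therefore Theorem \ref{main_res}\textit{(iv)} guarantees that the perturbed iteration $y^{k+1}=T_{\left(\varOmega_{k},\omega_{k}\right)\lambda_{k}}(y^k+\beta_k v^k)$ converges weakly to a point $y\in C$. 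For the strong convergence assertion I would add the hypotheses that each $T\in I$ is approximately shrinking and that $\left\{C_T\right\}_{T\in I}$ is boundedly regular; these are exactly the conditions under which Theorem \ref{main_res}\textit{(iii)} upgrades the convergence of the unperturbed algorithm to strong convergence, whereupon the parenthetical \emph{(strong)} clause of Theorem \ref{main_res}\textit{(iv)} delivers strong convergence of $\left\{y^k\right\}_{k=0}^{\infty}$ to a point of $C$.

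There is no genuine analytic obstacle here, since the substantive work has been fully absorbed into Theorem \ref{main_res}; the only step demanding care is the \emph{packaging}, that is, collapsing the inner superiorization sum $\sum_{n=1}^{N_k}\beta_{k,n}v^{k,n}$ into a single bounded perturbation $\beta_k v^k$ and confirming that $\left\{v^k\right\}$ is bounded while $\left\{\beta_k\right\}$ is summable. I would stress one point that might otherwise seem troublesome: the perturbation directions $v^{k,n}$ depend on the running iterate $y^k$ through the subgradient selection in \eqref{eq:-6}, yet this causes no difficulty, because Definition \ref{def:resilient} and Theorem \ref{main_res}\textit{(iv)} require only boundedness of $\left\{v^k\right\}$ and summability of $\left\{\beta_k\right\}$, with no assumption that the perturbations be chosen independently of the iterates.
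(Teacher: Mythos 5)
Your proposal is correct and follows essentially the same route as the paper: the discussion preceding Corollary \ref{basic cor} likewise sets $\beta_{k}:=\sum_{n=1}^{N_{k}}\beta_{k,n}$, rewrites the recurrence as in \eqref{eq:-7}, bounds $\bigl\Vert\sum_{n=1}^{N_{k}}\beta_{k,n}\beta_{k}^{-1}v^{k,n}\bigr\Vert\le1$ via the triangle inequality and the normalization in \eqref{eq:-6}, and then applies Theorem \ref{main_res}\textit{(iv)}. Your added remark that the iterate-dependence of the perturbation directions is harmless is a correct and worthwhile observation, though the paper does not spell it out.
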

It turns out that if the sequence $\left\{ y^{k}\right\} _{k=0}^{\infty}$,
generated by Algorithm \ref{supGDSA}, converges strongly to a point
$y\in\mathcal{H}$, then the sequence $\left\{ y^{k}\right\} _{k=k_{0}}^{\infty}$
is strictly Fej{\'e}r monotone with respect to $C_{\mathrm{min}}$
for some $k_{0}\in\mathbb{N}$. In order to show this, we need the
following auxiliary lemma.

\begin{lem}
\label{aux_lem} For an arbitrary nonempty subset $C$ of $\mathcal{H}$
and $y,z\in C$ such that $z\in\mathrm{Argmin}_{x\in C}\phi\left(x\right)$
and $y\not\in\mathrm{Argmin}_{x\in C}\phi\left(x\right)$, there exist
real numbers $r_{1}>0$ and $r_{2}>0$ so that for each $\overline{y}\in B\left(y,r_{1}\right)$
and $v\in\partial\phi\left(\overline{y}\right)$, the following assertions
are satisfied:
\begin{enumerate}
\item $0\not\in\partial\phi\left(\overline{y}\right)$ and for each $\overline{z}\in B\left(z,r_{2}\right)$
\begin{equation}
\left\langle \left\Vert v\right\Vert ^{-1}v,\overline{z}-\overline{y}\right\rangle <0.\label{eq:-7-1}
\end{equation}
\item We have 
\begin{equation}
\left\langle \left\Vert v\right\Vert ^{-1}v,z-\overline{y}\right\rangle <-2^{-1}r_{2}.\label{eq:-16}
\end{equation}
\item Let $p$ be a nonnegative integer. Assume that $\left\{ \alpha_{n}\right\} _{n=1}^{p}$
is a sequence of positive real numbers such that $\sum_{n=1}^{p}a_{n}<2^{-1}r_{1}$
and $\left\{ v^{n}\right\} _{n=1}^{p}\subset\mathcal{H}\backslash\left\{ 0\right\} $
is a sequence such that $v^{n}\in\partial\phi\left(\overline{y}-\sum_{i=1}^{n-1}\alpha_{i}\left\Vert v^{i}\right\Vert ^{-1}v^{i}\right)$
for each $n=1,2,\dots,p$. If, in addition, $\overline{y}\in B\left(y,2^{-1}r_{1}\right)$,
then
\begin{equation}
\left\Vert \overline{y}-\sum_{n=1}^{p}\alpha_{n}\left\Vert v^{n}\right\Vert ^{-1}v^{n}-z\right\Vert ^{2}\le\left\Vert \overline{y}-z\right\Vert ^{2}-\sum_{n=1}^{p}\left(r_{2}-\alpha_{n}\right)\alpha_{n}\label{eq:-25}
\end{equation}
(by definition $\sum_{n=1}^{0}\alpha_{n}\left\Vert v^{n}\right\Vert ^{-1}v^{n}:=\sum_{n=1}^{0}\left(r_{2}-\alpha_{n}\right)\alpha_{n}:=0)$.
\end{enumerate}
\end{lem}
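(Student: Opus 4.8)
The plan is to establish the three assertions in order, since (ii) follows from (i), and (iii) is a telescoping consequence of the subgradient inequality combined with the estimate in (ii). The conceptual engine throughout is the defining property of the subdifferential \eqref{eq:-24}: for a convex $\phi$ and any $v\in\partial\phi(\overline{y})$, we have $\langle v, w-\overline{y}\rangle \le \phi(w)-\phi(\overline{y})$ for all $w$. Since $z$ is a minimizer of $\phi$ on $C$ and $y$ is not, we have $\phi(z)<\phi(y)$, and this strict gap is what will be propagated into the claimed strict inequalities.

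For \emph{(i)}, first note that since $y\notin\mathrm{Argmin}_{x\in C}\phi(x)$ we have $\phi(y)>\phi(z)$, and I would set $\delta:=\phi(y)-\phi(z)>0$. The subgradient inequality at $y$ (for any fixed $v_0\in\partial\phi(y)$) gives $\langle v_0, z-y\rangle \le \phi(z)-\phi(y)=-\delta<0$, so in particular $0\notin\partial\phi(y)$, since otherwise the zero subgradient would force $\phi(y)\le\phi(z)$. To get the conclusion for all nearby $\overline{y}$ and all $v\in\partial\phi(\overline{y})$, I would use the continuity of $\phi$ together with the local boundedness of the subdifferential of a continuous convex function (subgradients of a continuous convex function are bounded on bounded sets). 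Concretely, for $\overline{y}$ near $y$ the inequality $\langle v, z-\overline{y}\rangle \le \phi(z)-\phi(\overline{y})$ holds, and $\phi(z)-\phi(\overline{y})\to \phi(z)-\phi(y)=-\delta$ by continuity; so for $r_1$ small enough the right side is bounded above by $-\delta/2<0$, forcing $0\notin\partial\phi(\overline{y})$ and giving $\langle \|v\|^{-1}v, z-\overline{y}\rangle<0$ after dividing by $\|v\|$. To extend from $z$ to nearby $\overline{z}\in B(z,r_2)$, I would shrink $r_2$ so that the perturbation $\langle \|v\|^{-1}v, \overline{z}-z\rangle$, bounded in absolute value by $\|\overline{z}-z\|<r_2$ (Cauchy--Schwarz, unit vector), does not destroy the strict sign; here the uniform upper bound on $\langle \|v\|^{-1}v, z-\overline{y}\rangle$ coming from $\delta$ and the local bound on $\|v\|$ is exactly what makes the choice of $r_2$ uniform over $\overline{y}$ and $v$.

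For \emph{(ii)}, this is essentially a quantitative restatement: the argument in (i) already produces a \emph{uniform} negative upper bound on $\langle \|v\|^{-1}v, z-\overline{y}\rangle$, and I would simply name $r_2$ so that this bound equals $-2^{-1}r_2$; that is, I would choose $r_2$ from the start to satisfy \eqref{eq:-16}, and then verify in (i) that this same $r_2$ works for \eqref{eq:-7-1} after possibly shrinking it further. The key point is that (i) and (ii) must be proved with a single coordinated choice of $r_1,r_2$, so I would select $r_1$ first (to control $\phi(z)-\phi(\overline y)\le -\delta/2$ and the subgradient bound), then define $r_2:=\delta/(2B)$ or similar, where $B$ bounds $\|v\|$ near $y$, to get the clean form $-2^{-1}r_2$.

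For \emph{(iii)}, I would expand $\|\overline{y}-\sum_{n=1}^{p}\alpha_n\|v^n\|^{-1}v^n - z\|^2$ by induction on $p$, peeling off one term at a time. Writing $w^{n}:=\overline{y}-\sum_{i=1}^{n}\alpha_i\|v^i\|^{-1}v^i$ (so $w^0=\overline y$), the standard identity gives
\[
\|w^{n}-z\|^2=\|w^{n-1}-z\|^2-2\alpha_n\langle \|v^n\|^{-1}v^n, w^{n-1}-z\rangle+\alpha_n^2.
\]
The inner product term is handled by applying part (ii): I must check that each $w^{n-1}$ lies in $B(y,r_1)$ so that (ii) applies with $\overline{y}$ replaced by $w^{n-1}$ and $z$ fixed. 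This is where the hypothesis $\sum_{n=1}^p \alpha_n<2^{-1}r_1$ together with $\overline{y}\in B(y,2^{-1}r_1)$ is used: by the triangle inequality $\|w^{n-1}-y\|\le \|\overline y-y\|+\sum_{i=1}^{n-1}\alpha_i<2^{-1}r_1+2^{-1}r_1=r_1$, since each $\|v^i\|^{-1}v^i$ is a unit vector. Thus (ii) gives $\langle \|v^n\|^{-1}v^n, z-w^{n-1}\rangle<-2^{-1}r_2$, i.e. $-2\alpha_n\langle\cdot\rangle < -\alpha_n r_2$, yielding $\|w^n-z\|^2\le \|w^{n-1}-z\|^2 - r_2\alpha_n + \alpha_n^2 = \|w^{n-1}-z\|^2 - (r_2-\alpha_n)\alpha_n$. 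Summing the telescoping bound over $n=1,\dots,p$ gives \eqref{eq:-25}.

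\textbf{The main obstacle} I anticipate is the bookkeeping in part \emph{(iii)} to guarantee that every intermediate iterate $w^{n-1}$ stays inside the ball $B(y,r_1)$ where the sign condition from (ii) is valid; this is precisely why the radius of accumulated steps must be controlled by $2^{-1}r_1$ and the starting point by the other $2^{-1}r_1$. A secondary subtlety is that in (iii) the subgradient $v^n$ is evaluated at the \emph{moving} point $w^{n-1}$, not at the fixed $\overline y$, so part (ii) must genuinely be applied with $w^{n-1}$ in the role of $\overline y$ — this is legitimate exactly because (ii) was proved uniformly over all $\overline y\in B(y,r_1)$, which is the reason the uniform negative bound in (i)--(ii) was worth establishing rather than a pointwise one.
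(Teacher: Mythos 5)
Your proof is correct, and your part \emph{(iii)} coincides with the paper's argument: the same induction, peeling off one step at a time, with the hypotheses $\overline{y}\in B\left(y,2^{-1}r_{1}\right)$ and $\sum_{n=1}^{p}\alpha_{n}<2^{-1}r_{1}$ used exactly as you describe to keep every intermediate point inside $B\left(y,r_{1}\right)$ where \emph{(ii)} applies. Where you genuinely diverge is in how \emph{(i)} and \emph{(ii)} are obtained. You make the negativity quantitative from the start: setting $\delta:=\phi\left(y\right)-\phi\left(z\right)>0$, you use continuity to get $\phi\left(z\right)-\phi\left(\overline{y}\right)\le-\delta/2$ on $B\left(y,r_{1}\right)$, divide the subgradient inequality by $\left\Vert v\right\Vert $, and control $\left\Vert v\right\Vert $ from above via the local boundedness of the subdifferential of a continuous convex function; this produces an explicit $r_{2}$ (of the form $\delta/\left(2B\right)$) for which \eqref{eq:-16} holds, and \eqref{eq:-7-1} then follows by Cauchy--Schwarz since $\left\Vert \overline{z}-z\right\Vert <r_{2}$. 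The paper avoids the local-boundedness ingredient entirely: it chooses $r_{1},r_{2}$ jointly so that $\phi\left(\overline{y}\right)>\phi\left(\overline{z}\right)$ for all $\overline{y}\in B\left(y,r_{1}\right)$ and $\overline{z}\in B\left(z,r_{2}\right)$ (continuity at the two points only), obtains \eqref{eq:-7-1} directly from the subgradient inequality \eqref{eq:-24}, and then extracts the quantitative bound \eqref{eq:-16} by testing \eqref{eq:-7-1} at the particular point $\overline{z}:=z+2^{-1}r_{2}\left\Vert v\right\Vert ^{-1}v\in B\left(z,r_{2}\right)$. So your route invokes one additional (standard) fact but yields an explicit constant, whereas the paper's route is more elementary and keeps $r_{2}$ implicit; both are valid, and your remark that the final $r_{1},r_{2}$ must be fixed once so that \emph{(i)}--\emph{(iii)} hold simultaneously is handled correctly, since shrinking $r_{2}$ only weakens the requirement in \eqref{eq:-16} and in \eqref{eq:-7-1}.
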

\begin{proof}
Since $z\in\mathrm{Argmin}_{x\in C}\phi\left(x\right)$ and $y\not\in\mathrm{Argmin}_{x\in C}\phi\left(x\right)$,
we observe that $\phi\left(y\right)-\phi\left(z\right)>0$. By the
continuity of $\phi$, there exist $r_{1}>0$ and $r_{2}>0$ such
that
\begin{equation}
\phi\left(\overline{y}\right)-\phi\left(\overline{z}\right)>0,\label{eq:-15}
\end{equation}
for each $\overline{y}\in B\left(y,r_{1}\right)$ and $\overline{z}\in B\left(z,r_{2}\right)$.
Let $\overline{y}\in B\left(y,r_{1}\right)$ and $v\in\partial\phi\left(\overline{y}\right)$.

\textit{(i)} In view of \eqref{eq:-15} and \eqref{eq:-24}, we have
for each $\overline{z}\in B\left(z,r_{2}\right)$,
\[
\left\langle v,\overline{z}-\overline{y}\right\rangle <0.
\]
It follows that $v\not=0$ and \eqref{eq:-7-1} holds. Since $v$
is an arbitrary element of $\partial\phi\left(\overline{y}\right)$,
it follows that $0\not\in\partial\phi\left(\overline{y}\right)$.

\textit{(ii)} Set $\overline{z}:=z+2^{-1}r_{2}\left\Vert v\right\Vert ^{-1}v$.
Then $\overline{z}\in B\left(z,r_{2}\right)$ and by \eqref{eq:-7-1},
\[
\left\langle \left\Vert v\right\Vert ^{-1}v,z+2^{-1}r_{2}\left\Vert v\right\Vert ^{-1}v-\overline{y}\right\rangle =\left\langle \left\Vert v\right\Vert ^{-1}v,\overline{z}-\overline{y}\right\rangle <0
\]
and \eqref{eq:-16} follows.

\textit{(iii)} Assume that $\overline{y}\in B\left(y,2^{-1}r_{1}\right)$.
Since $\sum_{n=1}^{p-1}a_{n}<2^{-1}r_{1}$, we obtain 
\begin{equation}
\left(\overline{y}-\sum_{n=1}^{p-1}\alpha_{n}\left\Vert v^{n}\right\Vert ^{-1}v^{n}\right)\in B\left(y,r_{1}\right).\label{eq:-27}
\end{equation}
It is true that, 
\begin{equation}
v^{p}\in\partial\phi\left(\overline{y}-\sum_{n=1}^{p-1}\alpha_{n}\left\Vert v^{n}\right\Vert ^{-1}v^{n}\right).\label{eq:-28}
\end{equation}
The proof of \eqref{eq:-25} is by induction on $p$. Clearly, \eqref{eq:-25}
is true for $p=0$. Assume that $p>0$. Then by the induction hypothesis,
\eqref{eq:-27}, \eqref{eq:-28} and \textit{(ii)} above,
\begin{align*}
\left\Vert \overline{y}-\sum_{n=1}^{p}\alpha_{n}\left\Vert v^{n}\right\Vert ^{-1}v^{n}-z\right\Vert ^{2} & =\left\Vert \overline{y}-\sum_{n=1}^{p-1}\alpha_{n}\left\Vert v^{n}\right\Vert ^{-1}v^{n}-\alpha_{p}\left\Vert v^{p}\right\Vert ^{-1}v^{p}-z\right\Vert ^{2}\\
 & =\left\Vert \overline{y}-\sum_{n=1}^{p-1}\alpha_{n}\left\Vert v^{n}\right\Vert ^{-1}v^{n}-z\right\Vert ^{2}\\
 & +2\alpha_{p}\left\langle \left\Vert v^{p}\right\Vert ^{-1}v^{p},z-\left(\overline{y}-\sum_{n=1}^{p-1}\alpha_{n}\left\Vert v^{n}\right\Vert ^{-1}v^{n}\right)\right\rangle +\alpha_{p}^{2}\\
 & \le\left\Vert \overline{y}-z\right\Vert ^{2}-\sum_{n=1}^{p-1}\left(r_{2}-\alpha_{n}\right)\alpha_{n}-\alpha_{p}r_{2}+\alpha_{p}^{2}\\
 & =\left\Vert \overline{y}-z\right\Vert ^{2}-\sum_{n=1}^{p}\left(r_{2}-\alpha_{n}\right)\alpha_{n}.
\end{align*}
Lemma \ref{aux_lem} is now proved.
\end{proof}
The following ``theorem of alternatives'' provides a more general
analogue of Theorem 4.1 in \cite{CZ_sup} in the setting of our superiorized
version of the GDSA algorithm.
\begin{thm}
\label{Strict Fejer} Let $y^{0}\in\mathcal{H}$ and assume that the
sequence $\left\{ y^{k}\right\} _{k=0}^{\infty}$, generated by Algorithm
\ref{supGDSA} with respect to the sequence $\left\{ \lambda_{k}\right\} _{k=0}^{\infty}\subset\left[\varepsilon,1+\rho_{\left\{ U_{i}\right\} _{i=1}^{m}}-\varepsilon\right]$,
where $\varepsilon>0$, converges strongly to a point $y\in C$. Then
exactly one of the following two alternatives holds:
\begin{enumerate}
\item $y\in C_{\mathrm{min}}$.

\suspend{enumerate} ~~or \resume{enumerate}
\item $y\not\in C_{\mathrm{min}}$ and there exists $k_{0}\in\mathbb{N}$
such that $\left\{ y^{k}\right\} _{k=k_{0}}^{\infty}$ is strictly
Fej{\'e}r monotone with respect to $C_{\mathrm{min}}$. Namely, there
exists a sequence $\left\{ u_{k}\right\} _{k=k_{0}}^{\infty}$ of
positive real numbers such that $\left\Vert y^{k+1}-z\right\Vert ^{2}\le\left\Vert y^{k}-z\right\Vert ^{2}-u_{k}$
for every $z\in C_{\mathrm{min}}$ and for all natural $k\ge k_{0}$.
\end{enumerate}
\end{thm}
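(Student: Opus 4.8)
The two alternatives are mutually exclusive by inspection, since (i) asserts $y\in C_{\mathrm{min}}$ while (ii) asserts $y\notin C_{\mathrm{min}}$; they are also exhaustive, so the entire content of the theorem is to establish (ii) under the hypothesis $y\notin C_{\mathrm{min}}$. Throughout I would abbreviate the inner iterates of Algorithm \ref{supGDSA} by $w^{k,n}:=y^{k}+\sum_{i=1}^{n}\beta_{k,i}v^{k,i}$, so that $w^{k,0}=y^{k}$, $w^{k,n+1}=w^{k,n}-\beta_{k,n+1}\left\Vert s^{k,n}\right\Vert ^{-1}s^{k,n}$ with $s^{k,n}\in\partial\phi\left(w^{k,n}\right)$ (whenever the first branch of \eqref{eq:-6} is taken), and, by \eqref{eq:-7}, $y^{k+1}=T_{\left(\varOmega_{k},\omega_{k}\right)\lambda_{k}}\left(w^{k,N_{k}}\right)$.

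Assume $y\notin C_{\mathrm{min}}$ and set $\delta:=\phi\left(y\right)-\min_{x\in C}\phi\left(x\right)>0$, noting that $\phi$ takes the constant value $\phi\left(y\right)-\delta$ on $C_{\mathrm{min}}$. The plan is to produce a descent estimate that is \emph{uniform} in $z\in C_{\mathrm{min}}$, which is exactly what a single ($z$-independent) decrement $u_{k}$ demands. First I would use the continuity of $\phi$ at $y$ to pick $r_{1}>0$ with $\phi\left(\overline{y}\right)>\phi\left(y\right)-\tfrac{1}{3}\delta$ for every $\overline{y}\in B\left(y,r_{1}\right)$, and (shrinking $r_{1}$ if necessary, using that a finite convex function continuous at $y$ is Lipschitz on a neighbourhood of $y$) a constant $L>0$ with $\left\Vert v\right\Vert \le L$ for all $\overline{y}\in B\left(y,r_{1}\right)$ and all $v\in\partial\phi\left(\overline{y}\right)$. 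The key step is then the uniform directional inequality: for every such $\overline{y},v$ and every $z\in C_{\mathrm{min}}$, the subgradient inequality \eqref{eq:-24} yields $\left\langle v,z-\overline{y}\right\rangle \le\phi\left(z\right)-\phi\left(\overline{y}\right)<\left(\phi\left(y\right)-\delta\right)-\left(\phi\left(y\right)-\tfrac{1}{3}\delta\right)=-\tfrac{2}{3}\delta$, whence $\left\langle \left\Vert v\right\Vert ^{-1}v,z-\overline{y}\right\rangle <-\tfrac{2\delta}{3L}=:-\mu<0$. This replaces, uniformly in $z$, the quantity $\tfrac{1}{2}r_{2}$ appearing in Lemma \ref{aux_lem}(ii).

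With $\mu$ in hand I would run the telescoping of Lemma \ref{aux_lem}(iii), now with $-\mu$ playing the role of $-\tfrac{1}{2}r_{2}$: expanding $\left\Vert w^{k,n}-z\right\Vert ^{2}=\left\Vert w^{k,n-1}-z\right\Vert ^{2}-2\beta_{k,n}\left\langle \left\Vert s^{k,n-1}\right\Vert ^{-1}s^{k,n-1},w^{k,n-1}-z\right\rangle +\beta_{k,n}^{2}$ and applying the displayed inequality at $\overline{y}=w^{k,n-1}$ gives, provided every $w^{k,n}\in B\left(y,r_{1}\right)$, the bound $\left\Vert w^{k,N_{k}}-z\right\Vert ^{2}\le\left\Vert y^{k}-z\right\Vert ^{2}-\sum_{n=1}^{N_{k}}\beta_{k,n}\left(2\mu-\beta_{k,n}\right)$. (The same inequality forces $s^{k,n-1}\ne0$, so the first branch of \eqref{eq:-6} is genuinely taken and the normalization is legitimate.) Since $C_{\mathrm{min}}\subset C=\mathrm{Fix}T_{\left(\varOmega_{k},\omega_{k}\right)\lambda_{k}}$ by Remark \ref{Relaxation has the same Fix}, and $T_{\left(\varOmega_{k},\omega_{k}\right)\lambda_{k}}$ is nonexpansive by Lemma \ref{lem:relax_output}, evaluating it at $w^{k,N_{k}}$ with the fixed point $z$ gives $\left\Vert y^{k+1}-z\right\Vert ^{2}\le\left\Vert w^{k,N_{k}}-z\right\Vert ^{2}$, so that with $u_{k}:=\sum_{n=1}^{N_{k}}\beta_{k,n}\left(2\mu-\beta_{k,n}\right)$ we obtain $\left\Vert y^{k+1}-z\right\Vert ^{2}\le\left\Vert y^{k}-z\right\Vert ^{2}-u_{k}$ for every $z\in C_{\mathrm{min}}$.

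It remains to fix $k_{0}$ so that the geometric side conditions hold and $u_{k}>0$ for all $k\ge k_{0}$. Because $y^{k}\to y$ strongly, and because $\sum_{k=0}^{\infty}\sum_{n=1}^{N_{k}}\beta_{k,n}<\infty$ forces both the tails $\sum_{n=1}^{N_{k}}\beta_{k,n}\to0$ and the individual terms $\beta_{k,n}\to0$, I would choose $k_{0}$ with $y^{k}\in B\left(y,\tfrac{1}{2}r_{1}\right)$ and $\sum_{n=1}^{N_{k}}\beta_{k,n}<\tfrac{1}{2}r_{1}$ (which together keep every $w^{k,n}$ inside $B\left(y,r_{1}\right)$) and with $\beta_{k,n}<2\mu$ for all $n\le N_{k}$ (which makes each summand of $u_{k}$ positive, hence $u_{k}>0$), for every $k\ge k_{0}$. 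This establishes alternative (ii). The main obstacle is precisely the $z$-uniformity of the decrement: a naive application of Lemma \ref{aux_lem} produces constants $r_{1},r_{2}$, and a decrement, that depend on the chosen minimizer $z$, whereas the theorem needs one $u_{k}$ valid for all $z\in C_{\mathrm{min}}$. This is overcome by exploiting that $\phi$ is constant on $C_{\mathrm{min}}$ and by localizing every estimate (both the value gap $\tfrac{2}{3}\delta$ and the subgradient bound $L$) near the single limit point $y$ rather than near the varying $z$.
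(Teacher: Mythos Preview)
Your argument is correct and follows the paper's three-stage skeleton (inner subgradient descent, outer nonexpansive step, choice of $k_{0}$), but the crucial descent estimate is obtained by a different and sharper mechanism. The paper invokes Lemma~\ref{aux_lem}, whose radii $r_{1},r_{2}$ are produced from continuity of $\phi$ near \emph{both} $y$ and the particular $z\in C_{\min}$ under consideration, and then sets $u_{k}=\sum_{n=1}^{N_{k}}(r_{2}-\beta_{k,n})\beta_{k,n}$ with that $r_{2}$. As written, this makes $r_{2}$, $k_{0}$ and $u_{k}$ depend on $z$, which is precisely the quantifier issue you identify in your final paragraph. Your fix is to localize everything at $y$ alone: bounding all subgradients on $B(y,r_{1})$ by a single constant $L$ via local Lipschitzness of a continuous convex function, and applying the subgradient inequality directly at $z$ (no ball around $z$ is needed) together with the constancy of $\phi$ on $C_{\min}$, you obtain $\langle\|v\|^{-1}v,\,z-\bar y\rangle<-\mu$ with $\mu=2\delta/(3L)$ uniformly over $z\in C_{\min}$. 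Your decrement $u_{k}=\sum_{n=1}^{N_{k}}\beta_{k,n}(2\mu-\beta_{k,n})$ is therefore genuinely $z$-independent, matching the quantifier order demanded by the statement. In short, your approach is more elementary (no auxiliary lemma) and actually closes a gap left open by the paper's proof.

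One small slip: the assertion $C=\mathrm{Fix}\,T_{(\Omega_{k},\omega_{k})\lambda_{k}}$ should read $C\subset\mathrm{Fix}\,T_{(\Omega_{k},\omega_{k})\lambda_{k}}$; only this inclusion is needed for the nonexpansive step, and the paper relies on the same inclusion at the same point.
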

\begin{proof}
Assume that $\left\{ y^{k}\right\} _{k=0}^{\infty}$ converges strongly
to a point $y\not\in C_{\mathrm{min}}$. Then, $y\in C$ by Corollary
\ref{basic cor} and $y\not\in\mathrm{Argmin}_{x\in C}\phi\left(x\right)$.
Assume that $z\in C_{\mathrm{min}}$. By Lemma \ref{aux_lem}, there
exist real numbers $r_{1}>0$ and $r_{2}>0$ such that each $\overline{y}\in B\left(y,r_{1}\right)$
and $v\in\partial\phi\left(\overline{y}\right)$ satisfy its assertions.
By using the strong convergence of $\left\{ y^{k}\right\} _{k=0}^{\infty}$
to $y$ and the convergence of the series $\sum_{k=0}^{\infty}\sum_{n=1}^{N_{k}}\beta_{k,n}$,
choose $k_{0}\in\mathbb{N}$ such that 
\begin{equation}
y^{k}\in B\left(y,2^{-1}r_{1}\right)\label{eq:-30}
\end{equation}
 and 
\begin{equation}
\sum_{n=1}^{N_{k}}\beta_{k,n}<\min\left\{ 2^{-1}r_{1},r_{2}\right\} \label{eq:-14}
\end{equation}
for each integer $k\ge k_{0}$. This yields, for each $k\ge k_{0}$,
\[
y^{k}+\sum_{i=1}^{n-1}\beta_{k,i}v^{k,i}\in B\left(y,r_{1}\right)
\]
for each $n=1,2,\dots,N_{k}$, and, consequently, by Lemma \ref{aux_lem}\textit{(i)},
\begin{equation}
0\not\in\partial\phi\left(y^{k}+\sum_{i=1}^{n-1}\beta_{k,i}v^{k,i}\right)\label{eq:-17}
\end{equation}
for each $n=1,2,\dots,N_{k}$. Let $k\ge k_{0}$ be an integer. By
\eqref{eq:-6} and \eqref{eq:-17}, 
\begin{equation}
v^{k,n}=-\left\Vert s^{k,n-1}\right\Vert ^{-1}s^{k,n-1},\label{eq:-29}
\end{equation}
where 
\begin{equation}
s^{k,n-1}\in\partial\phi\left(y^{k}+\sum_{i=1}^{n-1}\beta_{k,i}v^{k,i}\right),\label{eq:-26}
\end{equation}
for each $n=1,2,\dots,N_{k}$. Set $p:=N_{k}$ and $\overline{y}:=y^{k}$.
For each $n=1,2,\dots,p$, define $\alpha_{n}:=\beta_{k,n}>0$ and
$v^{n}:=s^{k,n-1}$. Then, by \eqref{eq:-30}, \eqref{eq:-14}, \eqref{eq:-17},
\eqref{eq:-29} and \eqref{eq:-26}, $\overline{y}\in B\left(y,2^{-1}r_{1}\right)$,
$\sum_{n=1}^{p}\alpha_{n}<2^{-1}r_{1}$, $\left\{ v^{n}\right\} _{n=1}^{p}\subset\mathcal{H}\backslash\left\{ 0\right\} $
and $v^{n}\in\partial\phi\left(\overline{y}-\sum_{i=1}^{n-1}\alpha_{i}\left\Vert v^{i}\right\Vert ^{-1}v^{i}\right)$
for each $n=1,2,\dots,p$. Since the operator $T_{\left(\varOmega_{k},\omega_{k}\right)\lambda_{k}}$,
the $\lambda_{k}$-relaxation of $T_{\left(\varOmega_{k},\omega_{k}\right)}$,
is nonexpansive, by Lemma \ref{lem:relax_output}, we obtain from
Lemma \ref{aux_lem}\textit{(iii),} that
\begin{align*}
\left\Vert y^{k+1}-z\right\Vert ^{2} & =\left\Vert T_{\left(\varOmega_{k},\omega_{k}\right)\lambda_{k}}\left(y^{k}+\sum_{n=1}^{N_{k}}\beta_{k,n}v^{k,n}\right)-z\right\Vert ^{2}\le\left\Vert y^{k}+\sum_{n=1}^{N_{k}}\beta_{k,n}v^{k,n}-z\right\Vert ^{2}\\
 & =\left\Vert \overline{y}-\sum_{n=1}^{p}\alpha_{n}\left\Vert v_{n}\right\Vert ^{-1}v_{n}-z\right\Vert ^{2}\le\left\Vert \overline{y}-z\right\Vert ^{2}-\sum_{n=1}^{p}\left(r_{2}-\alpha_{n}\right)\alpha_{n}\\
 & =\left\Vert y^{k}-z\right\Vert ^{2}-\sum_{n=1}^{N_{k}}\left(r_{2}-\beta_{k,n}\right)\beta_{k,n}.
\end{align*}
Now set $u_{k}:=\sum_{n=1}^{N_{k}}\left(r_{2}-\beta_{k,n}\right)\beta_{k,n}$
for each natural $k\ge k_{0}$. Then since $\sum_{n=1}^{N_{k}}\beta_{k,n}<r_{2}$,
by \eqref{eq:-14}, the result follows and the proof of the theorem
is complete.
\end{proof}
\begin{rem}
Note that we do not assume any admissibility condition on the sequence
of operators $\left\{ T_{\left(\varOmega_{k},\omega_{k}\right)}\right\} _{k=0}^{\infty}$
in Theorem \ref{Strict Fejer}.

Combining Theorem \ref{Strict Fejer} with Corollary \ref{basic cor},
we obtain the following corollary.
\begin{cor}
Let $y^{0}\in\mathcal{H}$ and assume that the sequence $\left\{ T_{\left(\varOmega_{k},\omega_{k}\right)}\right\} _{k=0}^{\infty}$
is $\limsup$-admissible. Suppose also that each $T\in I$ is approximately
shrinking and the family $\left\{ C_{T}\right\} _{T\in I}$ is boundedly
regular. Then the sequence $\left\{ y^{k}\right\} _{k=0}^{\infty}$
generated by Algorithm \ref{supGDSA} with respect to the sequence
$\left\{ \lambda_{k}\right\} _{k=0}^{\infty}\subset\left[\varepsilon,1+\rho_{\left\{ U_{i}\right\} _{i=1}^{m}}-\varepsilon\right]$,
where $\varepsilon>0$, converges strongly to a point $y\in C$ and
exactly one of the following two alternatives holds:
\begin{enumerate}
\item $y\in C_{\mathrm{min}}$

\suspend{enumerate} ~~or \resume{enumerate}
\item $y\not\in C_{\mathrm{min}}$ and there exists $k_{0}\in\mathbb{N}$
such that $\left\{ y^{k}\right\} _{k=k_{0}}^{\infty}$ is strictly
Fej{\'e}r monotone with respect to $C_{\mathrm{min}}$. Namely, there
exists a sequence $\left\{ u_{k}\right\} _{k=k_{0}}^{\infty}$ of
positive real numbers such that for every $z\in C_{\mathrm{min}}$,
$\left\Vert y^{k+1}-z\right\Vert ^{2}\le\left\Vert y^{k}-z\right\Vert ^{2}-u_{k}$
for all natural $k\ge k_{0}$.
\end{enumerate}
\end{cor}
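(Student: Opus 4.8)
The plan is to obtain this corollary as a direct synthesis of the two preceding results, Corollary~\ref{basic cor} and Theorem~\ref{Strict Fejer}; no genuinely new estimate is required, since all of the analytic work has already been carried out in proving those statements (indeed the remark immediately preceding the corollary announces precisely this combination). The argument therefore reduces to checking that the conclusion of the first result is exactly the hypothesis demanded by the second, and that the ambient assumptions line up.

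First I would verify that the hypotheses of Corollary~\ref{basic cor} are all in force. That corollary requires $\limsup$-admissibility of $\left\{T_{\left(\varOmega_{k},\omega_{k}\right)}\right\}_{k=0}^{\infty}$, which is assumed here, together with the condition $C\neq\emptyset$. The nonemptiness of $C$ is not stated explicitly among the present hypotheses, but it is supplied automatically by the bounded-regularity assumption: according to Definition~\ref{bound_reg}, the very phrase ``the family $\left\{C_{T}\right\}_{T\in I}$ is boundedly regular'' is defined only for families of nonempty, closed and convex sets whose intersection $C=\cap_{T\in I}C_{T}$ is nonempty. Hence $C\neq\emptyset$ is built into the boundedly-regular hypothesis, and every assumption of Corollary~\ref{basic cor} is met. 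Invoking its ``strong'' conclusion, which additionally exploits that each $T\in I$ is approximately shrinking and that $\left\{C_{T}\right\}_{T\in I}$ is boundedly regular, I would deduce that the sequence $\left\{y^{k}\right\}_{k=0}^{\infty}$ generated by Algorithm~\ref{supGDSA} with respect to $\left\{\lambda_{k}\right\}_{k=0}^{\infty}\subset\left[\varepsilon,1+\rho_{\left\{U_{i}\right\}_{i=1}^{m}}-\varepsilon\right]$ converges strongly to some point $y\in C$.

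With this strong convergence of $\left\{y^{k}\right\}_{k=0}^{\infty}$ to $y\in C$ in hand, I would apply Theorem~\ref{Strict Fejer} verbatim; note that, by the remark preceding this corollary, Theorem~\ref{Strict Fejer} imposes no admissibility condition and needs only strong convergence to a point of $C$, so its single hypothesis is now exactly the output of the previous step. Theorem~\ref{Strict Fejer} then yields the mutually exclusive dichotomy: either $y\in C_{\mathrm{min}}$, or $y\notin C_{\mathrm{min}}$ and there exist $k_{0}\in\mathbb{N}$ and positive reals $\left\{u_{k}\right\}_{k=k_{0}}^{\infty}$ with $\left\Vert y^{k+1}-z\right\Vert^{2}\le\left\Vert y^{k}-z\right\Vert^{2}-u_{k}$ for every $z\in C_{\mathrm{min}}$ and all $k\ge k_{0}$, i.e.\ strict Fej\'er monotonicity of the tail with respect to $C_{\mathrm{min}}$. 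The exclusivity of the alternatives is already part of Theorem~\ref{Strict Fejer}, so nothing further is needed. The ``main obstacle,'' such as it is, is purely the bookkeeping observation that the strong-convergence conclusion of Corollary~\ref{basic cor} coincides with the standing hypothesis of Theorem~\ref{Strict Fejer}, together with the recognition that $C\neq\emptyset$ comes for free from bounded regularity; once these two points are noted, the two results chain together and the proof is complete.
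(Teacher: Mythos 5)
Your proposal is correct and follows exactly the route the paper intends: the paper offers no separate proof of this corollary, stating only that it is obtained by ``Combining Theorem \ref{Strict Fejer} with Corollary \ref{basic cor},'' which is precisely the chaining you carry out. Your side remark that $C\neq\emptyset$ is implicit in the bounded-regularity hypothesis (via Definition \ref{bound_reg}) is a sound and worthwhile observation, but it does not alter the argument.
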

\end{rem}

\section{\label{sec:Conclusion}Conclusion}

In this paper we proposed and investigated a General Dynamic String-Averaging
(GDSA) iterative scheme in the inconsistent case. The main tool is
the property called ``strong coherence'' which serves as a sufficient
condition for convergence of iterative schemes governed by infinite
sequences of operators. The GDSA algorithm is bounded perturbation
resilient and, as such, we applied to it the superiorization methodology
and derived for the superiorized version of the GDSA algorithm a ``theorem
of alternatives'' proving strict Fej{\'e}r monotonicity with respect
to the minimum set of the underlying constrained minimization problem
data.

\subsubsection*{Data availability}

No data was used for the research described in the article.

\subsubsection*{Acknowledgments}

We thank Dr. Niklas Wahl and Tobias Becher from the German Cancer
Research Center (DKFZ) in Heidelberg, Germany, for enlightening conversations
about the feasibility-seeking model in radiation therapy treatment
planning which initiated our work on this paper. We are grateful to
two anonymous referees for valuable comments  that helped in improving
our work. This work is supported by U.S. National Institutes of Health
(NIH) Grant Number R01CA266467 and by the Cooperation Program in Cancer
Research of the German Cancer Research Center (DKFZ) and Israel's
Ministry of Innovation, Science and Technology (MOST).

\bibliographystyle{JOTA_style}
\bibliography{bank_of_references}

\end{document}